%%%%%%%%%%%%%%% Version January 3, 2010 %%%%%%%%%%%%%%

\documentclass[a4paper,11pt]{amsart}
\usepackage{amsmath}
\usepackage{amssymb}
\usepackage{amsthm}
\usepackage[all]{xy}
\usepackage[latin1]{inputenc}        % accents
\usepackage[dvips]{graphics}
\usepackage[dvips]{graphicx}
\usepackage{amscd}
\usepackage{mathrsfs}
\usepackage[mathcal]{eucal}
\usepackage{fullpage}
\usepackage{float}
\author{Francesco Polizzi}
\address{Dipartimento di Matematica, Università della Calabria, Via Pietro Bucci,
87036 Arcavacata di Rende (CS), Italy.}
\email{polizzi@mat.unical.it}
\title[Numerical properties of isotrivial fibrations]{Numerical properties of
isotrivial fibrations}
%\date{\today}

\newtheorem{inizio}{Lemma}[section]
\newtheorem{theorem}[inizio]{Theorem}
\newtheorem{corollary}[inizio]{Corollary}
\newtheorem{proposition}[inizio]{Proposition}
\newtheorem{remark}[inizio]{Remark}
\newtheorem{lemma}[inizio]{Lemma}

\newtheorem{definition}[inizio]{Definition}

\newtheorem{problem}[inizio]{Problem}

\newtheorem*{teoA}{Theorem A}
\newtheorem*{teoB}{Theorem B}

\newtheorem*{corollary-s}{Corollary C}
\theoremstyle{definition}
\newtheorem{example}[inizio]{Example}

\newcommand{\lr}{\longrightarrow}

\newcommand{\mO}{\mathcal{O}}
\newcommand{\mZ}{\mathbb{Z}}
\newcommand{\mE}{\mathsf{E}}

\newcommand{\St}{\left( -2+ \sum_{i=1}^r \left(1- \frac{1}{ \;n_i} \right) \right)}

\newcommand{\cF}{\mathfrak{c}(F)}
\newcommand{\type}{\big( \frac{q_1}{n_1}, \ldots,
\frac{q_r}{n_r}  \big)}

\newcommand{\mg}{\mathfrak{g}}

\newcommand{\si}[2]{ \frac{1}{#1} (1,#2)}

\newcommand{\ssi}[6]{
\hline
$\si{#1}{#2}$ & $#3$ & $\si{#1}{#4}$  & $#5$ & $#6$ \\
}

%\newenvironment{rcase}
%{\left. \begin{aligned}   
%\end{aligned}}

\setcounter{section}{-1}

\begin{document}

\subjclass{14J99, 14J29} \keywords{Isotrivial fibrations, cyclic
quotient singularities}

%\maketitle

\abstract In this paper we investigate the numerical properties of
relatively minimal isotrivial fibrations $\varphi \colon X \lr C$,
where $X$ is a smooth, projective surface and $C$ is a curve. In
particular we prove that, if $g(C) \geq 1$ and $X$ is neither ruled
nor isomorphic to a quasi-bundle, then $K_X^2 \leq 8 \chi(\mO_X)-2$;
this inequality is sharp and if equality holds then $X$ is a minimal
surface of general type whose canonical model has precisely two
ordinary double points as singularities. Under the further
assumption that $K_X$ is ample, we obtain $K_X^2 \leq 8
\chi(\mO_X)-5$ and the inequality is also sharp. This improves
previous results of Serrano and Tan.

\endabstract

\maketitle

\section{Introduction}

One of the most useful tools in the study of algebraic surfaces is the
analysis of \emph{fibrations}, that is morphisms
 with connected fibres from a surface $X$
onto a curve $C$. When all smooth fibres of a fibration $\varphi
\colon X \lr C$ are isomorphic to each other, we call $\varphi$
 an \emph{isotrivial fibration}.
As far as we know, there is hitherto no systematic study of
minimal models of isotrivial fibrations; the aim of the
present paper is to shed
some light on this problem. \\
\indent A smooth, projective surface $S$ is called a \emph{standard
isotrivial fibration} if there exists a finite group $G$, acting
faithfully on two smooth projective curves $C_1$ and $C_2$, so that
$S$ is isomorphic to the minimal desingularization of $T:=(C_1
\times C_2)/G$, where $G$ acts diagonally on the product. When the
action of $G$ is free, then $S=T$ is called a \emph{quasi-bundle}.
These surfaces have been investigated in \cite{Se90}, \cite{Se96},
\cite{Ca00}, \cite{BaCaGr08}, \cite{Pol07}, \cite{CarPol07},
\cite{MiPol08}, \cite{BaCaGrPi08}. A monodromy argument shows that
every isotrivial fibration $\varphi \colon X \lr C$ is birationally
isomorphic to a standard one $($\cite[Section $2$]{Se96}$)$; this
means that there exist $T=(C_1 \times C_2)/G$, a birational map $T
\dashrightarrow X$
 and an isomorphism $C_2/G \lr C$ such that the diagram
\begin{equation*}
\xymatrix{
T \ar@{-->}[r]  \ar[d] &
 X \ar[d]^{\varphi} \\
 C_2/G \ar[r]^{\cong} & C}
\end{equation*}
commutes.\\
\indent If $\lambda \colon S \lr T=(C_1 \times C_2)/G$ is any
standard isotrivial fibration, the two projections $\pi_1 \colon C_1
\times C_2 \lr C_1$, $\pi_2 \colon C_1 \times C_2 \lr C_2$ induce
two morphisms $\alpha_1 \colon S \lr C_1/G$, $\alpha_2 \colon S \lr
C_2/G$,
 whose smooth fibres are isomorphic to $C_2$ and $C_1$, respectively.
 Moreover $q(S)=g(C_1/G)
 +g(C_2/G)$. If $S$ is a quasi-bundle,
 then all singular fibres of $\alpha_1$ and $\alpha_2$ are
 multiple of smooth curves. Otherwise, $T$ contains some cyclic quotient
 singularities, and the invariants $K_S^2$ and $e(S)$ can be computed
 in terms of the number and
 type of such singularities.
Moreover  the corresponding fibres of $\alpha_1$ and
 $\alpha_2$ consist of an irreducible curve, called the central component,
 with at least two Hirzebruch-Jung strings attached. Assume that a fibre $F$
 of $\alpha_1$ (or $\alpha_2$) contains
 exactly $r$ such strings, of type $\frac{1}{n_1}(1, \, q_1), \ldots,
\frac{1}{n_r}(1, \, q_r)$, respectively; therefore
we say that $F$ is of type $\type$. \\
\indent Now set $\mg:=g(C_1)$ and consider a reducible fibre $F$ of $\alpha_2 \colon S \lr C_2/G$.
If $g(C_1/G) =0$ then it may happen that the central component of
$F$ is a $(-1)$-curve; in this case we say that $F$ is a
$(-1)$-\emph{fibre in genus} $\mg$. Moreover, if $g(C_2/G) \geq 1$
then the central
components of $(-1)$-fibres of $\alpha_2$ are the unique $(-1)$-curves on $S$. \\
\indent Our first result provides a method to construct standard
isotrivial fibrations with arbitrary many $(-1)$-fibres.
\begin{teoA}[see Theorem \ref{existence}]
Let $\mathcal{S}:=\big\{ \frac{q_1}{n_1}, \ldots, \frac{q_r}{n_r}
\big\} $ be a finite set of rational numbers, with $(n_i, q_i)=1$, such
that $\sum_{i=1}^r \frac{q_i}{n_i}=1$. Set $n:=\emph{l.c.m.}(n_1,
\ldots, n_r)$. Then for any $\mathfrak{q} \geq 0$ there exists a
standard isotrivial fibration $\lambda \colon S \lr T=(C_1 \times
C_2)/G$ such that the following holds.
\begin{itemize}
\item[$(i)$] $\emph{Sing}(T)=n \times \frac{1}{n_1}(1,q_1)+ \cdots +
 n \times \frac{1}{n_r}(1, q_r);$
\item[$(ii)$] the singular fibres of
 $\,\alpha_2 \colon S \lr C_2/G$ are exactly $\;n\;$ $(-1)$-fibres, all of type
 $\type;$
\item[$(iii)$] $q(S)=\mathfrak{q}$.
\end{itemize}
\end{teoA}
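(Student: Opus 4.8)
The plan is to realize the prescribed combinatorial data by a suitable abelian cover. I would set $G:=\mZ/n$ and, for $i=1,\ldots,r$, put $g_i:=(n/n_i)\,q_i\in G$. Since $(n_i,q_i)=1$ one has $\ord(g_i)=n_i$, while the hypothesis $\sum_{i=1}^r q_i/n_i=1$ gives $\sum_{i=1}^r g_i=n\cdot 1\equiv 0$ in $G$; moreover $g_1,\ldots,g_r$ generate $G$, because the subgroup they generate is cyclic and contains elements of orders $n_1,\ldots,n_r$, hence an element of order $n$. By the Riemann existence theorem there is then a connected smooth projective curve $C_1$ carrying a faithful $G$-action such that $C_1\lr C_1/G\cong\mathbb{P}^1$ is branched over $r$ points $y_1,\ldots,y_r$ with local monodromies $g_1,\ldots,g_r$. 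Independently, I would fix a smooth projective curve $B$ with $g(B)=\mathfrak{q}$ together with $n$ distinct points $p_1,\ldots,p_n\in B$, assign to each $p_j$ a fixed generator of $G$ as local monodromy (so that the monodromies sum to $n\equiv 0$), and apply the Riemann existence theorem once more to obtain a connected smooth projective curve $C_2$ with a faithful $G$-action such that $C_2\lr C_2/G\cong B$ is branched exactly over $p_1,\ldots,p_n$, with full ramification above each of them. Finally I would set $T:=(C_1\times C_2)/G$ with the diagonal action and let $\lambda\colon S\lr T$ be its minimal desingularization; this is a standard isotrivial fibration, and $g(C_1/G)=0$.

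To get $(i)$ and $(iii)$ I would argue as follows. A point $(x_1,x_2)\in C_1\times C_2$ has nontrivial stabilizer exactly when $x_1$ lies over some $y_i$ and $x_2$ over some $p_j$, in which case its stabilizer is $\langle g_i\rangle$. Since $G$ acts transitively on the $n/n_i$ points of $C_1$ over $y_i$, over each pair $(y_i,p_j)$ there is precisely one point of $T$, and the induced action of $\langle g_i\rangle\cong\mZ/n_i$ on the two tangent directions is $\mathrm{diag}(\zeta_{n_i},\zeta_{n_i}^{q_i})$: the first character is the monodromy $g_i$ along $C_1$, and the second comes from $g_i=(n/n_i)q_i$ acting on the tangent line to $C_2$ through a generator of $G$. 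Hence this point is a cyclic quotient singularity of type $\si{n_i}{q_i}$, and letting $i$ and $j$ range we obtain $\mathrm{Sing}(T)=n\times\si{n_1}{q_1}+\cdots+n\times\si{n_r}{q_r}$, which is $(i)$. As for $(iii)$, $q(S)=g(C_1/G)+g(C_2/G)=0+\mathfrak{q}=\mathfrak{q}$.

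For $(ii)$ I would study $\alpha_2\colon S\lr C_2/G=B$. Over a point of $B$ distinct from the $p_j$ the relevant stabilizer is trivial and the fibre is a smooth curve isomorphic to $C_1$; over $p_j$ the fibre $F_j$ is reducible, with central component $\tilde D_j$ equal to the strict transform of $C_1/G\cong\mathbb{P}^1$, to which are attached exactly the $r$ Hirzebruch-Jung strings resolving the singularities of $T$ over $(y_1,p_j),\ldots,(y_r,p_j)$; thus $F_j$ is of type $\type$, and these are the only singular fibres of $\alpha_2$, $n$ in number. It then remains to check that each $\tilde D_j$ is a $(-1)$-curve: it is a smooth rational curve, and writing $F_j=n\,\tilde D_j+(\text{string components})$ and using $F_j\cdot\tilde D_j=0$ together with the multiplicities with which the exceptional curves occur in the pull-back of $C_1/G$ through the Hirzebruch-Jung resolution of each $\si{n_i}{q_i}$, one finds $\tilde D_j^2=-\sum_{i=1}^r q_i/n_i=-1$. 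Therefore each $F_j$ is a $(-1)$-fibre in genus $\mg=g(C_1)$, and there are $n$ of them, which is $(ii)$.

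I expect the main difficulty to be precisely this last computation, namely verifying that the arithmetic identity $\sum_{i=1}^r q_i/n_i=1$ is exactly the condition forcing $\tilde D_j^2=-1$. Concretely, one must track, through the continued-fraction bookkeeping of the Hirzebruch-Jung resolutions, the coefficient with which the string curve adjacent to $\tilde D_j$ enters the fibre $F_j$, and verify that, after dividing by the multiplicity $n$ of $\tilde D_j$, these $r$ coefficients add up to $\sum_{i=1}^r q_i/n_i$. Everything else reduces to the Riemann existence theorem and to the standard local analysis of quotients of products of curves recalled in the previous sections.
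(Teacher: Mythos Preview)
Your proposal is correct and follows essentially the same approach as the paper: the same cyclic group $G=\mathbb{Z}/n$, the same local monodromies $g_i=(n/n_i)q_i$ for $C_1\to\mathbb{P}^1$, and the same construction of $C_2\to B$ totally ramified over $n$ points with monodromy a fixed generator. The only cosmetic differences are that the paper packages the rotation-constant computation into its Proposition~\ref{fixed-points} rather than arguing directly on tangent spaces as you do, and that the self-intersection formula $\tilde D_j^2=-\sum_i q_i/n_i$ you flag as the ``main difficulty'' is exactly Proposition~\ref{selfintersection}, already established earlier in the paper and invoked via Proposition~\ref{(-1)-fibre}.
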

Our second result deals with the ``geography" of (minimal models of)
isotrivial fibrations. It is straightforward to prove that every
quasi-bundle $S$ satisfies $K_S^2 = 8 \chi(\mO_S)$. In \cite{Se96}
Serrano extended this result, showing that any isotrivial fibred
surface $X$ satisfies $K_X^2 \leq 8 \chi(\mO_X)$; his proof is based
on the properties of the projective cotangent bundle
$\mathbb{P}(\Omega^1_X)$. Exploiting the fact that every isotrivial
fibration is birationally isomorphic to a standard one, we obtain
the following strengthening of Serrano's theorem. We want to
emphasize that our methods involves mostly arguments of
combinatorial nature, and it is very different from Serrano's one.
\begin{teoB}[see Theorem \ref{arbitrary-iso}]
Let $\varphi \colon X \lr C$ be any relatively minimal isotrivial fibration,
with $X$ non ruled and $g(C) \geq 1$. If
 $X$ is not isomorphic to a quasi-bundle, we have
\begin{equation} \label{u}
K_X^2 \leq 8 \chi(\mO_X)-2
\end{equation}
and if equality holds then $X$ is a minimal surface of general
type whose canonical model
has precisely two ordinary double points as singularities. \\
Moreover, under the further assumption that $K_X$ is ample, we
have
\begin{equation} \label{v}
K_X^2 \leq 8 \chi(\mO_X)-5.
\end{equation}
Finally, both inequalities \eqref{u} and \eqref{v} are sharp.
\end{teoB}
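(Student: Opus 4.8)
The plan is to reduce everything to the standard case, then run a careful combinatorial bookkeeping on the Hirzebruch--Jung strings. First, by the monodromy argument of \cite[Section 2]{Se96}, $\varphi\colon X\lr C$ is birationally isomorphic to a standard isotrivial fibration $\lambda\colon S\lr T=(C_1\times C_2)/G$ with $C\cong C_2/G$. Since $g(C)\geq 1$ and $X$ is not ruled, the relatively minimal model is obtained from $S$ by contracting exactly the central components of the $(-1)$-fibres of $\alpha_2$ (these are, as recalled in the introduction, the only $(-1)$-curves on $S$ when $g(C_2/G)\geq 1$), together with possibly the extra $(-1)$-curves created inside the modified Hirzebruch--Jung strings after each such contraction. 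So I would first write $K_S^2$ and $\chi(\mO_S)$ in terms of $\mg=g(C_1)$, $g(C_1/G)$, $g(C_2/G)$, $|G|$ and the local contributions $\sum_P$ of the cyclic quotient singularities of $T$ (these formulas are surely established earlier in the paper, in the section on the invariants $K_S^2$ and $e(S)$), and then track how $K^2$ changes under the chain of blow-downs leading to $X$.

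The heart of the argument is the inequality for a single reducible fibre. For a fibre $F$ of $\alpha_2$ of type $\type$ I would introduce the local invariant, call it $\cF$, measuring the defect $8\,(\text{local }\chi\text{-contribution})-(\text{local }K^2\text{-contribution})$ coming from $F$ after passing to the relatively minimal model; summing over all reducible fibres gives exactly $8\chi(\mO_X)-K_X^2=\sum_F \cF$ up to the quasi-bundle normalization $K^2=8\chi$. The key combinatorial lemma — and this is where the real work lies — is that $\cF\geq 0$ always, $\cF=0$ precisely for the fibres that come from a free quotient (no strings), and otherwise $\cF\geq 1$, with $\cF=1$ attainable only for one very restricted string configuration, namely the one producing, after contraction of the central $(-1)$-curve, a single $A_1$ singularity (an ordinary double point) on the canonical model; any genuinely singular configuration not of this shape forces $\cF\geq 2$. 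This is proved by an explicit analysis of Hirzebruch--Jung continued fractions: one writes the self-intersection numbers along each string, contracts, and estimates the resulting discrepancy, using $\sum q_i/n_i$-type constraints analogous to those in Theorem A. I expect this continued-fraction case analysis — showing $\cF\geq 1$ in general and classifying the boundary case $\cF=1$ — to be the main obstacle; it is elementary but delicate, and one must be careful that $X$ is non-ruled (so that it is not of the excluded quasi-bundle type and genuinely has $K_X^2$ controlled by general-type geography).

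Granting the lemma, \eqref{u} is immediate: if $X$ is not a quasi-bundle then at least one reducible fibre contributes, so $8\chi(\mO_X)-K_X^2=\sum_F\cF\geq 1$; and since the value $1$ is impossible — a single $A_1$ on the canonical model would force $K^2=8\chi-1$, but a parity or integrality argument (each $A_1$ on a surface with an isotrivial pencil comes in a $G$-orbit, hence in multiples, or else the minimal number of $(-1)$-fibres forces an even count) rules this out — we get $8\chi(\mO_X)-K_X^2\geq 2$. Equality $K_X^2=8\chi(\mO_X)-2$ then forces every reducible fibre except exactly two to be of free type and those two to be of the boundary configuration, so the canonical model has precisely two ordinary double points; that $X$ is then minimal of general type follows because $K_X^2>0$, $\chi(\mO_X)>0$ and $X$ non-ruled with a fibration of fibre genus $\geq 2$ (the case $g(C_1)\leq 1$ being separately seen to give quasi-bundles or ruled surfaces). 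For the ample case one notes that $K_X$ ample forbids the $(-2)$-curves coming from $A_1$ singularities on the canonical model, hence forbids the $\cF=1$ and the borderline $\cF=2$ configurations as well; rechecking the continued-fraction list shows the smallest admissible total defect jumps to $5$, giving \eqref{v}. Finally, sharpness of both \eqref{u} and \eqref{v} is obtained by feeding suitable sets $\mathcal{S}=\{q_i/n_i\}$ into Theorem A: for \eqref{u} choose two string-data realizing the $\cF=1$ configuration (so two nodes) and $\mathfrak{q}=g(C)\geq 1$; for \eqref{v} choose data realizing total defect exactly $5$ with no $(-2)$-curves surviving, then verify directly on the resulting surface that $K_X$ is ample.
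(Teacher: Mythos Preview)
Your overall architecture matches the paper: reduce to a standard isotrivial fibration, define a per-fibre defect $\delta(F)$ so that $K_{\widehat{S}}^2=8\chi(\mO_{\widehat{S}})-\sum_F\delta(F)$, and analyse $\delta(F)$ via Hirzebruch--Jung continued fractions. But several of your numerical claims are off, and the ample case has a genuine gap.

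First, the minimal positive defect is not $1$. A reducible fibre always carries at least \emph{two} HJ-strings (Theorem \ref{Serrano}), so the smallest possible contribution comes from a fibre of type $\big(\frac{1}{2},\frac{1}{2}\big)$, giving $\delta(F)=2$ and \emph{two} nodes on the canonical model --- not one. Correspondingly, in the equality case $K_X^2=8\chi(\mO_X)-2$ the paper shows there is \emph{one} reducible fibre of type $\big(\frac{1}{2},\frac{1}{2}\big)$ (hence $\textrm{Sing}(T)=2\times\frac{1}{2}(1,1)$ and $S=\widehat{S}$), not ``two boundary fibres each contributing one node''. Your parity heuristic is pointing at the right phenomenon (Corollary \ref{nodes}), but the mechanism is structural, not orbit-counting.

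The serious gap is in \eqref{v}. Your plan ``$K_X$ ample forbids $(-2)$-curves, so reread the continued-fraction list'' does not close the cases $8\chi-3$ and $8\chi-4$. After the full contraction, a $(-1)$-fibre of type $\big(\frac{2}{3},\frac{1}{6},\frac{1}{6}\big)$ (with $\delta(F)=3$) leaves two $(-3)$-curves, and one of type $\big(\frac{3}{4},\frac{1}{8},\frac{1}{8}\big)$ (with $\delta(F)=4$) leaves two $(-4)$-curves; in neither case is there a surviving $(-2)$-curve, so ampleness of $K_{\widehat{S}}$ is \emph{not} obstructed combinatorially. The paper rules these out by a completely different argument (Lemmas \ref{claim-3} and \ref{claim-4}): one lists the finite groups $G$ acting in genus $2$ (resp.\ $3$) with rational quotient and signature divisible by $6$ (resp.\ $8$), and then shows via generating-vector and conjugacy-class constraints (partly with \texttt{GAP4}) that no such $G$ admits a compatible action on $C_2$ producing exactly one reducible fibre of that type. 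Without this group-theoretic step, your argument for \eqref{v} does not go through.

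Finally, sharpness: Theorem \ref{existence} builds $(-1)$-fibres, but the equality case of \eqref{u} requires a $\big(\frac{1}{2},\frac{1}{2}\big)$ fibre whose central component is \emph{not} rational, which Theorem \ref{existence} does not supply. The paper instead cites explicit surfaces from \cite{Pol07} and \cite{MiPol08}.
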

We do not know whether Theorem B remains true
if one drops the assumption $g(C) \geq 1$. \\      \\
\indent Let us now illustrate the structure of the paper and give
a brief account of how the results are achieved.   \\
\indent In Section \ref{sec:prel} we review some of the standard
facts about group actions on Riemann surfaces and cyclic quotient
singularities; in particular we recall the Riemann existence theorem
and the Hirzebruch-Jung resolution in terms of continued fractions;
furthermore, we make some computations that will be
used in Section \ref{sec:relatively minimal}.\\
\indent In Section \ref{sec:standard-iso} we summarize the basic
properties of standard isotrivial fibrations. This section is
strongly inspired by Serrano's papers \cite{Se90} and \cite{Se96},
but our approach is different. In particular, we provide some
results on the singular locus of $T$ which one could not obtain by
means of Serrano's techniques (Corollaries \ref{integers} and \ref{nodes}).\\
\indent In Section \ref{sec:non-minimal} we start the analysis of
the case where $S$ is not a minimal surface. In particular we give
necessary and sufficient conditions ensuring that a reducible fibre
$F$ is a $(-1)$-fibre (Proposition \ref{(-1)-fibre}), and this
allows us to prove Theorem A. \\
\indent In Section \ref{sec:relatively minimal} we look more closely
at the relative minimal model $\hat{\alpha}_2 \colon
 \widehat{S} \lr C_2/G$ of $\alpha_2 \colon S \lr C_2/G$.
The main step is to define, for any reducible fibre $F$ of
$\alpha_2$, an invariant $\delta(F) \in \mathbb{Q}$ such that
\begin{equation} \label{aaaaa}
 K_{\widehat{S}}^2 = 8 \chi (\mO_{\widehat{S}})- \sum_{F \,
\textrm{reducible} } \delta(F).
\end{equation}
We also obtain a combinatorial classification of $(-1)$-fibres. When
$\mg=0$, the so-called Riemenschneider's duality between the
$HJ$-expansions of $\frac{n}{q}$ and $\frac{n}{n-q}$ implies
$\delta(F)=0$. If $\mg \geq 1$ one has instead $\delta(F)>2$ for all
reducible fibres $F$, with precisely three exceptions that we
describe in detail (Corollary \ref{fibres}). Using these facts,
together with relation \eqref{aaaaa} and some identities on
continued fractions shown in Section 1, we prove Theorem B. In
particular, the proof of inequality \eqref{v} uses the computer
algebra program \verb|GAP4|, whose database includes all groups of
order less that $2000$, with the exception of $1024$ (see
\cite{GAP4}). However, the computer can be replaced either by
(tedious) hand-made computations or by
the Atlas of Finite Groups (\cite{CCPW}). \\
\indent In Appendix A we classify all possible types of
$(-1)$-fibres for $\mg=1, \, 2, \, 3$; we also relate this
classification to those given by Kodaira (when
$\mg=1$) and Ogg (when $\mg=2$). \\
\indent Finally, in Appendix B we provide a list of all the cyclic
quotient singularities $\frac{1}{n}(1, \, q)$ and their numerical
invariants, for $ 2 \leq n \leq 14$. We hope that this will help
the reader to check our computations.

\bigskip
$\mathbf{Notations \; and \; conventions}$. All varieties in this
article are defined over $\mathbb{C}$. If $S$ is a projective,
non-singular surface $S$ then $K_S$ denotes the canonical class,
$p_g(S)=h^0(S, \; K_S)$ is the \emph{geometric genus},
$q(S)=h^1(S, \; K_S)$ is the \emph{irregularity} and
$\chi(\mathcal{O}_S)=1-q(S)+p_g(S)$ is the \emph{Euler
characteristic}. We denote by $\textrm{kod}(S)$ the Kodaira
dimension of $S$ and we say that $S$ is \emph{ruled} if
$\textrm{kod}(S)=-\infty$. For every finite group $G$,
the notation $G=G(|G|, \, \ast)$  indicates the label of $G$ in the
$\verb|GAP4|$ database of small groups. For instance, $D_4=G(8, \,
3)$ means that $D_4$ is the third in the list of groups of order
$8$. If $x \in G$ the conjugacy class of $x$ is denoted by $\textrm{Cl}(x)$.
If $x$ and $y$ are conjugate in $G$ we write $x \sim_G y$.
The commutator of $x$ and $y$  is defined as $[x, \, y]=xyx^{-1}y^{-1}$.
The derived subgroup of $G$ is denoted by $[G, \, G]$.
\bigskip

$\mathbf{Acknowledgements.}$ This research started
 when the author was visiting professor at the
University of Bayreuth (September-December 2007), supported by the
\emph{DFG Forschergruppe ``Klassifikation algebraischer Flächen und
kompakter komplexer Mannigfaltigkeiten"}. He wishes to thank I.
Bauer, F. Catanese, E. Mistretta and R. Pignatelli  for many
enlightening conversations and helpful suggestions. Moreover, he is
indebted to the organizers of the semester ``Groups in Algebraic
Geometry"  (especially F. Catanese and R. Pardini) and to the
``Centro Ennio de Giorgi" (University of Pisa, Italy) for the
invitation and hospitality during September 2008. Finally, he
expresses his gratitude to S. L. Tan for sending him the paper
\cite{Tan96}, which contains some results related to those obtained
in the present work.
\newpage

\section{Preliminaries} \label{sec:prel}

\subsection{Group actions on Riemann surfaces}

\begin{definition} \label{generating vect}
Let $G$ be a finite group and let
\begin{equation*}
\mathfrak{g}' \geq 0, \quad   m_r \geq m_{r-1} \geq \ldots \geq m_1
\geq 2
\end{equation*}
be integers. A \emph{generating vector} for $G$ of type
$(\mathfrak{g}' \; | \; m_1, \ldots ,m_r)$ is a $(2
\mathfrak{g}'+r)$-tuple of elements
\begin{equation*}
\mathcal{V}=\{g_1, \ldots, g_r; \; h_1, \ldots, h_{2\mathfrak{g}'}
\}
\end{equation*}
such that the following conditions are satisfied:
\begin{itemize}
\item the set $\mathcal{V}$ generates $G$;
\item the order of $g_i$ is equal to $m_i$;
\item $g_1g_2\cdots g_r \Pi_{i=1}^{\mathfrak{g}'} [h_i,h_{i+\mathfrak{g}'}]=1$.
\end{itemize}
If such a $\mathcal{V}$ exists, then $G$ is said to be
$(\mathfrak{g}' \; | \; m_1, \ldots ,m_r)$-\emph{generated}.
\end{definition}
\begin{remark} \label{abelian-gen}
If an \emph{abelian} group $G$ is $(\mathfrak{g}' \; | \; m_1,
\ldots ,m_r)$-generated then either $r=0$ or $r \geq 2$. Moreover if
$r=2$ then $m_1=m_2$.
\end{remark}
For convenience we make abbreviations such as $(4 \;| \; 2^3, 3^2)$
for $(4 \; | \; 2,2,2,3,3)$ when we write down the type of the
generating vector $\mathcal{V}$.
% Moreover we set $\mathbf{m}:=(m_1, \ldots, m_r)$.
\begin{proposition}[Riemann Existence Theorem] \label{riemann ext}
A finite group $G$ acts as a group of automorphisms of some
compact Riemann surface $C$ of genus $\mathfrak{g}$ if and only if
there exist integers $\mathfrak{g}' \geq 0$ and $m_r \geq m_{r-1}
\geq \ldots \geq m_1 \geq 2$ such that $G$ is $(\mathfrak{g}'\;
|\; m_1, \ldots, m_r)$-generated, with generating vector
$\mathcal{V}=\{g_1, \ldots, g_r; \; h_1, \ldots,
h_{2\mathfrak{g}'} \}$, and the Riemann-Hurwitz relation holds:
\begin{equation} \label{riemanhur}
2\mathfrak{g}-2=|G| \left(
2\mathfrak{g}'-2+\sum_{i=1}^r\bigg(1-\frac{1}{\;m_i} \bigg)
 \right).
\end{equation}
If this is the case, $\mathfrak{g}'$ is the genus of the quotient
Riemann surface $D:=C/G$ and the $G$-cover $C \lr D$ is branched in
$r$ points $P_1, \ldots, P_r$ with branching numbers $m_1, \ldots,
m_r$, respectively. In addition, the subgroups $\langle g_i \rangle$
and their conjugates provide all the nontrivial stabilizers of the
action of $G$ on $C$.
\end{proposition}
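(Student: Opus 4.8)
The plan is to prove the two implications by means of the classical dictionary between branched Galois covers of a compact Riemann surface and conjugacy classes of surjections of an orbifold fundamental group onto $G$.

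For the ``only if'' direction I would start from a faithful holomorphic $G$-action on $C$ and set $D := C/G$, a compact Riemann surface of some genus $\mathfrak{g}'$, with quotient map $\pi \colon C \lr D$. The branch locus $\{P_1, \ldots, P_r\} \subset D$ is finite; writing $D^{\ast} := D \setminus \{P_1, \ldots, P_r\}$ and $C^{\ast} := \pi^{-1}(D^{\ast})$, the restriction $\pi \colon C^{\ast} \lr D^{\ast}$ is an unramified Galois covering with deck group $G$, hence --- as $C$, and so $C^{\ast}$, is connected --- it is classified by a surjective homomorphism $\rho \colon \pi_1(D^{\ast}, x_0) \lr G$, determined up to conjugacy. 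Now $\pi_1(D^{\ast}, x_0)$ has the standard presentation on handle loops $a_1, b_1, \ldots, a_{\mathfrak{g}'}, b_{\mathfrak{g}'}$ and small loops $\gamma_1, \ldots, \gamma_r$ around the punctures, subject to the single relation $\gamma_1 \cdots \gamma_r \prod_{i=1}^{\mathfrak{g}'}[a_i,b_i] = 1$. Putting $g_i := \rho(\gamma_i)$, $h_i := \rho(a_i)$, $h_{i + \mathfrak{g}'} := \rho(b_i)$, surjectivity of $\rho$ says that $\mathcal{V} = \{g_1,\ldots,g_r;\,h_1,\ldots,h_{2\mathfrak{g}'}\}$ generates $G$, the relation gives the product-one condition, and $\langle g_i \rangle$ is the image of the local monodromy at $P_i$, i.e. (up to conjugacy, as the point of the fibre varies) the stabilizer of a point over $P_i$, so $\ord(g_i) = m_i$ is the branching number; points of $C$ over $D^{\ast}$ have trivial stabilizer. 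After reordering so that $m_r \geq \cdots \geq m_1 \geq 2$, Riemann--Hurwitz for $\pi$, with the ramification over $P_i$ contributing $|G|(1 - 1/m_i)$, is exactly \eqref{riemanhur}.

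For the ``if'' direction I would run this in reverse: fix $D$ of genus $\mathfrak{g}'$, choose $r$ distinct points $P_1, \ldots, P_r \in D$ and a base point in $D^{\ast}$, and note that $\gamma_i \mapsto g_i$, $a_i \mapsto h_i$, $b_i \mapsto h_{i+\mathfrak{g}'}$ respects the defining relation of $\pi_1(D^{\ast})$ precisely because of the product-one condition in the definition of a generating vector, hence defines a homomorphism $\rho \colon \pi_1(D^{\ast}, x_0) \lr G$, which is onto because $\mathcal{V}$ generates $G$. This $\rho$ yields a connected unramified $G$-covering $C^{\ast} \lr D^{\ast}$; by the analytic form of Riemann's existence theorem $C^{\ast}$ embeds as the complement of finitely many points in a compact Riemann surface $C$ carrying a holomorphic $G$-action extending the deck action, the covering extends to a finite holomorphic map $\pi \colon C \lr D = C/G$ branched exactly over the $P_i$, and the ramification index over $P_i$ equals the order of the local monodromy $g_i$, namely $m_i$ --- here one uses that $g_i$ has order \emph{exactly} $m_i$, so that no puncture is ``over-filled''. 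Then $C$ has genus $\mathfrak{g}$ by Riemann--Hurwitz, $\mathfrak{g}'$ is the genus of $C/G$, and the last assertion about stabilizers reads off from the identification of $\langle g_i \rangle$ with the local monodromy at $P_i$.

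I expect the only real obstacle to be the analytic step in the second direction --- completing the topological covering $C^{\ast} \lr D^{\ast}$ to a holomorphic branched covering of compact Riemann surfaces with the prescribed ramification. This is the genuinely non-formal content (``filling in the punctures''), and is exactly what the name ``Riemann Existence Theorem'' refers to; I would handle it by pulling back the complex structure of $D^{\ast}$ along the local homeomorphism $p$, then completing near each $P_i$ by adjoining finitely many points using a local model $z \mapsto z^{m_i}$ on a disc, or alternatively by invoking the equivalence between finite branched covers of a smooth projective curve and finite extensions of its function field. Everything else is bookkeeping: matching local monodromy subgroups with point stabilizers, and checking that the Euler-characteristic contributions assemble into the stated relation \eqref{riemanhur}. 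For complete details I would refer to the standard references on Riemann surfaces and their coverings.
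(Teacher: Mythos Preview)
Your sketch is the standard proof via monodromy of the punctured quotient and the analytic completion of covers, and it is correct in outline. Note, however, that the paper does \emph{not} supply its own proof of this proposition: it is stated as a classical result, and immediately after the statement the paper simply points the reader to \cite{Br90}, \cite{Bre00}, \cite{H71} and \cite{Pol07} for details. So there is no ``paper's own proof'' to compare against; what you have written is essentially the argument one finds in those references (in particular in Breuer's book), and would serve perfectly well as a self-contained justification if one were required.
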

In the situation of Proposition \ref{riemann ext} we shall say that
$G$ \emph{acts in genus} $\mathfrak{g}$ \emph{with signature}
$(\mathfrak{g}' \, | \, m_1, \ldots, m_r)$. We refer the reader to
[Br90, Section 2], [Bre00, Chapter 3],
\cite{H71} and [Pol07, Section 1] for more details. \\
Now let $C$ be a compact Riemann surface of genus $\mathfrak{g} \geq
2$ and let $G \subseteq \textrm{Aut}(C)$. For any $h \in G$ set
$H:=\langle h \rangle$ and define the set of fixed points of $h$ as
\begin{equation*}
\textrm{Fix}_C(h)=\textrm{Fix}_C(H):=\{x \in C \; |\; hx=x \}.
\end{equation*}
For our purposes it is also important to take into account how an
automorphism acts in a neighborhood of each of its fixed points. We
follow the exposition of \cite[pp.17, 38]{Bre00}. Let $\mathscr{D} \subset \mathbb{C}$
be the unit disk and $h \in \textrm{Aut}(C)$ of order $m >1$ such
that $hx =x$ for a point $x \in C$.
 Then there is a unique
 primitive complex $m$-th root of unity $\xi$ such that any lift of
 $h$ to $\mathscr{D}$ that fixes a point in $\mathscr{D}$ is
 conjugate to the transformation $z \lr \xi \cdot z$ in
 $\textrm{Aut}(\mathscr{D})$. We write $\xi_x(h)=\xi$ and we
 call $\xi^{-1}$ the \emph{rotation constant} of $h$ in $x$.
Then for each integer $ 1 \leq q \leq m-1$ such that $(m,\,q)=1$ we define
\begin{equation*}
\textrm{Fix}_{C,q}(h)=\{x \in \textrm{Fix}_C(h) \; | \;
\xi_x(h)=\xi^{q} \},
\end{equation*}
that is the set of fixed points of $h$ with rotation constant
$\xi^{-q}$. Clearly, we have
\begin{equation*}
\textrm{Fix}_C(h)=\biguplus_{\substack{ 1 \leq q \leq m-1 \\
(m, \, q)=1}}\textrm{Fix}_{C,q}(h).
\end{equation*}
\begin{proposition} \label{fixed-points}
Assuming that we are in the situation of Proposition
\emph{\ref{riemann ext}}, let $h \in G^{\times}$ be of order $m$,
$H=\langle h \rangle$ and $(m, \, q)=1$. Then
\begin{equation*} \label{formula-per-fix-tot}
|\emph{Fix}_{C}(h)|=|N_G(H)| \cdot \sum_{\substack{1 \leq i \leq r
\\ m|m_i
\\ H \; \sim_G \;  \langle g_i^{m_i/m} \rangle }} \frac{1}{\; m_i}
\end{equation*}
and
\begin{equation*} \label{formula-per-fix}
|\emph{Fix}_{C,q}(h)|=|C_G(h)| \cdot \sum_{\substack{1 \leq i \leq r
\\ m|m_i
\\ h \; \sim_G \;  g_i^{m_iq/m} }} \frac{1}{\; m_i} ~.
\end{equation*}
\end{proposition}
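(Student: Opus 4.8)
The plan is to compute both cardinalities fibre by fibre over the quotient map $\pi\colon C\to D=C/G$. By the last assertion of Proposition \ref{riemann ext}, every point of $C$ with nontrivial stabiliser lies over one of the branch points $P_1,\dots,P_r$; since $h$ has order $m>1$, it follows that $\textrm{Fix}_C(h)$ is the disjoint union of the sets $\textrm{Fix}_C(h)\cap\pi^{-1}(P_i)$, and similarly for each $\textrm{Fix}_{C,q}(h)$. So I would fix an index $i$ and study $\pi^{-1}(P_i)$. By the Riemann existence theorem and the construction behind it (see \cite[Ch.~3]{Bre00}, \cite{H71}) one may fix a preimage $\widetilde{P}_i$ of $P_i$ whose stabiliser is exactly $\langle g_i\rangle$ and at which $g_i$ acts as a ``canonical generator'', i.e.\ $\xi_{\widetilde{P}_i}(g_i)$ is the chosen primitive $m_i$-th root of unity. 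Then $\pi^{-1}(P_i)=\{\,g\widetilde{P}_i\mid g\langle g_i\rangle\in G/\langle g_i\rangle\,\}$, the stabiliser of $g\widetilde{P}_i$ is $g\langle g_i\rangle g^{-1}$, and since $G$ acts biholomorphically on $C$ one has $\xi_{g\widetilde{P}_i}\big((gg_ig^{-1})^k\big)=\xi_{\widetilde{P}_i}(g_i)^k$ for every $k\in\mathbb{Z}$.

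For the first formula, $g\widetilde{P}_i$ is fixed by $h$ if and only if $g^{-1}hg\in\langle g_i\rangle$; since $\ord(h)=m$ and $\langle g_i\rangle$ is cyclic of order $m_i$, this forces $m\mid m_i$ and is then equivalent to $g^{-1}\langle h\rangle g=\langle g_i^{m_i/m}\rangle$, the unique subgroup of order $m$ of $\langle g_i\rangle$. Writing $H=\langle h\rangle$: if $H\not\sim_G\langle g_i^{m_i/m}\rangle$ there is no fixed preimage over $P_i$; otherwise, choosing $g_0$ with $g_0^{-1}Hg_0=\langle g_i^{m_i/m}\rangle$, the solutions $g$ are exactly the elements of $N_G(H)g_0$. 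I would then observe that $g_0\langle g_i\rangle g_0^{-1}$ normalises $H$ (it is cyclic of order $m_i$ and contains $H$ as its unique subgroup of order $m$), which makes $N_G(H)g_0$ a union of left cosets of $\langle g_i\rangle$; hence the number of \emph{distinct} fixed preimages of $P_i$ equals $|N_G(H)g_0|/m_i=|N_G(H)|/m_i$. Summing over all $i$ with $m\mid m_i$ and $H\sim_G\langle g_i^{m_i/m}\rangle$ yields the first identity.

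For the second formula I would refine the above by the rotation constant. If $g\widetilde{P}_i$ is fixed by $h$, then $h=(gg_ig^{-1})^{(m_i/m)q'}$ for a unique $q'$ with $1\le q'\le m-1$ and $(q',m)=1$, and the identity from the first paragraph gives (for compatible choices of roots of unity) $\xi_{g\widetilde{P}_i}(h)=\xi_{\widetilde{P}_i}(g_i)^{(m_i/m)q'}=\xi^{q'}$, where $\xi$ is the chosen primitive $m$-th root of unity appearing in the definition of $\textrm{Fix}_{C,q}(h)$. Thus $g\widetilde{P}_i\in\textrm{Fix}_{C,q}(h)$ if and only if $q'=q$, i.e.\ $g^{-1}hg=g_i^{m_iq/m}$. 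If $h\not\sim_G g_i^{m_iq/m}$ there is nothing over $P_i$; otherwise, choosing $g_0$ with $g_0^{-1}hg_0=g_i^{m_iq/m}$, the solutions are $g\in C_G(h)g_0$, and since $g_0\langle g_i\rangle g_0^{-1}$ is abelian and contains $h$ it centralises $h$, so $C_G(h)g_0$ is again a union of left cosets of $\langle g_i\rangle$. Hence $P_i$ contributes $|C_G(h)|/m_i$ fixed points of rotation constant $\xi^{-q}$, and summing over the relevant $i$ proves the second identity. As a check, this is consistent with $\textrm{Fix}_C(h)=\biguplus_{q}\textrm{Fix}_{C,q}(h)$, since $|N_G(H)|=|C_G(h)|\cdot|\{q\mid(q,m)=1,\ h\sim_G h^q\}|$.

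I expect the main obstacle to lie in the local analytic bookkeeping rather than in the group theory: one must extract from the monodromy description behind the Riemann existence theorem the $G$-equivariance $\xi_{gx}(ghg^{-1})=\xi_x(h)$, the behaviour under powers $\xi_x(h^k)=\xi_x(h)^k$, and the normalisation making $g_i$ a canonical generator at $\widetilde{P}_i$, and one must choose the primitive roots of unity of the various orders compatibly so that $\xi_{\widetilde{P}_i}(g_i)^{m_i/m}$ is the chosen primitive $m$-th root. Once these local facts are pinned down, identifying which power of a generator of the stabiliser equals $h$ is immediate, and the passage from the number of solutions $g$ to the number of distinct preimages $g\widetilde{P}_i$ reduces to the elementary remark that $g_0\langle g_i\rangle g_0^{-1}$ normalises $H$, respectively centralises $h$.
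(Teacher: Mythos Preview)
The paper does not prove this proposition but simply cites \cite[Lemmas 10.4 and 11.5]{Bre00}; your argument is essentially the standard proof given there---counting fixed points fibre by fibre over the branch locus using the monodromy data, and passing from the set of conjugating elements to the set of distinct preimages via the observation that $g_0\langle g_i\rangle g_0^{-1}$ normalises $H$ (respectively centralises $h$). The argument is correct, including the consistency check $|N_G(H)|=|C_G(h)|\cdot|\{q:(q,m)=1,\ h\sim_G h^q\}|$, which follows from $N_G(H)/C_G(h)\hookrightarrow\mathrm{Aut}(H)$.
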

\begin{proof}
See \cite[Lemma 10.4 and 11.5]{Bre00}.
\end{proof}

\begin{corollary} \label{cor:fix}
Assume that $h \; \sim_G \; h^q$. Then
$|\emph{Fix}_{C,1}(h)|=|\emph{Fix}_{C,q}(h)|$.
\end{corollary}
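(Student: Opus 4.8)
The plan is to read off both sides from the explicit formula in Proposition~\ref{fixed-points}. Write $m=\ord(h)$; since $h$ and $h^{q}$ are conjugate they have the same order, so $(m,q)=1$ automatically and the second formula of Proposition~\ref{fixed-points} applies both to $q$ and to $1$. As the factor $|C_G(h)|$ occurring there does not depend on $q$, I only have to show that the hypothesis $h\sim_G h^{q}$ forces the two index sets
\begin{align*}
I_{1} & =\{\, i \ :\ 1\le i\le r,\ m\mid m_{i},\ h\sim_G g_{i}^{m_{i}/m}\,\},\\
I_{q} & =\{\, i \ :\ 1\le i\le r,\ m\mid m_{i},\ h\sim_G g_{i}^{m_{i}q/m}\,\}
\end{align*}
to coincide, for then $|\textrm{Fix}_{C,1}(h)|=|C_G(h)|\sum_{i\in I_{1}}1/m_{i}=|C_G(h)|\sum_{i\in I_{q}}1/m_{i}=|\textrm{Fix}_{C,q}(h)|$.

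The inclusion $I_{1}\subseteq I_{q}$ is easy: from $h\sim_G g_{i}^{m_{i}/m}$ one gets $h^{q}\sim_G g_{i}^{m_{i}q/m}$ by raising to the $q$-th power, and combining with $h\sim_G h^{q}$ gives $h\sim_G g_{i}^{m_{i}q/m}$. For the reverse inclusion I would first record an auxiliary fact: if $a\in G$ realises $aha^{-1}=h^{q}$, then $a^{j}ha^{-j}=h^{q^{j}}$ for all $j\ge 1$, so $h\sim_G h^{q^{j}}$ for every $j$; since the cyclic subgroup of $(\mZ/m\mZ)^{\times}$ generated by $q$ contains $q^{-1}$, choosing $j$ with $q^{j}\equiv q'\pmod m$ (where $qq'\equiv 1\pmod m$) yields $h\sim_G h^{q'}$. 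Granting this, if $h\sim_G g_{i}^{m_{i}q/m}$ then raising to the $q'$-th power and using $g_{i}^{m_{i}}=1$ gives $h^{q'}\sim_G g_{i}^{m_{i}qq'/m}=g_{i}^{m_{i}/m}$, whence $h\sim_G g_{i}^{m_{i}/m}$ by the auxiliary fact. Thus $I_{1}=I_{q}$ and the corollary follows.

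I do not expect a genuine obstacle here: everything is elementary once Proposition~\ref{fixed-points} is available, the only point that is not completely automatic being the inclusion $I_{q}\subseteq I_{1}$ (conjugacy of $q$-th powers does not, by itself, give conjugacy of the elements), which is exactly what the observation $h\sim_G h^{q'}$ repairs. One could instead give a direct geometric argument: pick $a$ with $aha^{-1}=h^{q}$, note that then $a^{-1}ha=h^{q'}$, and check that $a^{-1}$ permutes $\textrm{Fix}_C(h)$ while transforming the rotation constant of $h$ at each fixed point by raising it to the $q$-th power, so that it restricts to a bijection $\textrm{Fix}_{C,1}(h)\to\textrm{Fix}_{C,q}(h)$; but the combinatorial route above seems shorter.
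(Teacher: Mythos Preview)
Your argument is correct. The paper gives no proof for this corollary, evidently regarding it as immediate from Proposition~\ref{fixed-points}; your proposal fills in precisely the details one would expect, namely the equality $I_1=I_q$ of index sets, and your handling of the non-automatic inclusion $I_q\subseteq I_1$ via the observation $h\sim_G h^{q'}$ is the right move.
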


\subsection{Surface cyclic quotient singularities and
Hirzebruch-Jung  resolutions} \label{HJ-res} Let $n$ and $q$ be
coprime natural numbers with $1 \leq q \leq n-1$,
 and let $\xi_n$  be a primitive $n$th root of unity.
Let us consider the action of the cyclic group $\mathbb{Z}_n=\langle
\xi_n \rangle$ on $\mathbb{C}^2$  defined by $\xi_n \cdot
(x,y)=(\xi_nx, \xi_n^qy)$. Then the analytic space
$X_{n,q}=\mathbb{C}^2 / \mathbb{Z}_n$ contains a cyclic quotient
singularity of type $\frac{1}{n}(1,q)$. Denoting by $q'$ the unique
integer $1 \leq q' \leq n-1$ such that $qq' \equiv 1$ (mod $n$), we
have $X_{n_1,q_1} \cong X_{n, q}$ if and only if $n_1=n$ and either
$q_1=q$ or $q_1=q'$. The exceptional divisor on the minimal
resolution $\tilde{X}_{n,q}$ of $X_{n,q}$ is a $HJ$-string
 (abbreviation of Hirzebruch-Jung string), that is to say, a
 connected union $\mE=\bigcup_{i=1}^k Z_i$ of smooth rational curves $Z_1, \ldots, Z_k$ with
 self-intersection $\leq -2$, and ordered linearly so that $Z_i
 Z_{i+1}=1$ for all $i$, and $Z_iZ_j=0$ if $|i-j| \geq 2$.
More precisely, given the continued fraction
\begin{equation} \label{cont-frac}
\frac{n}{q}=[b_1,\ldots,b_k]=b_1-
                                \cfrac{1}{b_2 -\cfrac{1}{\dotsb
                                 - \cfrac{1}{\,b_k}}}, \quad b_i\geq 2 ~,
\end{equation}

the dual graph of $\mE$ is  {\setlength{\unitlength}{1.1cm}
\begin{center}
\begin{picture}(1,0.5)
\put(0,0){\circle*{0.2}} \put(1,0){\circle*{0.2}}
\put(0,0){\line(1,0){1}} \put(-0.3,0.2){\scriptsize $-b_1$}
\put(0.70,0.2){\scriptsize $-b_2$} \put(2,0){\circle*{0.2}}
\put(1,0){\line(1,0){0.2}} \put(1.3,0){\line(1,0){0.15}}
\put(1.55,0){\line(1,0){0.15}} \put(1.8,0){\line(1,0){0.2}}
\put(3,0){\circle*{0.2}} \put(2,0){\line(1,0){1}}
\put(1.70,0.2){\scriptsize $-b_{k-1}$} \put(2.70,0.2){\scriptsize
$-b_k$}
\end{picture}         \hspace{2.5cm}
\end{center}
}

\vspace{.5cm}

\noindent (see \cite[Chapter II]{Lau71}).
Moreover
\begin{equation} \label{q-prime}
\frac{n}{q}=[b_1, \ldots, b_k] \quad \textrm{if and only if} \quad  \frac{n}{q'}=[b_k, \ldots,
b_1].
\end{equation}
In particular a rational double point of type $A_n$ corresponds to the cyclic
quotient singularity $\frac{1}{n+1}(1,n)$. A point of type
$\frac{1}{2}(1,1)$ is called an \emph{ordinary double point} or
a \emph{node}. For any $1 \leq s \leq
k$ set $\frac{n_s}{q_s}:=[b_1, \ldots, b_s]$; then $\big
\{\frac{n_s}{q_s} \big \}$ is called the sequence of
\emph{convergents} of the continued fraction \eqref{cont-frac}.
Its terms satisfy the recursive relation
\begin{equation} \label{eq:recursion}
\frac{n_s}{q_s}=\frac{b_s n_{s-1}-n_{s-2}}{b_s q_{s-1}-q_{s-2}},
\end{equation}
where $n_{-1}=0, \; n_0=1, \; q_{-1}=-1, \; q_0=0$ (see Appendix to \cite{OW77}).
\begin{proposition} \label{decreasing}
The sequence $\big\{ \frac{n_s}{q_s} \big\}$ is strictly decreasing,
in fact
\begin{equation} \label{eq:decreasing-1}
\frac{n_{s-1}}{q_{s-1}}-\frac{n_s}{q_s}=\frac{1}{q_{s-1}q_s}.
\end{equation}
Consequently, the sequence $\big\{ \frac{q_s}{n_s} \big\}$ is
strictly increasing, in fact
\begin{equation} \label{eq:decreasing-2}
\frac{q_s}{n_s}-\frac{q_{s-1}}{n_{s-1}}=\frac{1}{n_s n_{s-1}}.
\end{equation}
\end{proposition}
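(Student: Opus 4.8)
The plan is to prove \eqref{eq:decreasing-1} by induction on $s$ using the recursion \eqref{eq:recursion}, and then deduce everything else. First I would record the convention $n_{-1}=0$, $n_0=1$, $q_{-1}=-1$, $q_0=0$ and check the base case: for $s=1$ one has $\frac{n_1}{q_1}=[b_1]=b_1/1$, so $\frac{n_0}{q_0}-\frac{n_1}{q_1}$ is $\frac{1}{0}-\frac{b_1}{1}$ in the naive reading, which is slightly awkward; the clean way is instead to verify the \emph{cross-multiplied} identity
\begin{equation*}
n_{s-1}q_s - n_s q_{s-1} = 1 \qquad \text{for all } s\geq 0,
\end{equation*}
which for $s=0$ reads $n_{-1}q_0 - n_0 q_{-1} = 0\cdot 0 - 1\cdot(-1)=1$, and which is equivalent to \eqref{eq:decreasing-1} once we know $q_{s-1},q_s>0$. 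So the real content is this determinant-type relation together with positivity of the $q_s$.

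Next I would run the induction. Assuming $n_{s-2}q_{s-1}-n_{s-1}q_{s-2}=1$, I substitute the recursion \eqref{eq:recursion}, i.e. $n_s = b_s n_{s-1}-n_{s-2}$ and $q_s = b_s q_{s-1}-q_{s-2}$, into $n_{s-1}q_s - n_s q_{s-1}$:
\begin{equation*}
n_{s-1}(b_s q_{s-1}-q_{s-2}) - (b_s n_{s-1}-n_{s-2})q_{s-1}
= -n_{s-1}q_{s-2} + n_{s-2}q_{s-1} = n_{s-2}q_{s-1}-n_{s-1}q_{s-2}=1,
\end{equation*}
so the cross-multiplied identity propagates. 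In parallel I would check, also by induction and using $b_s\geq 2$, that the sequence $\{n_s\}$ is strictly increasing and that $0=q_0<q_1<\cdots<q_k$, so that in particular $q_{s-1}q_s>0$ for $s\geq 2$ (and $q_1>0$ handles the remaining case once one writes $\frac{n_1}{q_1}$ honestly as a convergent, which requires $s\geq 1$ and the natural reading of \eqref{eq:decreasing-1} only for $s\geq 1$ with $q_0$ interpreted via the limit, or simply restricting the displayed identities to $s$ in the range where convergents are genuine fractions, i.e. $s\geq 1$). Dividing the cross-multiplied identity by $q_{s-1}q_s$ then gives \eqref{eq:decreasing-1}, and the strict decrease of $\{n_s/q_s\}$ follows since the right-hand side is positive.

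Finally, \eqref{eq:decreasing-2} is immediate: rewriting $n_{s-1}q_s-n_sq_{s-1}=1$ as $q_s n_{s-1} - q_{s-1} n_s = 1$ and dividing by $n_s n_{s-1}$ yields $\frac{q_s}{n_s}-\frac{q_{s-1}}{n_{s-1}}=\frac{1}{n_s n_{s-1}}>0$, proving that $\{q_s/n_s\}$ is strictly increasing. The only genuine subtlety — hardly an obstacle — is bookkeeping around the degenerate initial values $q_{-1}=-1$, $q_0=0$: the determinant identity $n_{s-1}q_s-n_sq_{s-1}=1$ is the right object to carry through the induction because it makes sense for all $s\geq 0$, whereas the quotient form \eqref{eq:decreasing-1} is only meaningful once $q_{s-1},q_s\neq 0$, i.e. for $s\geq 2$ (with the case $s=1$ checked directly). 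Everything else is a one-line substitution.
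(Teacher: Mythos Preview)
Your proof is correct and follows essentially the same approach as the paper: both arguments reduce to the determinant identity $n_{s-1}q_s-n_sq_{s-1}=1$, proved by substituting the recursion \eqref{eq:recursion} and telescoping back to the initial terms, and then divide by $q_{s-1}q_s$ (resp.\ $n_{s-1}n_s$) to obtain \eqref{eq:decreasing-1} (resp.\ \eqref{eq:decreasing-2}). You are simply more explicit than the paper about the base case and about the positivity of the $q_s$ and $n_s$ needed to conclude strict monotonicity.
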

\begin{proof}
Using \eqref{eq:recursion} we can write
\begin{equation*}
\begin{split}
n_{s-1}q_s-n_sq_{s-1}&=n_{s-1}(b_s q_{s-1}-q_{s-2})-(b_s
n_{s-1}-n_{s-2})q_{s-1} \\
&=n_{s-2}q_{s-1}-n_{s-1}q_{s-2}= \ldots =n_1q_2-n_2q_1 \\
&=b_1b_2-(b_1b_2-1)=1,
\end{split}
\end{equation*}
so both \eqref{eq:decreasing-1} and \eqref{eq:decreasing-2} follow
at once.
\end{proof}

\begin{definition} \label{numbers}
Let $x$ be a cyclic quotient singularity of type $\frac{1}{n}(1,q)$
 and let $\mE$ be the corresponding HJ-string. If
 $\frac{n}{q}=[b_1, \ldots, b_k]$, we write $\mE \colon [b_1, \ldots, b_k]$ and
 we set
\begin{equation*}
\begin{split}
\ell_x=\ell({\mE})=\ell \bigg(\frac{q}{n} \bigg) &:=k,\\
h_x=h({\mE})=h \bigg(\frac{q}{n} \bigg) &:=2- \frac{2+q+q'}{n}-\sum_{i=1}^k (b_i-2), \\
e_x=e({\mE})=e \bigg(\frac{q}{n} \bigg) &:=k+1-\frac{1}{n}, \\
B_x=B({\mE})=B \bigg(\frac{q}{n} \bigg) &:= 2e_x - h_x = \frac{1}{n}
(q + q') + \sum_{i=1}^k b_i.
\end{split}
\end{equation*}
%where $1\leqslant q' \leqslant n-1$ is such that $qq' \equiv 1$
%$($\emph{mod} $n)$.
\end{definition}

\begin{remark} \label{geq 3}
We have
\begin{equation*}
\ell \bigg(\frac{q}{n} \bigg)= \ell \bigg(\frac{q'}{n} \bigg), \; \;
h \bigg(\frac{q}{n} \bigg)=h \bigg(\frac{q'}{n} \bigg), \;\; e
\bigg(\frac{q}{n} \bigg)=e \bigg(\frac{q'}{n} \bigg), \;\; B
\bigg(\frac{q}{n} \bigg)=B \bigg(\frac{q'}{n} \bigg).
\end{equation*}
Moreover $B \big(\frac{q}{n} \big) \geq 3$ and equality holds if and
only if $\frac{q}{n} = \frac{1}{2}$.
\end{remark}

For the reader's convenience, we listed in the Appendix B the
cyclic quotient singularities $\frac{1}{n}(1,\,q)$ and the
corresponding
 values of $h \big(\frac{q}{n} \big)$ and $B \big(\frac{q}{n} \big)$ for all
 $ 2 \leq n \leq 14$.

\begin{proposition} \label{B-increasing}
Let $\frac{n_s}{q_s}, \; \frac{n_t}{q_t}$ be two convergents of the
continued fraction $\frac{n}{q}=[b_1, \ldots, b_k]$, with $s \geq t$.
Then
\begin{equation*} \label{eq:B-increasing}
B \bigg(\frac{q_s}{n_s} \bigg) - B \bigg(\frac{q_t}{n_t} \bigg)
 \geq s-t
\end{equation*}
and equality holds if and only if $s=t$.
\begin{proof}
It is sufficient to prove that $B \big(\frac{q_s}{n_s} \big) - B
\big(\frac{q_{s-1}}{n_{s-1}} \big) > 1$. In fact we have
\begin{equation*}
B \bigg(\frac{q_s}{n_s} \bigg) - B \bigg(\frac{q_{s-1}}{n_{s-1}}
\bigg)=\frac{q_s}{n_s} - \frac{q_{s-1}}{n_{s-1}} +
\frac{q'_s}{n_s} - \frac{q'_{s-1}}{n_{s-1}} + b_s,
\end{equation*}
that is, using \eqref{eq:decreasing-2},
\begin{equation*}
B \bigg(\frac{q_s}{n_s} \bigg) - B \bigg(\frac{q_{s-1}}{n_{s-1}}
\bigg)> \frac{1}{n_sn_{s-1}}
-\frac{q_{s-1}'}{n_{s-1}}+b_s > b_s-1 \geq 1.
\end{equation*}
\end{proof}
\end{proposition}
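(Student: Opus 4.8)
The plan is to reduce the statement to the case of two \emph{consecutive} convergents and then exploit the closed formula for $B$ recorded in Definition \ref{numbers}. Since by definition $\frac{n_s}{q_s}=[b_1,\ldots,b_s]$ with all partial quotients $b_i\geq 2$, the truncated expansion $[b_1,\ldots,b_s]$ is precisely the Hirzebruch--Jung string of the cyclic quotient singularity $\frac{1}{n_s}(1,q_s)$ (here $(n_s,q_s)=1$ because $n_{s-1}q_s-n_sq_{s-1}=1$, as established in the proof of Proposition \ref{decreasing}). Hence Definition \ref{numbers} applies and gives
\begin{equation*}
B\Big(\frac{q_s}{n_s}\Big)=\frac{1}{n_s}\,(q_s+q_s')+\sum_{i=1}^{s}b_i ,
\end{equation*}
where $q_s'$ denotes the inverse of $q_s$ modulo $n_s$. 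Therefore, once one knows the one-step estimate $B\big(\frac{q_s}{n_s}\big)-B\big(\frac{q_{s-1}}{n_{s-1}}\big)>1$ for every $s$, summing it over $s$ in the range $t+1,\ldots,s_0$ yields $B\big(\frac{q_{s_0}}{n_{s_0}}\big)-B\big(\frac{q_t}{n_t}\big)>s_0-t$ whenever $s_0>t$, while the case $s_0=t$ is trivial; this is exactly the claimed inequality, with equality holding only for $s_0=t$. So the whole matter is the one-step estimate.

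For that estimate I would subtract the two closed formulas and invoke Proposition \ref{decreasing}:
\begin{equation*}
B\Big(\frac{q_s}{n_s}\Big)-B\Big(\frac{q_{s-1}}{n_{s-1}}\Big)=\Big(\frac{q_s}{n_s}-\frac{q_{s-1}}{n_{s-1}}\Big)+\Big(\frac{q_s'}{n_s}-\frac{q_{s-1}'}{n_{s-1}}\Big)+b_s ,
\end{equation*}
where the first bracket equals $\frac{1}{n_sn_{s-1}}>0$ by \eqref{eq:decreasing-2}. For the middle bracket it suffices to use the crude bounds $0<q_s'<n_s$ and $0<q_{s-1}'<n_{s-1}$, which give $\frac{q_s'}{n_s}-\frac{q_{s-1}'}{n_{s-1}}>-1$; combined with $b_s\geq 2$ this produces $B\big(\frac{q_s}{n_s}\big)-B\big(\frac{q_{s-1}}{n_{s-1}}\big)>\frac{1}{n_sn_{s-1}}+1>1$.

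The only point that might look delicate is controlling the dual numerator $q_s'$, which is neither monotone in $s$ nor given by any tidy formula in terms of the $b_i$; the key observation --- and the reason the whole argument stays elementary --- is that no fine information about $q_s'$ is needed, since the trivial interval $0<q_s'<n_s$ already leaves enough room once the integer part $b_s\geq 2$ is accounted for. I therefore do not expect a genuine obstacle: the substance of the proposition lies entirely in the telescoping reduction together with the explicit expression for $B$ from Definition \ref{numbers}.
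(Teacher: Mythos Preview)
Your proof is correct and follows essentially the same approach as the paper: reduce to the one-step estimate, expand using the closed formula for $B$, apply \eqref{eq:decreasing-2} for the $q/n$ part, and use the trivial bound $0<q'/n<1$ together with $b_s\geq 2$ for the remainder. The only cosmetic difference is that the paper drops the positive term $q_s'/n_s$ and bounds $q_{s-1}'/n_{s-1}<1$ separately, whereas you bound the whole bracket $q_s'/n_s-q_{s-1}'/n_{s-1}>-1$ in one stroke; these are the same estimate.
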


\begin{corollary} \label{cor-B-increasing}
Let $\frac{n}{q}=[b_1, \ldots, b_k]$ and let $c \in \mathbb{N}$ be
such that $b_1 \geq c$. Then
\begin{equation*}
B \bigg(\frac{q}{n} \bigg) \geq B \bigg(\frac{1}{c} \bigg)=
c+\frac{2}{c}
\end{equation*}
and equality holds if and only if  $\frac{q}{n}=\frac{1}{c}$.
\end{corollary}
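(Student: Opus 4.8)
The plan is to combine Proposition \ref{B-increasing}, applied to the very first convergent of $\frac{n}{q}$, with an elementary monotonicity estimate for the function $m \mapsto m + \tfrac{2}{m}$ on the integers.

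First I would record that $\frac{1}{c}=[c]$ is a continued fraction of length one, so here $q=q'=1$, and the formula for $B$ in Definition \ref{numbers} gives directly $B\big(\tfrac1c\big)=\tfrac1c(1+1)+c=c+\tfrac2c$; this pins down the right-hand side. Next, from the recursion \eqref{eq:recursion} with initial data $n_{-1}=0,\ n_0=1,\ q_{-1}=-1,\ q_0=0$ one computes $\frac{n_1}{q_1}=[b_1]=\frac{b_1}{1}$, i.e. the first convergent of $\frac{n}{q}=[b_1,\ldots,b_k]$ is $\frac{q_1}{n_1}=\frac{1}{b_1}$. Applying Proposition \ref{B-increasing} with $s=k$ and $t=1$ then yields
\[
B\Big(\frac{q}{n}\Big)=B\Big(\frac{q_k}{n_k}\Big)\ \ge\ B\Big(\frac{q_1}{n_1}\Big)=B\Big(\frac{1}{b_1}\Big)=b_1+\frac{2}{b_1},
\]
with equality if and only if $k=1$, that is, if and only if $\frac{q}{n}=\frac{1}{b_1}$.

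It then remains to compare $b_1+\tfrac{2}{b_1}$ with $c+\tfrac{2}{c}$ under the hypothesis $b_1\ge c$. Writing $f(m)=m+\tfrac{2}{m}$, for every integer $m\ge 2$ we have $f(m+1)-f(m)=1-\tfrac{2}{m(m+1)}>0$ (since $m(m+1)\ge 6$), so $f$ is strictly increasing on $\{2,3,\ldots\}$. As every entry of a Hirzebruch--Jung continued fraction satisfies $b_1\ge 2$, the hypothesis $b_1\ge c$ gives $f(b_1)\ge f(c)$, hence $B\big(\tfrac qn\big)\ge B\big(\tfrac1c\big)$ by the previous step. For the equality case, $B\big(\tfrac qn\big)=B\big(\tfrac1c\big)$ forces simultaneously $k=1$ --- whence $\tfrac qn=\tfrac{1}{b_1}$ --- and $f(b_1)=f(c)$, and strict monotonicity of $f$ on $\{2,3,\ldots\}$ then gives $b_1=c$, i.e. $\tfrac qn=\tfrac1c$. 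The converse implication is immediate from the computation $B\big(\tfrac1c\big)=c+\tfrac2c$ above.

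I do not expect a genuine obstacle here: the only point worth noting is that $\frac1c$ is in general \emph{not} one of the convergents of $\frac nq$, so Proposition \ref{B-increasing} cannot be invoked for the pair $\big(\tfrac qn,\tfrac1c\big)$ directly --- one must pass through the first convergent $\tfrac{1}{b_1}$ and then use monotonicity of $f$. (Strictly speaking $f$ is not monotone at $m=1$, where $f(1)=f(2)=3$, but this is harmless since $b_1\ge 2$ always, and the values of $c$ arising in the applications are $\ge 2$ as well.)
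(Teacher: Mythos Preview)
Your proof is correct and follows essentially the same route as the paper: apply Proposition \ref{B-increasing} with $s=k$, $t=1$ to reduce to the first convergent $\tfrac{1}{b_1}$, and then use the monotonicity of $m\mapsto m+\tfrac{2}{m}$ to pass from $b_1$ to $c$. The paper's version is terser (it writes the chain of inequalities in one line without justifying the monotonicity step), while you spell out the increment $f(m+1)-f(m)>0$ and flag the harmless degeneracy at $m=1$; otherwise the arguments coincide.
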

\begin{proof}
Setting $s=k$ and $t=1$ in Proposition \ref{B-increasing} we obtain
\begin{equation*}
B \bigg( \frac{q}{n}\bigg) \geq B \bigg( \frac{1}{b_1}\bigg)= b_1 + \frac{2}{b_1} \geq
c + \frac{2}{c}=B \bigg( \frac{1}{c}\bigg)
\end{equation*}
and equality holds if and only if $k=1$ and $c=b_1$.
\end{proof}

There is a duality between the $HJ$-expansions of $\frac{n}{q}$ and
$\frac{n}{n-q}$, which comes from the Riemenschneider's point
diagram (\cite[p. 222]{Rie74}). It basically says that if
$\frac{q}{n} \neq \frac{1}{2}$ then there exist nonnegative integers
$k_1, \ldots, k_t$, $l_1, \ldots, l_{t-1}$ such that
\begin{equation} \label{duality}
\begin{split}
\frac{n}{q}&=[(2)^{k_1}, \; l_1+3, \; (2)^{k_2},  \ldots,
(2)^{k_{t-1}}, \; l_{t-1}+3,
\; (2)^{k_t}], \\
\frac{n}{n-q}&=[k_1+2, \; (2)^{l_1}, \; k_2+3, \ldots,  k_{t-1}+3,
\; (2)^{l_{t-1}}, \;
 k_t+2],
\end{split}
\end{equation}
where $(2)^k$ means the constant sequence with $k$ terms equal to
$2$, in particular the empty sequence if $k=0$. It is important to
notice that both the $k_i$ or the $l_j$ may actually be equal to
zero; for instance, the case $q=1$ (i.e. $\frac{n}{1}=[n]$,
$\frac{n}{n-1}=[(2)^{n-1}]$) is obtained by setting $t=2$, $k_1=0$,
$l_1=n-3$, $k_2=0$. From a more geometric point of view, if $N$
denotes a free abelian group of rank $2$, then \eqref{duality}
reflects the duality between the oriented cone $\sigma_{n,\,q}
\subset N_{\mathbb{R}}$ associated to $\frac{n}{q}$ and the oriented
cone $\sigma_{n,\,n-q}$ associated to $\frac{n}{n-q}$ (see
\cite{NePo08}). Now let us exploit Riemenschneider's duality in
order to obtain some results on continued fractions that will be
used in the proof of Proposition \ref{delta-magg-0}.

\begin{proposition} \label{duality-2}
We have
\begin{equation*}
B \bigg(\frac{q}{n} \bigg)+B \bigg(\frac{n-q}{n} \bigg)=3
\sum_{i=1}^t (k_i+1)+ 3 \sum_{i=1}^{t-1} (l_i+1).
\end{equation*}
\end{proposition}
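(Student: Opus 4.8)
The plan is to reduce both sides to sums of partial quotients of the $HJ$-expansions of $\frac nq$ and $\frac{n}{n-q}$, and then to read those sums off directly from the Riemenschneider expansions \eqref{duality}. Throughout one works under the standing hypothesis $\frac qn\neq\frac12$ of that formula (note that $\frac qn=\frac12$ iff $\frac{n-q}{n}=\frac12$, so nothing is lost).

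The first step is to dispose of the fractional parts that appear in $B$. Write $q'$ for the inverse of $q$ modulo $n$. Then $(n-q)(n-q')\equiv qq'\equiv1\pmod n$, and since $1\le n-q'\le n-1$ this shows that $n-q'$ is the inverse of $n-q$ modulo $n$. Hence, if $\frac nq=[b_1,\dots,b_k]$ and $\frac{n}{n-q}=[c_1,\dots,c_{k'}]$, Definition~\ref{numbers} gives
\begin{equation*}
B\!\left(\frac qn\right)=\frac{q+q'}{n}+\sum_{i=1}^k b_i,\qquad
B\!\left(\frac{n-q}{n}\right)=\frac{(n-q)+(n-q')}{n}+\sum_{i=1}^{k'} c_i=2-\frac{q+q'}{n}+\sum_{i=1}^{k'} c_i ,
\end{equation*}
so that on adding these the fractional terms cancel and one is reduced to proving
\begin{equation*}
2+\sum_{i=1}^k b_i+\sum_{i=1}^{k'} c_i=3\sum_{i=1}^t(k_i+1)+3\sum_{i=1}^{t-1}(l_i+1).
\end{equation*}

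The second step is to evaluate the two sums of partial quotients from \eqref{duality}. In the first expansion the entries of $\frac nq$ consist of $\sum_{i=1}^t k_i$ twos together with the $t-1$ entries $l_1+3,\dots,l_{t-1}+3$, so $\sum_i b_i=2\sum_{i=1}^t k_i+\sum_{i=1}^{t-1}l_i+3(t-1)$. In the second expansion the entries of $\frac{n}{n-q}$ consist of $\sum_{i=1}^{t-1}l_i$ twos together with the $t$ entries $k_1+2,\,k_2+3,\dots,k_{t-1}+3,\,k_t+2$, so $\sum_i c_i=\sum_{i=1}^t k_i+2\sum_{i=1}^{t-1}l_i+(3t-2)$. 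Substituting these into the displayed identity collapses the left-hand side to $3\sum_{i=1}^t k_i+3\sum_{i=1}^{t-1}l_i+(6t-3)$, which is exactly $3\sum_{i=1}^t(k_i+1)+3\sum_{i=1}^{t-1}(l_i+1)$.

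The argument is entirely elementary, so there is no real obstacle; the only place requiring attention is the bookkeeping in the second step — in particular noticing that the two \emph{outermost} partial quotients of $\frac{n}{n-q}$ carry a ``$+2$'' while the interior ones carry a ``$+3$'' — and observing that degenerate situations in which some $k_i$ or $l_j$ vanishes (for instance $q=1$) need no separate treatment, since \eqref{duality} is already phrased so as to absorb them.
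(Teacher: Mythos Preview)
Your proof is correct and follows essentially the same approach as the paper: both compute $B(q/n)$ and $B((n-q)/n)$ directly from the Riemenschneider expansions \eqref{duality}, use $(n-q)'=n-q'$ so that the fractional parts combine to $2$, and then add the sums of partial quotients. The only cosmetic difference is that you isolate the fractional contribution first and then handle the integer sums, whereas the paper writes out each $B$ in one line and adds; the bookkeeping (in particular the ``$+2$'' on the two outermost terms of $\frac{n}{n-q}$ versus ``$+3$'' on the interior ones) is the same.
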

\begin{proof}
Using \eqref{duality} we obtain
\begin{equation*}
\begin{split}
B \bigg(\frac{q}{n} \bigg)&=\frac{q}{n} + \frac{q'}{n}+ 2
\sum_{i=1}^t k_i + \sum_{i=1}^{t-1} (l_i+3), \\
B \bigg(\frac{n-q}{n} \bigg)&=\frac{n-q}{n}+\frac{(n-q)'}{n} +
\sum_{i=1}^t (k_i+3) + 2\sum_{i=1}^{t-1} l_i -2.
\end{split}
\end{equation*}
Combining these relations and using $(n-q)'=n-q'$ we conclude the
proof.
\end{proof}

\begin{proposition} \label{prop-fraz-cont-1}
Let $n, \, q$ be positive, coprime integers and let $a$
be such that $qq'=1 + a n$.
Assume moreover that
\begin{equation*}
[(2)^{k_1}, \; l_1+3, \; (2)^{k_2}, \ldots, (2)^{k_{t-1}}, \;
l_{t-1}+3, \; (2)^{k_t}]= \frac{n}{n-q'}
\end{equation*}
for some non negative integers $k_1, \ldots, k_t$, $\,l_1, \ldots,
l_{t-1}$. Then we have
\begin{align}
& [ k_1+2, \; (2)^{l_1}, \; k_2+3, \ldots, k_{t-1}+3, \;
(2)^{l_{t-1}},
\; k_t+3] = \frac{n+q}{a+q'} \label{fraz-cont-11} \\
& [ k_1+2, \; (2)^{l_1}, \; k_2+3, \ldots, k_{t-1}+3, \;
(2)^{l_{t-1}}]
  =  \frac{q}{a}. \label{fraz-cont-12}
\end{align}
\end{proposition}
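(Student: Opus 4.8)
The plan is to reduce both identities \eqref{fraz-cont-11} and \eqref{fraz-cont-12} to a single computation: that of the penultimate convergent of one well-chosen Hirzebruch--Jung continued fraction, using the tools of Proposition \ref{decreasing}.

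First, apply Riemenschneider's duality \eqref{duality}: the hypothesis exhibits $\frac{n}{n-q'}$ in the first of the two normal forms there (with ``$q$'' $=n-q'$), so its Riemenschneider dual $\frac{n}{n-(n-q')}=\frac{n}{q'}$ is given by the second one:
\[
\frac{n}{q'}=[\,k_1+2,\;(2)^{l_1},\;k_2+3,\ldots,k_{t-1}+3,\;(2)^{l_{t-1}},\;k_t+2\,].
\]
Write $[c_1,\ldots,c_m]$ for the right-hand side; each $c_i\ge 2$, so this is a bona fide HJ expansion, with $m=t+l_1+\cdots+l_{t-1}$ and last partial quotient $c_m=k_t+2$. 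The point to notice is purely formal: the left-hand side of \eqref{fraz-cont-12} is exactly the truncation $[c_1,\ldots,c_{m-1}]$, while the left-hand side of \eqref{fraz-cont-11} is exactly $[c_1,\ldots,c_{m-1},c_m+1]$.

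It remains to pin down the convergents $\frac{N_s}{Q_s}$ of $[c_1,\ldots,c_m]$; by construction $N_m=n$, $Q_m=q'$. The determinant relation $N_{m-1}Q_m-N_mQ_{m-1}=1$ proved inside Proposition \ref{decreasing} gives $N_{m-1}q'\equiv 1 \pmod n$. Since the numerators of the convergents strictly increase (all $c_i\ge 2$) we have $1\le N_{m-1}<N_m=n$, and as $q$ is, under the understood hypothesis $1\le q\le n-1$, the inverse of $q'$ modulo $n$, this forces $N_{m-1}=q$; then $qq'-nQ_{m-1}=1$ gives $Q_{m-1}=\frac{qq'-1}{n}=a$. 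Hence $[c_1,\ldots,c_{m-1}]=\frac{N_{m-1}}{Q_{m-1}}=\frac qa$, which is \eqref{fraz-cont-12}. For \eqref{fraz-cont-11}, the recursion \eqref{eq:recursion} applied to $[c_1,\ldots,c_{m-1},c_m+1]$ — whose first $m-1$ convergents coincide with those of $[c_1,\ldots,c_m]$ — gives for its value
\[
\frac{(c_m+1)N_{m-1}-N_{m-2}}{(c_m+1)Q_{m-1}-Q_{m-2}}=\frac{N_m+N_{m-1}}{Q_m+Q_{m-1}}=\frac{n+q}{q'+a},
\]
using $N_m=c_mN_{m-1}-N_{m-2}$ and $Q_m=c_mQ_{m-1}-Q_{m-2}$; incidentally this fraction is already in lowest terms, since $N_mQ_{m-1}-N_{m-1}Q_m=-1$.

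The core argument is thus short; I expect the only genuine work to be the bookkeeping around degeneracies. One must check that $c_1=k_1+2\ge 2$ and $c_m+1=k_t+3\ge 2$, so that all the displayed continued fractions are admissible; and one must separate out the case $t=1$, i.e.\ $q'=1$ (the expansion of $\frac{n}{n-q'}$ reducing to a string of $2$'s, equivalently $q=1$), in which the duality \eqref{duality} degenerates and is not available in the form used above — there $a=0$ and both claims become trivialities once the empty continued fraction is read as $\frac10$.
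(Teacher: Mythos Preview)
Your argument is correct and takes a genuinely different route from the paper's. Both proofs begin by applying Riemenschneider duality to obtain $\frac{n}{q'}=[k_1+2,(2)^{l_1},\ldots,k_t+2]=:[c_1,\ldots,c_m]$, but they diverge from there. The paper \emph{reverses} this continued fraction using \eqref{q-prime}, exploits the trivial identities $[c_m+1,\ldots]=1+[c_m,\ldots]$ and $[c_m,c_{m-1},\ldots]=c_m-[c_{m-1},\ldots]^{-1}$, and then reverses back, checking in each case that the denominator is the modular inverse in the correct range. You instead stay with $[c_1,\ldots,c_m]$ itself and read off \eqref{fraz-cont-12} as its penultimate convergent via the determinant relation $N_{m-1}Q_m-N_mQ_{m-1}=1$, and \eqref{fraz-cont-11} from the recursion \eqref{eq:recursion} after bumping $c_m$ by $1$. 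Your approach is more unified (one setup handles both identities) and makes the coprimality of the resulting fractions automatic; the paper's approach is slightly more self-contained in that it does not need the auxiliary fact that the numerators $N_s$ strictly increase (which you use to pin down $N_{m-1}=q$, and which, while standard for $c_i\ge 2$, is not stated explicitly in the paper). Your closing remark on the degeneracy at $t=1$ is appropriate in spirit---the duality formula \eqref{duality} as written requires $t\ge 2$, and the proposition is only applied in that range---though note that the identity \eqref{fraz-cont-11} does not literally survive under the naive reading $[k_1+3]$ when $t=1$, so ``trivialities'' slightly overstates the situation there.
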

\begin{proof}
Using  \eqref{q-prime} and  \eqref{duality} we can write
\begin{equation} \label{kt+3}
\begin{split}
[k_t+3, \; (2)^{l_{t-1}}, \ldots, \; (2)^{l_1}, \; k_1+2] & = 1+
[k_t+2,
\; (2)^{l_{t-1}}, \ldots, (2)^{l_1}, \; k_1+2] \\
&=1+ \frac{n}{(q')'}=1+\frac{n}{q}=\frac{n+q}{q}.
\end{split}
\end{equation}
Since $q\cdot (a+q') \equiv 1$ (mod $n+q$) and $1 \leq a+q' <n+q$,
from
\eqref{q-prime} we obtain \eqref{fraz-cont-11}. \\
Now we have
\begin{equation*}
\begin{split}
\frac{n}{q}&=[k_t+2, \; (2)^{l_t-1}, \; k_{t-1}+3, \ldots, k_2+3, \;
(2)^{l_1}, \; k_1+2]\\ &= k_t+2-[(2)^{l_t-1}, k_{t-1}+3, \ldots,
k_2+3, \;(2)^{l_1}, \; k_1+2]^{-1},
\end{split}
\end{equation*}
which implies
\begin{equation*}
[(2)^{l_t-1}, \; k_{t-1}+3, \ldots, k_2+3, \; (2)^{l_1}, \;
k_1+2]=\frac{q}{q(k_t+2)-n}.
\end{equation*}
Since $a \cdot (q(k_t+2)-n)) \equiv 1$ (mod $q$) and $1 \leq a <q$,
by using \eqref{q-prime} we obtain \eqref{fraz-cont-12}.
\end{proof}

\begin{proposition} \label{prop-fraz-cont-2}
With the notations of Proposition \emph{\ref{prop-fraz-cont-1}}, we
have
\begin{align}
B \bigg(\frac{n-q'}{n} \bigg)+ B \bigg( \frac{a+q'}{n+q} \bigg) & =
1-\frac{1+q^2}{n(n+q)} +3 \sum_{i=1}^t(k_i+1) +3 \sum_{i=1}^{t-1}(l_i+1),
\label{fraz-cont-21} \\
B \bigg( \frac{n-q'}{n} \bigg) + B \bigg( \frac{a}{q} \bigg) &
=-\frac{1+q^2+n^2}{nq} + 3 \sum_{i=1}^t(k_i+1) +3
\sum_{i=1}^{t-1}(l_i+1). \label{fraz-cont-22}
\end{align}
\end{proposition}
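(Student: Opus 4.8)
The plan is to compute each of the three quantities $B\big(\frac{n-q'}{n}\big)$, $B\big(\frac{a+q'}{n+q}\big)$, $B\big(\frac{a}{q}\big)$ directly from the defining formula
\[
B\bigg(\frac{q}{n}\bigg)=\frac{1}{n}(q+q')+\sum_{i=1}^{k}b_i,\qquad \frac{n}{q}=[b_1,\ldots,b_k],
\]
of Definition \ref{numbers}, by inserting the continued fractions supplied by Proposition \ref{prop-fraz-cont-1}, and then to check that, once $qq'=1+an$ is substituted, the leftover arithmetic terms rearrange into the right-hand sides of \eqref{fraz-cont-21} and \eqref{fraz-cont-22}. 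Note that every entry of the continued fractions appearing in Proposition \ref{prop-fraz-cont-1} is $\geq 2$ (the entries have the form $2$, $k_i+2$ or $k_i+3$), so the formula above does apply to them; we may also assume $q\geq 2$ (hence $t\geq 2$), since $q=1$ is a degenerate case.

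First I would treat $B\big(\frac{n-q'}{n}\big)$. Using $(n-q')'=n-(q')'=n-q$ and the fact that the entries of $\frac{n}{n-q'}=[(2)^{k_1},\,l_1+3,\,(2)^{k_2},\ldots,(2)^{k_t}]$ sum to $2\sum_{i=1}^{t}k_i+\sum_{i=1}^{t-1}(l_i+3)$, one obtains
\[
B\bigg(\frac{n-q'}{n}\bigg)=2-\frac{q+q'}{n}+2\sum_{i=1}^{t}k_i+\sum_{i=1}^{t-1}(l_i+3).
\]
For $B\big(\frac{a+q'}{n+q}\big)$ and $B\big(\frac{a}{q}\big)$ the decisive step is to identify the relevant arithmetic inverses. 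Reversing the continued fraction in \eqref{fraz-cont-11} and combining \eqref{q-prime} with \eqref{kt+3} gives $(a+q')'=q$; similarly, reversing \eqref{fraz-cont-12} and using \eqref{q-prime} together with the dual expansion of $\frac{n}{q}$ computed inside the proof of Proposition \ref{prop-fraz-cont-1} gives $a'=q(k_t+2)-n$, which indeed lies in $\{1,\ldots,q-1\}$ because $k_t+1<\frac{n}{q}<k_t+2$. Summing the entries of \eqref{fraz-cont-11} and of \eqref{fraz-cont-12} then yields closed formulas for $B\big(\frac{a+q'}{n+q}\big)$ and $B\big(\frac{a}{q}\big)$ of exactly the same shape as the one displayed above.

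Finally I would add up the three formulas: a routine collection of terms (in which the contributions linear in the $k_i$ and $l_i$ and all the constants match the combinatorial parts of the right-hand sides, including the stray $1$ in \eqref{fraz-cont-21}) shows that each of \eqref{fraz-cont-21} and \eqref{fraz-cont-22} is equivalent to one of the elementary identities
\[
-\frac{q+q'}{n}+\frac{a+q'+q}{n+q}=-\frac{1+q^2}{n(n+q)},\qquad -\frac{q+q'}{n}+\frac{a-n}{q}=-\frac{1+q^2+n^2}{nq}.
\]
On clearing denominators these become $-q^2-qq'+an=-q^2-1$ and $-q^2-qq'+an-n^2=-q^2-1-n^2$, which are immediate consequences of $qq'=1+an$. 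The step I expect to be the main obstacle is the bookkeeping in the middle paragraph — matching the arithmetic inverses $(n-q')'$, $(a+q')'$, $a'$ with the reversed continued fractions and correctly counting the $k$- and $l$-blocks contributing to each sum of entries. This is purely mechanical but error-prone; everything else is routine.
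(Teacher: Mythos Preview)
Your approach is correct and is essentially identical to the paper's: the paper also writes out $B\big(\frac{n-q'}{n}\big)$, $B\big(\frac{a+q'}{n+q}\big)$ and $B\big(\frac{a}{q}\big)$ explicitly from Definition~\ref{numbers} (using the same identifications $(n-q')'=n-q$, $(a+q')'=q$, $a'=q(k_t+2)-n$ that you single out) and then simply adds. Your write-up is in fact slightly more detailed than the paper's, which just states the three formulas and says ``summing we obtain \eqref{fraz-cont-21} and \eqref{fraz-cont-22}''; the reduction to the two elementary identities in $q,q',a,n$ and their verification via $qq'=1+an$ is exactly the computation hiding behind that sentence.
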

\begin{proof}
Write
\begin{align}
B \bigg( \frac{n-q'}{n} \bigg)&= \frac{n-q'}{n} + \frac{n-q}{n}+ 2
\sum_{i=1}^t k_i + \sum_{i=1}^{t-1} (l_i+3), \label{E1} \\
B \bigg( \frac{a+q'}{n+q} \bigg)&= \frac{a+q'}{n+q} +
\frac{q}{n+q} + \sum_{i=1}^t (k_i+3) + 2\sum_{i=1}^{t-1} l_i -1, \label{E2} \\
B \bigg( \frac{a}{q} \bigg) &= \frac{a}{q} + \frac{q(k_t+2)-n}{q} +
\sum_{i=1}^{t-1} (k_i+3) + 2\sum_{i=1}^{t-1} l_i -1. \label{E3}
\end{align}
Summing \eqref{E1} and \eqref{E2} we obtain \eqref{fraz-cont-21},
whereas summing \eqref{E1} and \eqref{E3} we obtain
\eqref{fraz-cont-22}.
\end{proof}

\section{Standard isotrivial fibrations} \label{sec:standard-iso}

In this section we summarize the basic properties of standard
isotrivial fibrations. Definition \ref{def-stand} and Theorem
\ref{Serrano} can be found in \cite{Se96}.

\begin{definition} \label{def-stand}
We say that a projective surface $S$ is a  \emph{standard isotrivial
fibration} if there exists a finite group $G$ acting faithfully on
two smooth projective curves $C_1$ and $C_2$ so that $S$ is
isomorphic to the minimal desingularization of $T:=(C_1 \times
C_2)/G$, where $G$ acts diagonally on the product. The two maps
$\alpha_1 \colon S \lr C_1/G$, $\alpha_2 \colon S \lr C_2/G$ will be
referred as the \emph{natural projections}. If $T$ is smooth then
$S=T$ is called a $\emph{quasi-bundle}$.
\end{definition}

\begin{remark} \label{no-loss-if-standard}
A monodromy argument shows that every isotrivial fibred surface $X$
is birationally isomorphic to a standard isotrivial fibration
$($\cite[Section $2$]{Se96}$)$.
\end{remark}

The stabilizer $H  \subseteq G$ of a point $y \in C_2$ is a cyclic
group (\cite[p. 106]{FK92}). If $H$ acts freely on $C_1$, then $T$
is smooth along the scheme-theoretic fibre of $\sigma \colon T \lr
C_2/G$
 over $\bar{y} \in C_2/G$, and this fibre consists of the curve $C_1/H$
 counted with multiplicity $|H|$. Thus, the smooth fibres of $\sigma$
 are all isomorphic to $C_1$. On the contrary, if $ x \in C_1$ is fixed
 by some non-zero element of $H$, then one has a cyclic quotient
 singularity  over the point $\overline{(x,y)} \in T$.
These observations lead to the following
 statement, which describes the singular fibres that can arise in a
 standard isotrivial fibration (see \cite[Theorem 2.1]{Se96}).
\begin{theorem} \label{Serrano}
Let $\lambda \colon S \lr T=(C_1 \times C_2)/G$ be a standard
isotrivial fibration and let us consider the natural projection
 $\alpha_2 \colon S \lr C_2/G$.
Take any point over $\bar{y} \in C_2/G$ and let $F$ denote the
schematic fibre of $\alpha_2$ over $\bar{y}$. Then
\begin{itemize}
\item[$(i)$] The reduced structure of $F$ is the union of an
irreducible curve $Y$, called the central component of $F$,
 and either none or at least two mutually disjoint HJ-strings, each
 meeting $Y$ at one point, and each being contracted by
 $\lambda$ to a singular point of $T$.
 These strings are in one-to-one
 correspondence with the branch points of $C_1 \lr C_1/H$, where $H
 \subseteq G$ is the stabilizer of $y$.
\item[$(ii)$] The intersection of a string with $Y$ is transversal,
and it takes place at only one of the end components of the string.
\item[$(iii)$] $Y$ is isomorphic to $C_1/H$, and has multiplicity
equal to $|H|$ in $F$.
\end{itemize}
An analogous statement holds if one considers the natural projection
$\alpha_1 \colon S \lr C_1/G$.
\end{theorem}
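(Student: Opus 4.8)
The plan is to prove Theorem \ref{Serrano} by analyzing the local structure of the quotient $T=(C_1\times C_2)/G$ directly, following the observations made just before the statement. First I would fix a point $\bar y\in C_2/G$, choose a preimage $y\in C_2$, and let $H\subseteq G$ be its stabilizer, which is cyclic of some order $m$ by \cite[p.\ 106]{FK92}. The scheme-theoretic fibre of $\sigma\colon T\to C_2/G$ over $\bar y$ is the image of $C_1\times\{y\}$ under the quotient map, which is $(C_1\times\{y\})/H\cong C_1/H$, and the multiplicity of this component equals $|H|=m$ because locally near a point with trivial $C_1$-stabilizer the map $C_2\to C_2/G$ is $m$-to-one and ramified to order $m$. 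This already gives the central component $Y\cong C_1/H$ and its multiplicity, i.e.\ parts of (i) and (iii). To get the full fibre $F$ on $S$ rather than on $T$, I would then examine the singular points of $T$ lying on $Y$ and replace them by their HJ-strings via the Hirzebruch--Jung resolution recalled in Section \ref{HJ-res}.

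The key step is the local analysis at a point $\overline{(x,y)}\in T$ where $x\in C_1$ is fixed by a nontrivial element of $H$. Writing the stabilizer in $H$ of the pair $(x,y)$ as $H_x=\langle h\rangle$ of order $m'\mid m$, I would choose local analytic coordinates $u$ on $C_1$ at $x$ and $v$ on $C_2$ at $y$ in which $h$ acts by $u\mapsto \zeta u$, $v\mapsto \zeta^{-1}v$ for a primitive $m'$-th root of unity $\zeta$ — here the opposite exponents on the two factors are forced because $h$ must act trivially on the quotient curve $C_2/G$ direction only after taking the full $H$-quotient, and the rotation constants on $C_1$ and $C_2$ at a common fixed point of the diagonal action are inverse to one another (this is exactly the content of the rotation-constant formalism of Proposition \ref{fixed-points} and Corollary \ref{cor:fix}). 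Consequently the quotient $\mathbb{C}^2/H_x$ has at $\overline{(x,y)}$ a cyclic quotient singularity of type $\frac1{m'}(1,-1)=\frac1{m'}(1,m'-1)$, i.e.\ a rational double point of type $A_{m'-1}$; more generally, after also quotienting by the (free-on-a-neighborhood) residual part of $H$, one still gets a cyclic quotient singularity. Its minimal resolution contributes an HJ-string meeting the strict transform of $Y$ transversally at a single end component — which is assertion (ii) — because in the toric/HJ picture the curve $Y$ corresponds to one of the two boundary rays of the cone $\sigma_{n,q}$ and meets only the first (or last) exceptional curve $Z_1$ (resp.\ $Z_k$), with intersection number $1$.

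Next I would establish the combinatorial dichotomy ``none or at least two'' strings and the bijection with branch points of $C_1\to C_1/H$. The strings on $F$ are in bijection with the singular points of $T$ on $Y$, which in turn correspond to the $H$-orbits of points $x\in C_1$ with nontrivial $H_x$, i.e.\ precisely to the branch points of the cyclic cover $C_1\to C_1/H$; this gives the stated one-to-one correspondence. To see that there cannot be exactly one such point, I would invoke the Riemann--Hurwitz relation \eqref{riemanhur} for the cyclic cover $C_1\to C_1/H$ of degree $m$: if there were a single branch point, the ramification term $\sum(1-1/m_i)$ would be a single term strictly less than $1$, forcing $2g(C_1)-2<m\,(2g(C_1/H)-2)+m$, i.e.\ $2g(C_1)-2 < 2m\,g(C_1/H)-m$; combined with $2g(C_1)-2\ge -2$ this is incompatible with $m\ge 2$ unless the cover is unramified, a contradiction (the familiar fact that a cyclic — indeed any — cover of curves cannot be ramified over exactly one point). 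Finally, the analogous statement for $\alpha_1\colon S\to C_1/G$ follows by symmetry, exchanging the roles of the two factors.

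The main obstacle I anticipate is pinning down the precise exponent of the cyclic quotient singularity at $\overline{(x,y)}$ — that is, justifying rigorously that the $H$-action in suitable local coordinates is ``balanced'' in the sense that the two rotation constants are mutually inverse, so that one really lands on a standard $\frac1n(1,q)$-type singularity with $Y$ appearing as a boundary ray rather than in the interior of the resolution. This requires the careful bookkeeping of how the full cyclic stabilizer $H$ (not just the diagonal stabilizer of the point) acts, including the contribution of elements of $H$ that move $x$ but whose powers fix it, and matching this against the local model $X_{n,q}=\mathbb{C}^2/\mathbb{Z}_n$ of Section \ref{HJ-res}; the rotation-constant formalism set up in Proposition \ref{fixed-points} is exactly what makes this bookkeeping manageable.
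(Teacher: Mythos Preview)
The paper does not prove this theorem; it is quoted from \cite[Theorem 2.1]{Se96}, preceded only by the heuristic discussion of stabilizers that you have (correctly) taken as your starting point. Your overall outline --- identify the fibre of $\sigma$ as $C_1/H$ with multiplicity $|H|$, then resolve the cyclic quotient singularities lying over the branch points of $C_1\to C_1/H$ and observe that each HJ-string meets the strict transform of $Y$ transversally in an end component --- is the natural one and is essentially what Serrano does.

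Two steps, however, are wrong as written. First, the assertion that the rotation constants on the two factors are mutually inverse is false: if $h$ generates the stabilizer $H_x\subseteq H$ of $(x,y)$ and acts on the tangent spaces at $x\in C_1$ and $y\in C_2$ by primitive $m'$-th roots $\zeta_1$ and $\zeta_2$, nothing forces $\zeta_2=\zeta_1^{-1}$. Writing $\zeta_2=\zeta_1^{q}$, the local quotient is a cyclic singularity of type $\frac{1}{m'}(1,q)$ with $q$ arbitrary; Proposition \ref{fixed-points} and Corollary \ref{cor:fix} count fixed points with prescribed rotation constant on a \emph{single} curve and say nothing about the two constants being inverse. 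Indeed, the entire paper from Section \ref{sec:standard-iso} onward treats general $\frac{1}{n}(1,q)$ singularities on $T$; were your claim correct, only rational double points $A_{m'-1}$ could appear and results such as Corollary \ref{integers} or Theorem \ref{existence} would be vacuous. Second, your Riemann--Hurwitz inequality for ``none or at least two branch points'' yields a contradiction only when $g(C_1/H)=0$; for higher-genus quotient the numerics you write down are perfectly consistent. The correct argument is purely group-theoretic: $H$ is cyclic, hence abelian, so in any generating vector the relation $g_1\cdots g_r\prod_{i}[h_i,h_{i+\mathfrak{g}'}]=1$ collapses to $g_1\cdots g_r=1$, which is impossible with $r=1$ and $|g_1|\ge 2$. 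This is precisely Remark \ref{abelian-gen}. (Your parenthetical ``indeed any cover'' is also false: non-abelian Galois covers of a curve of genus $\ge 1$ can be branched over a single point.)
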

In the sequel we denote by $\mathscr{H}(F)$ the set of the
$HJ$-strings contained in $F$ and we say that $F$ is a
\emph{reducible fibre} if $\mathscr{H}(F) \neq \emptyset$. Theorem
\ref{Serrano} therefore implies
\begin{remark} \label{card H}
For every reducible fibre $F$, the cardinality of $\mathscr{H}(F)$
is at least two.
\end{remark}
For a proof of the following result, see \cite[p. 509-510]{Bar99},
\cite{Fre71}, \cite{MiPol08}.

\begin{proposition} \label{invariants-S}
Let $\lambda \colon S \lr T=(C_1 \times C_2)/G$ be a standard
isotrivial fibration. Then the invariants of $S$ are given by
\begin{itemize}
\item[$(i)$] $K_S^2 =\frac{8(g(C_1)-1)(g(C_2)-1)}{|G|} + \sum \limits_{x \in \emph{Sing}\;
T}h_x$;
\item[$(ii)$] $e(S)=\frac{4(g(C_1)-1)(g(C_2)-1)}{|G|}
+\sum \limits_{x \in \emph{Sing}\; T}e_x$;
\item[$(iii)$] $q(S)=g(C_1/G)+g(C_2/G)$.
\end{itemize}
\end{proposition}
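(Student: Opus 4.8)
The idea is to transport each invariant from the smooth product $C_1\times C_2$ along the quotient map $\psi\colon C_1\times C_2\lr T$, and then to correct for the two resolutions involved: the cyclic quotient singularities of $T$ on the one hand, and the corresponding HJ-strings on $S$ on the other. The basic observation, used everywhere, is that since $G$ acts faithfully on both $C_1$ and $C_2$, every nontrivial $g\in G$ has finite fixed locus on each curve, hence finite fixed locus on the product; therefore $\psi$ is a Galois cover of degree $|G|$ which is unramified in codimension one, its branch locus being exactly $\mathrm{Sing}(T)$. Since cyclic quotient singularities are rational and $\mathbb{Q}$-Gorenstein, we have $q(S)=q(T)$, $\chi(\mO_S)=\chi(\mO_T)$, and $K_T$ is a well-defined $\mathbb{Q}$-Cartier divisor.

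For part $(iii)$: all maps in sight are finite, so cohomology commutes with pushforward, and taking $G$-invariants is exact in characteristic zero; hence $H^1(\mO_T)=H^1(\mO_{C_1\times C_2})^G$. By the K\"unneth formula the latter is $H^1(\mO_{C_1})^G\oplus H^1(\mO_{C_2})^G$, and for each curve $H^1(\mO_{C_i})^G\cong H^1(\mO_{C_i/G})$; therefore $q(S)=q(T)=g(C_1/G)+g(C_2/G)$.

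For part $(i)$: since $\psi$ is unramified in codimension one, the Hurwitz formula gives $\psi^*K_T=K_{C_1\times C_2}$, so by the projection formula $|G|\,K_T^2=K_{C_1\times C_2}^2=8(g(C_1)-1)(g(C_2)-1)$. Writing the minimal resolution as $K_S=\lambda^*K_T+\sum_{x\in\mathrm{Sing}(T)}\Delta_x$, with $\Delta_x=\sum_i a_iZ_i$ the discrepancy cycle supported on the HJ-string $\mE_x\colon[b_1,\ldots,b_k]$ over $x$, and using $\lambda^*K_T\cdot\mE_x=0$, we obtain $K_S^2=K_T^2+\sum_x\Delta_x^2$. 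It remains to check $\Delta_x^2=h_x$. Adjunction on each $Z_i\cong\mathbb{P}^1$ gives $K_S\cdot Z_i=b_i-2$, so $\Delta_x^2=\Delta_x\cdot(K_S-\lambda^*K_T)=\sum_i a_i(b_i-2)$, where the discrepancies $a_i$ are the unique solution of the linear system $\sum_i a_i(Z_i\cdot Z_j)=b_j-2$. Solving this system by means of the Hirzebruch--Jung continued fraction of $\frac{n}{q}$ (equivalently, reading off the tables of Appendix B) yields $\sum_i a_i(b_i-2)=2-\frac{2+q+q'}{n}-\sum_i(b_i-2)=h_x$, and the formula $(i)$ follows. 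I expect this local discrepancy identity to be the only step requiring a genuine computation; everything else is formal.

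For part $(ii)$: the topological Euler number is additive over a stratification and multiplicative in finite unramified covers. Let $\Sigma\subset C_1\times C_2$ be the finite set of points with nontrivial stabilizer, so that $\psi(\Sigma)=\mathrm{Sing}(T)$ and $G$ acts freely on $C_1\times C_2\setminus\Sigma$; then $e(C_1\times C_2)=|G|\,e\big(T\setminus\mathrm{Sing}(T)\big)+|\Sigma|$. A point over a singularity of type $\frac{1}{n_x}(1,q_x)$ has stabilizer of order $n_x$, hence orbit of size $|G|/n_x$, so $|\Sigma|=\sum_x|G|/n_x$; using $e(C_1\times C_2)=4(g(C_1)-1)(g(C_2)-1)$ we get $e\big(T\setminus\mathrm{Sing}(T)\big)=\frac{4(g(C_1)-1)(g(C_2)-1)}{|G|}-\sum_x\frac{1}{n_x}$. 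Finally $S$ is obtained from $T\setminus\mathrm{Sing}(T)$ by gluing in, over each $x$, a chain $\mE_x$ of $\ell_x$ smooth rational curves (Theorem \ref{Serrano}), which has Euler number $\ell_x+1$; hence, recalling from Definition \ref{numbers} that $e_x=\ell_x+1-\frac{1}{n_x}$,
\[
e(S)=e\big(T\setminus\mathrm{Sing}(T)\big)+\sum_x(\ell_x+1)=\frac{4(g(C_1)-1)(g(C_2)-1)}{|G|}+\sum_x e_x,
\]
which is $(ii)$. Once the \'etale-in-codimension-one structure of $\psi$ is in hand, parts $(ii)$ and $(iii)$ reduce to additivity/multiplicativity of $e$, exactness of $G$-invariants, and K\"unneth, so the heart of the matter is the computation in $(i)$.
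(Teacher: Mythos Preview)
Your argument is correct and in fact more detailed than what the paper offers: the paper does not prove this proposition at all, but simply cites \cite{Bar99}, \cite{Fre71}, and \cite{MiPol08}. The route you take---using that $\psi$ is quasi-\'etale to identify $\psi^*K_T$ with $K_{C_1\times C_2}$, computing the local discrepancy contribution $\Delta_x^2$ via adjunction on the HJ-string, and stratifying by the exceptional locus for the Euler number---is precisely the standard argument carried out in those references.

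The only step you leave compressed is the verification that $\sum_i a_i(b_i-2)=h_x$. This is a genuine continued-fraction identity: if one sets $\alpha_0=0$, $\alpha_1=1$, $\alpha_{i+1}=b_i\alpha_i-\alpha_{i-1}$ and similarly $\beta_{k+1}=0$, $\beta_k=1$, $\beta_{i-1}=b_i\beta_i-\beta_{i+1}$, then $a_i=-1+\frac{\alpha_i+\beta_i}{n}$, and substituting into $\sum_i a_i(b_i-2)$ telescopes to $2-\frac{2+q+q'}{n}-\sum_i(b_i-2)$. It would strengthen the write-up to spell this out once rather than pointing to Appendix~B, but as a proof plan it is sound.
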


\begin{corollary} \label{chi-S}
Let $\lambda \colon S \lr T=(C_1 \times C_2)/G$ be a standard
isotrivial fibration. Then
\begin{equation} \label{eq:chi-S}
K_S^2= 8 \chi(\mO_S)- \frac{1}{3} \sum_{x \in \emph{Sing }T}B_x.
\end{equation}
\end{corollary}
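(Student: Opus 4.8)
The plan is to combine the two formulas of Proposition \ref{invariants-S} via Noether's formula $12\chi(\mO_S) = K_S^2 + e(S)$, and then recognize the resulting combination of local contributions as $\frac{1}{3}B_x$ using the definition $B_x = 2e_x - h_x$ from Definition \ref{numbers}.

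First I would write, using Proposition \ref{invariants-S}$(i)$ and $(ii)$ and Noether,
\begin{equation*}
12\chi(\mO_S) = K_S^2 + e(S) = \frac{12(g(C_1)-1)(g(C_2)-1)}{|G|} + \sum_{x \in \mathrm{Sing}\,T}(h_x + e_x),
\end{equation*}
so that
\begin{equation*}
8\chi(\mO_S) = \frac{8(g(C_1)-1)(g(C_2)-1)}{|G|} + \frac{2}{3}\sum_{x \in \mathrm{Sing}\,T}(h_x + e_x).
\end{equation*}
Subtracting the expression for $K_S^2$ in Proposition \ref{invariants-S}$(i)$, the term $\frac{8(g(C_1)-1)(g(C_2)-1)}{|G|}$ cancels and one is left with
\begin{equation*}
8\chi(\mO_S) - K_S^2 = \sum_{x \in \mathrm{Sing}\,T}\left( \frac{2}{3}h_x + \frac{2}{3}e_x - h_x \right) = \frac{1}{3}\sum_{x \in \mathrm{Sing}\,T}(2e_x - h_x) = \frac{1}{3}\sum_{x \in \mathrm{Sing}\,T} B_x,
\end{equation*}
where the last equality is precisely the identity $B_x = 2e_x - h_x$ of Definition \ref{numbers}. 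Rearranging gives \eqref{eq:chi-S}.

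There is essentially no obstacle here: the statement is a formal consequence of Proposition \ref{invariants-S}, Noether's formula, and the bookkeeping definition of $B_x$; the only thing to be careful about is keeping track of the coefficients $\tfrac{2}{3}$ and $\tfrac{1}{3}$ correctly and observing that the ``continuous'' part $\tfrac{8(g(C_1)-1)(g(C_2)-1)}{|G|}$ drops out, leaving only the sum of local terms supported on $\mathrm{Sing}\,T$.
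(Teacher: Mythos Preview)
Your proof is correct and is essentially the same as the paper's: both combine Proposition \ref{invariants-S}$(i)$--$(ii)$ with Noether's formula and the identity $B_x = 2e_x - h_x$. The paper first observes $K_S^2 = 2e(S) - \sum_x(2e_x - h_x)$ and then substitutes $e(S) = 12\chi(\mO_S) - K_S^2$, whereas you first compute $12\chi(\mO_S)$ and then subtract $K_S^2$; the algebraic content is identical.
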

\begin{proof}
Proposition \ref{invariants-S} yields $K_S^2=2e(S)-\sum_{x \in
\textrm{Sing }T}(2e_x-h_x)$. By Noether's formula we have $K_S^2=12
\chi(\mO_S) -e(S)$, so \eqref{eq:chi-S} follows.
\end{proof}
Let us consider now the minimal resolution of a cyclic quotient
singularity $x \in T$. If $Y_1$ and $Y_2$ are the strict
transforms of $C_1$ and $C_2$, by Theorem \ref{Serrano} we obtain
the situation illustrated in Figure \ref{figure-res}.
\begin{figure}[H]
\begin{center}
\includegraphics*[totalheight=6 cm]{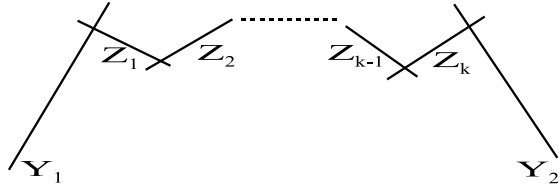}
\end{center}
\caption{Resolution of a cyclic quotient singularity $x \in T$}
\label{figure-res}
\end{figure}
The curves $Y_1$ and $Y_2$ are the central components of two
reducible fibres $F_1$ and $F_2$ of $\alpha_2 \colon S \lr C_2/G$
and $\alpha_1 \colon S \lr C_1/G$, respectively. Then there exist
$\lambda_1, \ldots, \lambda_k, \, \mu_1, \ldots, \mu_k \in
\mathbb{N}$ such that
\begin{equation} \label{eq:fibres}
\begin{split}
F_1&=\rho_1 Y_1+\sum_{i=1}^k \lambda_i Z_i + \Gamma_1, \\
F_2&=\rho_2 Y_2+ \sum_{i=1}^k \mu_i Z_i + \Gamma_2,
\end{split}
\end{equation}
where the supports of both divisors $\Gamma_1$ and $\Gamma_2$ are union of $HJ$-strings
disjoint from the $Z_i$; moreover if $x$ is of type
$\frac{1}{n}(1,q)$, then
 $n$ divides both $\rho_1$ and $\rho_2$. Now we have
\begin{equation} \label{eq:system-lambda}
\left\{ \begin{array}{l} 0=F_1 Z_k=-\lambda_k b_k
+\lambda_{k-1} \\
0 = F_1 Z_{k-1} = \lambda_k - b_{k-1} \lambda_{k-1} +
\lambda_{k-2} \\
\cdots \\
0= F_1Z_2 = \lambda_3 - b_2 \lambda_2 + \lambda_1 \\
0= F_1 Z_1 = \lambda_2 - b_1 \lambda_1 + \rho_1,
\end{array} \right.
\end{equation}
which gives
\begin{equation*} %\label{eq:system-2}
\left\{ \begin{array}{l} \lambda_{k-1} / \lambda_k=b_k \\
\lambda_{k-2}/ \lambda_{k-1}=[b_{k-1},\, b_k] \\
\cdots \\
 \lambda_1 / \lambda_2=[b_2, \, b_3, \ldots, b_k] \\
 \rho_1 / \lambda_1=[b_1, \, b_2, \ldots, b_k].
\end{array} \right.
\end{equation*}
In particular
\begin{equation} \label{lambda}
\lambda_1=\frac{\rho_1}{[b_1, b_2, \, \ldots b_k]}= \frac{\rho_1
q}{n}.
\end{equation}
Analogously, we have
\begin{equation*} %\label{eq:system-3}
\left\{ \begin{array}{l}  \mu_2 / \mu_1=b_1 \\
\mu_3 / \mu_2=[b_2,\, b_1] \\
\cdots \\
\mu_k / \mu_{k-1}=[b_{k-1}, \, b_{k-2}, \ldots, b_1] \\
 \rho_2 / \mu_k =[b_k, \, b_{k-1}, \ldots, b_1],
\end{array} \right.
\end{equation*}
hence
\begin{equation} \label{mu}
\mu_k=\frac{\rho_2}{[b_k, \, b_{k-1}, \ldots, b_1]}=\frac{\rho_2
q'}{n}.
\end{equation}

\begin{definition}
We say that a reducible fibre $F_1$ of $\alpha_2 \colon S \lr C_2/G$
 is of type $\big( \frac{q_1}{n_1}, \ldots, \frac{q_r}{n_r} \big)$ if
it contains exactly $r$ $HJ$-strings $\mE_1, \ldots, \mE_r$, where
each $\mE_i$ is of type $\frac{1}{n_i}(1, q_i)$. The same definition
holds for a reducible fibre $F_2$ of $\alpha_1 \colon S \lr C_1/G$.
\end{definition}

\begin{proposition} \label{selfintersection}
Let $F_1$ be of type $\big( \frac{q_1}{n_1},  \ldots,
\frac{q_r}{n_r} \big)$ and let $Y_1$ be its central component. Then
\begin{equation} \label{selfint-1}
(Y_1)^2 = - \sum_{i=1}^r \frac{q_i}{n_i}.
\end{equation}
Analogously, if $F_2$ is of type $(\frac{q_1}{n_1},  \ldots,
\frac{q_r}{n_r})$ then
\begin{equation} \label{selfint-2}
(Y_2)^2 = - \sum_{i=1}^r \frac{q_i'}{n_i}.
\end{equation}
\end{proposition}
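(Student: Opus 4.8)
The plan is to exploit the single relation that any component of a fibre of a fibration onto a curve has zero intersection with the whole fibre. Concretely, since $F_1$ is the scheme-theoretic fibre $\alpha_2^{*}(\bar y)$ of $\alpha_2 \colon S \lr C_2/G$ over some $\bar y \in C_2/G$, and $Y_1$ is an irreducible component of $F_1$, one gets $F_1 \cdot Y_1 = 0$: indeed $F_1$ is numerically equivalent to $\alpha_2^{*}(\bar y')$ for any $\bar y' \neq \bar y$, and the support of the latter is disjoint from that of $F_1$. So the whole argument reduces to expanding $F_1 \cdot Y_1 = 0$ against the explicit description of $F_1$.

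To carry this out I would combine Theorem \ref{Serrano} with formula \eqref{lambda}. By Theorem \ref{Serrano}, the reduced fibre under $F_1$ is $Y_1$ together with the $r$ HJ-strings $\mE_1, \ldots, \mE_r$ of $\mathscr{H}(F_1)$, each $\mE_i$ of type $\tfrac{1}{n_i}(1,q_i)$, mutually disjoint, and each meeting $Y_1$ transversally in exactly one point, lying on one of the two end components of $\mE_i$. Hence, writing $F_1 = \rho_1 Y_1 + \sum_{i=1}^r(\text{the part of }F_1\text{ supported on }\mE_i)$ with $\rho_1 > 0$ the multiplicity of $Y_1$ in $F_1$, the only components of $F_1$ other than $Y_1$ that meet $Y_1$ are, for each $i$, the end component $Z^{(i)}$ of $\mE_i$ abutting $Y_1$, with $Z^{(i)}\cdot Y_1 = 1$; and by \eqref{lambda} its coefficient in $F_1$ equals $\rho_1 q_i/n_i$. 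Therefore
\[
0 \;=\; F_1\cdot Y_1 \;=\; \rho_1\,(Y_1)^2 + \sum_{i=1}^r \frac{\rho_1 q_i}{n_i}\,(Z^{(i)}\cdot Y_1) \;=\; \rho_1\Big((Y_1)^2 + \sum_{i=1}^r \frac{q_i}{n_i}\Big),
\]
and dividing by $\rho_1 \neq 0$ gives \eqref{selfint-1}. For \eqref{selfint-2} I would repeat the computation with $\alpha_1$, $F_2$, $Y_2$ in place of $\alpha_2$, $F_1$, $Y_1$: the relevant end component of $\mE_i$ is now the \emph{other} end (the one labelled $Z_k$ in \eqref{eq:fibres}), whose coefficient in $F_2$ is $\rho_2 q_i'/n_i$ by \eqref{mu}, so that $0 = F_2\cdot Y_2 = \rho_2\big((Y_2)^2 + \sum_i q_i'/n_i\big)$.

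I expect no genuine difficulty; the only point requiring care is the bookkeeping of the two ends of each HJ-string — namely that for the fibre $F_1$ of $\alpha_2$ it is the end with coefficient proportional to $q_i$ (formula \eqref{lambda}) that touches $Y_1$, whereas for the fibre $F_2$ of $\alpha_1$ it is the opposite end with coefficient proportional to $q_i'$ (formula \eqref{mu}). This orientation is precisely the one fixed when deriving \eqref{lambda} and \eqref{mu} from the linear systems \eqref{eq:system-lambda}, so it is enough to invoke those formulas rather than re-examine the resolution picture.
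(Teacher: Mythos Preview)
Your proposal is correct and follows essentially the same approach as the paper's own proof: both compute $0 = F_1 \cdot Y_1$ by expanding $F_1$ into its components, observe that only the end curve of each $\mE_i$ meets $Y_1$, and invoke \eqref{lambda} (respectively \eqref{mu}) to identify that end coefficient as $\rho_1 q_i/n_i$ (respectively $\rho_2 q_i'/n_i$). The only cosmetic difference is that the paper explicitly sets $\rho_1 = \mathrm{l.c.m.}(n_1,\ldots,n_r)$, whereas you leave $\rho_1$ as the abstract multiplicity of $Y_1$; either way the division by $\rho_1 \neq 0$ is valid.
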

\begin{proof}
If $F_1$ is of type $\type$, set $\rho_1=\textrm{l.c.m.}(n_1,
\ldots, n_r)$ and
\begin{equation*}
\mE_i:= \bigcup_{j=1}^{k_i}Z_{j, \, i} \quad \quad i=1, \ldots,r.
\end{equation*}
Then we can write
\begin{equation*}
 F_1=\rho_1Y_1 + \sum_{i=1}^r \sum
_{j=1}^{k_i}\lambda_{j,\,i}\,Z_{j,\,i}.
\end{equation*}
By using (\ref{lambda}), we have
\begin{equation*}
0=F_1Y=\rho_1(Y_1)^2+\sum_{i=1}^r \lambda_{1,i}=\rho_1(Y_1)^2+\rho_1
 \sum_{i=1}^r \frac{q_i}{n_i}
\end{equation*}
and this proves (\ref{selfint-1}). Analogously, one can use
(\ref{mu}) in order to prove (\ref{selfint-2}).
\end{proof}

\begin{corollary} \label{integers}
Assume $\emph{Sing}(T)= \frac{1}{n_1}(1, \, q_1)+ \cdots
+\frac{1}{n_r}(1, \, q_r)$. Then both
\begin{equation*}
\sum_{i=1}^r \frac{q_i}{n_i} \quad \emph{and} \quad \sum_{i=1}^r
\frac{q_i'}{n_i}
\end{equation*}
are integers.
\end{corollary}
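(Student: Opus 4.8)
The plan is to deduce the integrality of both sums from the elementary fact that the self-intersection number of an irreducible curve on the smooth surface $S$ is an integer, together with Proposition \ref{selfintersection} and the way the singularities of $T$ are distributed among the fibres, as described by Theorem \ref{Serrano}. If $T$ is smooth the statement is vacuous, so I assume $\mathrm{Sing}(T)\neq\emptyset$.

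First I would record how the points of $\mathrm{Sing}(T)$ are spread among the fibres of $\alpha_2\colon S\lr C_2/G$. Write $\alpha_2=\sigma\circ\lambda$, where $\sigma\colon T\lr C_2/G$ is the natural projection. For a singular point $x\in T$ the fibre $\lambda^{-1}(x)$ is a connected $HJ$-string contracted by $\lambda$ to $x$; hence it is contracted by $\alpha_2$ as well, so it lies in a single fibre $F$ of $\alpha_2$. By Theorem \ref{Serrano} this fibre is reducible (Remark \ref{card H}) and $\lambda^{-1}(x)\in\mathscr H(F)$, while conversely every $HJ$-string lying in some $\mathscr H(F)$ is contracted by $\lambda$ to a singular point of $T$. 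Consequently
\[
\mathrm{Sing}(T)\;=\;\bigsqcup_{F}\mathscr H(F),
\]
where $F$ runs over the reducible fibres of $\alpha_2$; after reindexing, the type of each such $F$ is a sub-tuple of $\type$, and these sub-tuples are pairwise disjoint and exhaust the whole tuple.

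Then I would sum Proposition \ref{selfintersection} over this partition. For a reducible fibre $F$ of $\alpha_2$ with central component $Y_F$, equation \eqref{selfint-1} gives $(Y_F)^2=-\sum\frac{q_i}{n_i}$, the sum being over the strings of $F$; since $Y_F$ is an irreducible curve on the smooth projective surface $S$, its self-intersection is an integer, so each such partial sum lies in $\mathbb{Z}$. Adding over all reducible fibres of $\alpha_2$ and using the displayed partition yields $\sum_{i=1}^r\frac{q_i}{n_i}=-\sum_F(Y_F)^2\in\mathbb{Z}$. Finally, running the identical argument with $\alpha_1\colon S\lr C_1/G$ in place of $\alpha_2$ --- each singularity of $T$ again lying in exactly one reducible fibre of $\alpha_1$ --- and invoking equation \eqref{selfint-2}, whose right-hand side is $-\sum\frac{q_i'}{n_i}$, gives $\sum_{i=1}^r\frac{q_i'}{n_i}\in\mathbb{Z}$. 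I do not expect a genuine obstacle here: the only point that requires care is the combinatorial claim that $\mathrm{Sing}(T)$ is partitioned by the fibres of $\alpha_2$ (respectively $\alpha_1$) with neither repetition nor omission, and this is exactly what Theorem \ref{Serrano} provides.
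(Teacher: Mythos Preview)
Your argument is correct and is precisely the one the paper intends: Corollary~\ref{integers} is stated without proof immediately after Proposition~\ref{selfintersection}, and the implicit reasoning is exactly to partition $\mathrm{Sing}(T)$ among the reducible fibres of $\alpha_2$ (resp.\ $\alpha_1$) via Theorem~\ref{Serrano} and then apply \eqref{selfint-1} (resp.\ \eqref{selfint-2}) together with integrality of self-intersections. Your write-up simply makes explicit the combinatorial partition that the paper takes for granted.
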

\begin{corollary} \label{nodes}
Assume that  $T$ contains exactly $r$ ordinary double points as singularities. Then
$r$ is even.
\end{corollary}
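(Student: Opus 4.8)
The plan is to deduce this immediately from Corollary \ref{integers}. Recall from Subsection \ref{HJ-res} that an ordinary double point is precisely the cyclic quotient singularity of type $\frac{1}{2}(1,1)$, so that for a node one has $n_i=2$ and $q_i=1$; moreover $q_i'=1$ as well, since $1\cdot 1\equiv 1 \pmod 2$. Under the hypothesis that $T$ contains exactly $r$ ordinary double points as its only singularities, we may write
\begin{equation*}
\textrm{Sing}(T)= r \times \tfrac{1}{2}(1,1),
\end{equation*}
which is exactly the situation of Corollary \ref{integers} with $n_1=\cdots=n_r=2$ and $q_1=\cdots=q_r=1$.

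Then I would simply observe that Corollary \ref{integers} forces
\begin{equation*}
\sum_{i=1}^r \frac{q_i}{n_i}=\sum_{i=1}^r \frac{1}{2}=\frac{r}{2}\in\mathbb{Z},
\end{equation*}
whence $r$ is even. (The companion sum $\sum_{i=1}^r q_i'/n_i = r/2$ gives nothing new here.) There is essentially no obstacle: the entire content has already been packaged into Corollary \ref{integers}, which in turn rests on Proposition \ref{selfintersection} — the point being that the self-intersection $(Y_1)^2=-\sum q_i/n_i$ of a central component of a reducible fibre is necessarily an integer. So the only thing to check is the elementary arithmetic fact that $r/2\in\mathbb{Z}$ iff $r$ is even, and the identification of a node with $\frac{1}{2}(1,1)$, both of which are already recorded in the text.
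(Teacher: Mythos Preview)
Your proof is correct and follows exactly the approach intended by the paper: Corollary \ref{nodes} is stated without proof precisely because it is an immediate application of Corollary \ref{integers} with $n_i=2$, $q_i=1$ for all $i$, giving $\sum_i q_i/n_i = r/2 \in \mathbb{Z}$.
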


\section{The non-minimal case} \label{sec:non-minimal}

Let $\lambda \colon S \lr T:=(C_1 \times C_2)/G$ be a standard
isotrivial fibration. If $g(C_1/G) \geq 1$ and $g(C_2/G) \geq 1$ then $S$ is
necessarily a minimal model. If instead $g(C_1/G)=0$, it may happen
that the central component of some reducible fibre $F_1$ of
$\alpha_2 \colon S \lr C_2/G$ is a $(-1)$-curve. Analogously, if
$g(C_2/G)=0$ it may happen that the central component of some
reducible fibre $F_2$ of $\alpha_1 \colon S \lr C_1/G$ is a
$(-1)$-curve.

\begin{definition}
We say that a reducible fibre $F_1$ of $\alpha_2 \colon S \lr C_2/G$
is a $(-1)$-\emph{fibre} if its central component $Y_1$
 is a $(-1)$-curve. If $g(C_1)=\mg$, we will also say that $F$
is a $(-1)$-\emph{fibre in genus} $\mg$. The same definitions hold
for a reducible fibre $F_2$ of $\alpha_1 \colon S \lr C_1/G$.
\end{definition}

\begin{proposition} \label{(-1)-fibre}
Assume that $F_1$ is a reducible fibre of $\alpha_2 \colon S \lr
C_2/G$, of type $\type$. Set $\rho:=\emph{l.c.m.}(n_1, \ldots, n_r)$.
Then $F_1$ is a $(-1)$-fibre if and only if
\begin{equation*}
%\begin{split}
\sum_{i=1}^r \frac{q_i}{n_i}=1 \quad \emph{and}  \quad 2g(C_1)-2 =
\rho \St.
%\end{split}
\end{equation*}
Assume that $F_2$ is a reducible fibre of $\alpha_1 \colon S \lr
C_1/G$, of type $\type$. Then $F_2$ is a $(-1)$-fibre if and only if
\begin{equation*}
\sum_{i=1}^r \frac{q_i'}{n_i}=1 \quad \emph{and} \quad 2g(C_2)-2=
\rho \St.
\end{equation*}
\begin{proof}
Let us consider first $F_1$. By Proposition
\ref{selfintersection} and Theorem \ref{Serrano} the two conditions
are equivalent to $(Y_1)^2=-1$ and $g(Y_1)=0$, respectively.
If we consider $F_2$ the proof is analogous.
\end{proof}
\end{proposition}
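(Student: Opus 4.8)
The plan is to reduce the statement to two elementary geometric conditions on the central component $Y_1$, namely $(Y_1)^2 = -1$ and $g(Y_1) = 0$, and then translate each of these into the stated numerical equalities using results already available in the excerpt. By definition $F_1$ is a $(-1)$-fibre precisely when $Y_1$ is a $(-1)$-curve, i.e. when $Y_1 \cong \mathbb{P}^1$ and $(Y_1)^2 = -1$; so it suffices to show that $(Y_1)^2 = -1$ is equivalent to $\sum_{i=1}^r \frac{q_i}{n_i} = 1$ and that $g(Y_1) = 0$ is equivalent to $2g(C_1) - 2 = \rho\St$.

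First I would handle the self-intersection. By Proposition \ref{selfintersection}, if $F_1$ is of type $\type$ then $(Y_1)^2 = -\sum_{i=1}^r \frac{q_i}{n_i}$; hence $(Y_1)^2 = -1$ if and only if $\sum_{i=1}^r \frac{q_i}{n_i} = 1$. (Here one uses implicitly that $\sum \frac{q_i}{n_i}$ is a positive integer exactly when it equals $1$: by Remark \ref{card H} there are at least two strings, so the sum is positive, and each $q_i/n_i < 1$ forces the sum to be at most $\ldots$ — but in fact we do not need sharp bounds, only the literal equivalence $(Y_1)^2 = -1 \iff \sum q_i/n_i = 1$, which is immediate from \eqref{selfint-1}.)

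Next I would treat the genus condition. By part $(iii)$ of Theorem \ref{Serrano}, the central component $Y_1$ is isomorphic to $C_1/H$, where $H \subseteq G$ is the stabilizer of the relevant point $y \in C_2$; moreover $H$ is cyclic, and by part $(i)$ of the same theorem the HJ-strings attached to $Y_1$ correspond bijectively to the branch points of the cover $C_1 \lr C_1/H$. Thus $H$ acts on $C_1$ with quotient $C_1/H = Y_1$ and with exactly $r$ branch points, whose branching numbers are $n_1, \ldots, n_r$ (the order of the cyclic group attached to the $i$-th singularity being $n_i$, and $|H| = \rho = \mathrm{l.c.m.}(n_1, \ldots, n_r)$). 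Applying the Riemann--Hurwitz relation \eqref{riemanhur} to this action of $H$ on $C_1$ gives
\begin{equation*}
2g(C_1) - 2 = \rho\left( 2g(Y_1) - 2 + \sum_{i=1}^r \left(1 - \frac{1}{n_i}\right)\right).
\end{equation*}
Therefore $g(Y_1) = 0$ if and only if $2g(C_1) - 2 = \rho\St$. Combining the two equivalences yields the assertion for $F_1$. For $F_2$ the argument is verbatim the same, using \eqref{selfint-2} in place of \eqref{selfint-1} (which produces $q_i'$ instead of $q_i$) and applying Riemann--Hurwitz to the stabilizer action on $C_2$; this is exactly what the proof in the excerpt indicates by ``the proof is analogous.''

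The only genuine subtlety — and the step I would be most careful about — is the bookkeeping identifying $|H| = \rho = \mathrm{l.c.m.}(n_1,\ldots,n_r)$ and identifying the branching numbers of $C_1 \lr C_1/H$ with the integers $n_i$ appearing in the type $\type$ of $F_1$. This is where one must invoke the local analysis of Section \ref{sec:standard-iso}: a point of $C_1$ with stabilizer of order $m$ inside $H$ produces, together with $y$, a cyclic quotient singularity whose HJ-string has the corresponding $n = m$, so the branching numbers of the cover are precisely $n_1, \ldots, n_r$ and their least common multiple is $|H| = \rho$. Everything else is a direct substitution into \eqref{selfint-1}, \eqref{selfint-2} and \eqref{riemanhur}.
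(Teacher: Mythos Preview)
Your proof is correct and follows exactly the paper's (very terse) approach: reduce to $(Y_1)^2=-1$ via Proposition~\ref{selfintersection} and to $g(Y_1)=0$ via Theorem~\ref{Serrano} plus Riemann--Hurwitz. The one point you rightly flag---that $|H|=\rho=\mathrm{l.c.m.}(n_1,\ldots,n_r)$---is settled by observing that when $g(C_1/H)=0$ the generating vector for the cyclic group $H$ consists only of the $g_i$ (of orders $n_i$), so these must generate $H$ and hence $|H|=\mathrm{l.c.m.}(n_i)$; this is all that is needed for the equivalence, and the paper leaves it equally implicit.
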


The following result provide a method to construct non-minimal standard isotrivial
fibrations with arbitrarily many $(-1)$-fibres.

\begin{theorem} \label{existence}
Let $\mathcal{S}:=\big\{ \frac{q_1}{n_1}, \ldots, \frac{q_r}{n_r}
\big\} $ be a finite set of rational numbers, with $(n_i, q_i)=1$, such that
$\sum_{i=1}^r \frac{q_i}{n_i}=1$. Set $n:=\emph{l.c.m.}(n_1, \ldots,
n_r)$. Then for any $\mathfrak{q} \geq 0$ there exists a standard isotrivial
fibration $\lambda \colon S \lr T:=(C_1 \times C_2)/G$ such that
the following holds.
\begin{itemize}
\item[$(i)$] $\emph{Sing}(T)=n \times \frac{1}{n_1}(1,q_1)+ \cdots +
 n \times \frac{1}{n_r}(1, q_r);$
\item[$(ii)$] the singular fibres of the natural projection
 $\,\alpha_2 \colon S \lr C_2/G$ are exactly $\;n\;$ $(-1)$-fibres, all of type
 $\type;$ \\
\item[$(iii)$] $q(S)=\mathfrak{q}$.
\end{itemize}
\end{theorem}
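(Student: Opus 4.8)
The plan is to realize the desired configuration by the Riemann Existence Theorem (Proposition \ref{riemann ext}), choosing a group $G$ and two actions on curves $C_1$, $C_2$ so that the quotient $T = (C_1\times C_2)/G$ has exactly the prescribed singular locus and so that each reducible fibre of $\alpha_2$ is a $(-1)$-fibre. First I would pick $G$ to be a cyclic group $\mathbb{Z}_n$ (or a suitable abelian group built from the $n_i$), acting on $C_1$ with signature $(0 \,|\, m_1,\dots, m_s)$ engineered so that the non-free stabilizers are exactly the cyclic subgroups of orders $n_1,\dots,n_r$ producing singularities of type $\frac{1}{n_i}(1,q_i)$. The key numerical check here is that a local generator of a stabilizer of order $n_i$ acts on the tangent space to $C_1$ at a fixed point and on the tangent space to $C_2$ at the corresponding point by characters whose ratio gives the exponent $q_i$; by Proposition \ref{fixed-points} the number of such fixed points is controlled by $|C_G(h)|$ and the signature data, so choosing $G$ abelian makes the bookkeeping transparent. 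The hypothesis $\sum q_i/n_i = 1$ is exactly Corollary \ref{integers}, which is the compatibility condition needed for such a configuration to close up.

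Next I would arrange the action on $C_2$ so that $C_2/G$ has the prescribed genus contribution and so that the branch points of $C_2 \to C_2/G$ are exactly the $n$ points whose fibres in $\alpha_2$ become reducible, each carrying the full collection of $r$ HJ-strings of types $\frac{1}{n_i}(1,q_i)$. By Theorem \ref{Serrano}(i) the strings in the fibre over $\bar y$ correspond to branch points of $C_1 \to C_1/H$ where $H$ is the stabilizer of $y$; so I would take the stabilizer of each of the $n$ special points of $C_2$ to be all of $G$ (i.e.\ $C_2 \to C_2/G$ is totally ramified there), forcing every such fibre to contain all $r$ strings. This also forces $\mathrm{Sing}(T)$ to be $n$ copies of each $\frac{1}{n_i}(1,q_i)$, giving (i). For (iii), I would adjust the genus $\mathfrak{g}'$ of the base $C_2/G$ and/or add unramified handles: since $q(S) = g(C_1/G) + g(C_2/G)$ by Proposition \ref{invariants-S}(iii) and $g(C_1/G)=0$ by construction, taking $g(C_2/G) = \mathfrak{q}$ (via signature $(\mathfrak{q} \,|\, \underbrace{|G|,\dots,|G|}_{n})$ for the action on $C_2$) yields exactly $q(S) = \mathfrak{q}$; I must then verify via Riemann--Hurwitz \eqref{riemanhur} that a compatible genus $g(C_2)$ exists, which it does since $|G|$ divides all the relevant terms.

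Finally, to get (ii) I would invoke Proposition \ref{(-1)-fibre}: each reducible fibre $F_1$ of $\alpha_2$ is a $(-1)$-fibre precisely when $\sum q_i/n_i = 1$ (true by hypothesis) and $2g(C_1) - 2 = \rho\,\bigl(-2 + \sum (1 - 1/n_i)\bigr)$ with $\rho = n = \mathrm{l.c.m.}(n_1,\dots,n_r)$. So the real constraint is that the action of $G$ on $C_1$ must be chosen with a signature whose Riemann--Hurwitz relation forces exactly this value of $g(C_1)$; I would reverse-engineer the signature $(0\,|\,\ldots)$ on $C_1$ to hit it, using that $|G|/n_i$ is the relevant multiplicity factor. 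The main obstacle I anticipate is the simultaneous realizability: one must produce a single group $G$ and a single pair of signatures that (a) generate $G$, (b) give the correct local rotation numbers $q_i$ at every singular point — not merely the correct orders — and (c) satisfy both Riemann--Hurwitz relations with honest integer genera. Keeping $G$ abelian (cyclic when possible, or a product $\prod \mathbb{Z}_{n_i}$ otherwise) is what makes (a)–(c) checkable by hand, since then the rotation data is just additive character theory and Remark \ref{abelian-gen} tells us exactly which signatures are admissible; the bulk of the argument will be this explicit construction and the verification that the resulting $T$ has no extra singularities beyond those listed in (i).
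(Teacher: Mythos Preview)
Your approach is essentially the paper's: take $G=\mathbb{Z}_n=\langle\xi\rangle$, act on $C_1$ with signature $(0\mid n_1,\ldots,n_r)$ and on $C_2$ with signature $(\mathfrak{q}\mid n^n)$ (full stabilizer at each of the $n$ branch points), then read off the singularities and $(-1)$-fibre condition from Propositions~\ref{fixed-points} and~\ref{(-1)-fibre}. The one piece you leave as ``reverse-engineering'' is made explicit in the paper by setting $g_i=\xi^{t_i}$ with $t_i:=q_i n/n_i$: then $|g_i|=n_i$, the product relation $g_1\cdots g_r=1$ holds precisely because $\sum t_i=n\sum q_i/n_i=n$, and the rotation data works out to $\mathrm{Fix}_{C_1,1}(\xi^{t_i})$ versus $\mathrm{Fix}_{C_2,q_i}(\xi^{t_i})$, giving singularities of type $\frac{1}{n_i}(1,q_i)$ as required.
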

\begin{proof}
For all $i \in \{i, \ldots, r \}$ set $t_i:=q_in/n_i$. Set moreover
$G:= \langle \xi \, | \, \xi^n=1 \rangle \cong \mathbb{Z}_n$. Since
$(n, \, t_i)= n/n_i$, the element $\xi^{t_i}$ has order
$n/(n, \,t_i)=n_i$ in $G$. It follows that $G$ is both $(0
\; | \; n_1, \ldots, n_r)$ and $(\mathfrak{q} \; | \; n^n)$-generated, with
generating vectors given by
\begin{equation*}
\begin{split}
\mathcal{V}_1=\{g_1, \ldots, g_r \}&:=\{\xi^{t_1}, \ldots, \xi^{t_r}
\} \quad
\textrm{and}\\
\mathcal{V}_2=\{\ell_1, \ldots, \ell_n; \;\; h_1, \ldots,
h_{2\mathfrak{q}}\}&:=\{ \underbrace{\xi, \ldots, \xi}_{n \textrm{ times}};
\;\; \underbrace{\xi, \ldots, \xi}_{2\mathfrak{q} \textrm{ times}} \},
\end{split}
\end{equation*}
respectively. Therefore by Proposition \ref{riemann ext} we obtain
two $G$-covers
\begin{equation*}
 C_1 \lr C_1/G \cong \mathbb{P}^1, \quad  C_2 \lr C_2/G,
\end{equation*}
where $g(C_2/G)=\mathfrak{q}$. By using Proposition \ref{fixed-points} we see
that
\begin{itemize}
\item for all $i \in \{1, \ldots, r \}$, there are $n/n_i$ fixed
points on $C_1$ with stabilizer $\langle \xi^{t_i} \rangle \cong
\mZ_{n_i}$; if $P_i$ is the set of these fixed points, we have
\begin{equation*}
| \textrm{Fix}_{C_1, \,q}(\xi^{t_i}) \cap P_i|= \left\{
\begin{array}{ll}
 n/n_i & \textrm{if $q=1$} \\
0 & \textrm{otherwise};
\end{array} \right.
\end{equation*}
\item there are $n$ fixed points on $C _2$, whose stabilizer is
the whole $G$; for all $i \in \{1, \ldots, r \}$ we have
\begin{equation*}
| \textrm{Fix}_{C_2, \,q}(\xi^{t_i})|= \left\{
\begin{array}{ll}
n & \textrm{if $q=q_i$} \\
0 & \textrm{otherwise}.
\end{array} \right.
\end{equation*}
\end{itemize}
It follows that the standard isotrivial fibration $\lambda \colon S
\lr T =(C_1 \times C_2)/G$ has all the desired properties.
\end{proof}

In the sequel we will focus our attention on the natural projection $\alpha_2
\colon  S \lr C_2/G$; this involves no loss of generality and
similar results hold if one considers instead the projection
$\alpha_1 \colon S \lr C_1/G$. For abbreviation, we simply write
``$(-1)$-fibre" instead of
 ``$(-1)$-fibre of $\alpha_2 \colon S \lr C_2/G$".

\begin{corollary} \label{class -1 fibres}
The classification of $(-1)$-fibres in genus $\mg$ is equivalent to the
classification of pairs $(G, \mathcal{S})$, where $G$ is a finite group
and $\mathcal{S}:=\big\{ \frac{q_1}{n_1}, \ldots, \frac{q_r}{n_r}
\big\} $ is a set of rational numbers, with $(n_i, q_i)=1$ for all $i$, such that
\begin{itemize}
\item[$(i)$] $G$ acts in genus $\mg$ with rational quotient and
signature $(0 \, | \, n_1, \ldots, n_r)$;
\item[$(ii)$] $\sum_{i=1}^r \frac{q_i}{n_i}=1$.
\end{itemize}
\end{corollary}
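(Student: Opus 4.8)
This is an ``if and only if'' and the plan is to establish the two directions separately, reading each one off from material already in place: Theorem~\ref{Serrano} together with Proposition~\ref{(-1)-fibre} for one implication, and Theorem~\ref{existence} for the other.

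\emph{From a $(-1)$-fibre to a pair.} Suppose $F_1$ is a $(-1)$-fibre in genus $\mg$ of type $\type$, occurring as a reducible fibre of $\alpha_2\colon S\lr C_2/G$, where $S$ is the minimal desingularization of $(C_1\times C_2)/G$ and $g(C_1)=\mg$. Let $F_1$ lie over $\bar y\in C_2/G$ and let $H\subseteq G$ be the stabilizer of a preimage $y\in C_2$; recall that $H$ is cyclic. By Theorem~\ref{Serrano} the central component $Y_1$ of $F_1$ is isomorphic to $C_1/H$, and the $r$ $HJ$-strings of $F_1$ correspond bijectively to the $r$ branch points of the $H$-cover $C_1\lr C_1/H$, the string over the $i$-th point carrying the singularity $\frac{1}{n_i}(1,q_i)$; in particular $n_1,\dots,n_r$ are exactly the branching numbers of that cover. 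Since $Y_1$ is a $(-1)$-curve it is rational, so $C_1/H\cong\mathbb P^1$ and $H$ acts on the genus-$\mg$ curve $C_1$ with rational quotient and signature $(0\,|\,n_1,\dots,n_r)$: this is condition~$(i)$, the relevant group being $H$. Moreover, by Proposition~\ref{selfintersection} one has $(Y_1)^2=-\sum_{i=1}^r q_i/n_i$, which equals $-1$ because $Y_1$ is a $(-1)$-curve (this is the first condition of Proposition~\ref{(-1)-fibre}), so $\sum_{i=1}^r q_i/n_i=1$, which is condition~$(ii)$.

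\emph{From a pair to a $(-1)$-fibre.} Conversely, let $(G,\mathcal S)$ satisfy $(i)$ and $(ii)$. Condition~$(ii)$ is precisely the hypothesis of Theorem~\ref{existence}, and applying that theorem to the set $\mathcal S$ (with any $\mathfrak q\ge 0$) yields a standard isotrivial fibration each of whose $\alpha_2$-singular fibres is a $(-1)$-fibre of type $\type$, built from a cyclic group of order $n=\mathrm{l.c.m.}(n_1,\dots,n_r)$ acting on $C_1$ with rational quotient and signature $(0\,|\,n_1,\dots,n_r)$. It remains to see that these are $(-1)$-fibres \emph{in genus $\mg$}, i.e.\ that $g(C_1)=\mg$. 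By Riemann--Hurwitz $g(C_1)$ is determined by $n$ and the signature; on the other hand a cyclic group admitting the signature $(0\,|\,n_1,\dots,n_r)$ necessarily has order $\mathrm{l.c.m.}(n_i)$, and in any case $|G|$ is pinned down by $\mg$ and the signature via \eqref{riemanhur}, so the relation $2\mg-2=|G|\St$ of $(i)$ forces $g(C_1)=\mg$. Hence the construction delivers a $(-1)$-fibre in genus $\mg$ of type $\mathcal S$, as required.

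\emph{Where the work lies.} The equivalence is meant at the level of discrete invariants: a $(-1)$-fibre is recorded by the pair $(\mg,\mathcal S)$, and the two steps above identify this with a pair $(G,\mathcal S)$ satisfying $(i)$ and $(ii)$. I expect the only genuine work to be the bookkeeping that turns Theorem~\ref{Serrano}'s geometric picture of $F_1$ into the arithmetic pair $(G,\mathcal S)$: one must check that the group ``acting in genus $\mg$'' is the cyclic point-stabilizer $H$ rather than the full group of the fibration, that the numerators $q_i$ of the singularity types $\frac{1}{n_i}(1,q_i)$ --- which a priori arise by comparing the rotation constants of the $G$-actions on $C_1$ and on $C_2$ at their common fixed points --- are exactly those occurring in $\mathcal S$, and that Proposition~\ref{selfintersection} converts the geometric condition $(Y_1)^2=-1$ into $\sum_i q_i/n_i=1$. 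Once this dictionary, and the genus computation above, are set up, the corollary follows by combining Theorem~\ref{Serrano}, Proposition~\ref{(-1)-fibre} and Theorem~\ref{existence}.
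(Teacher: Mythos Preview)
Your proof is correct and follows exactly the paper's approach: the paper's own proof reads ``Immediate by Proposition~\ref{(-1)-fibre} and Theorem~\ref{existence},'' and you have simply unpacked what that sentence means, supplementing it with Theorem~\ref{Serrano} and Proposition~\ref{selfintersection} to make the forward direction explicit. One small remark: in your converse direction, the genus-matching argument (``$|G|$ is pinned down by $\mathfrak g$ and the signature, hence $g(C_1)=\mathfrak g$'') is only airtight once you know $|G|=\mathrm{l.c.m.}(n_i)$, which you correctly observe holds when $G$ is cyclic; since the group arising from any actual $(-1)$-fibre is the cyclic point-stabilizer $H$, this is the relevant case and is exactly how the corollary is applied later in the paper.
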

\begin{proof}
Immediate by Proposition \ref{(-1)-fibre} and Theorem
\ref{existence}.
\end{proof}

\begin{corollary} \label{g=0}
The following are equivalent:
\begin{itemize}
\item[$(i)$] $F$ is a $(-1)$-fibre in genus $\mg=0$;
\item[$(ii)$] $F$ is a reducible fibre in genus $\mg=0$;
\item[$(iii)$] $F$ is a reducible fibre of type
 $\big(\frac{q}{n}, \, \frac{n-q}{n} \big)$ whose central component is rational.
\end{itemize}
\end{corollary}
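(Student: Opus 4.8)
The plan is to prove the chain of implications $(iii) \Rightarrow (i) \Rightarrow (ii) \Rightarrow (iii)$, using the structural results on reducible fibres from Section \ref{sec:standard-iso} together with the characterization of $(-1)$-fibres in Proposition \ref{(-1)-fibre}. The implication $(i) \Rightarrow (ii)$ is trivial, since by definition a $(-1)$-fibre is a reducible fibre whose central component happens to be a $(-1)$-curve. The substance of the corollary lies in the other two implications, which force rigid numerical constraints when $\mg = g(C_1) = 0$.

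First I would handle $(ii) \Rightarrow (iii)$. Suppose $F$ is a reducible fibre in genus $\mg=0$, say of type $\type$. Its central component $Y$ is isomorphic to $C_1/H$ by Theorem \ref{Serrano}(iii), and since $g(C_1)=0$ we get $g(Y)=0$, so $Y$ is rational; moreover $H \subseteq \textrm{Aut}(\mathbb{P}^1)$ is cyclic, acting with some signature $(0 \, | \, \ell_1, \ldots, \ell_s)$. By Remark \ref{card H} we have $r \geq 2$. The Riemann--Hurwitz relation \eqref{riemanhur} applied to the $H$-cover $C_1 \lr C_1/H \cong \mathbb{P}^1$ reads $-2 = |H|\bigl(-2 + \sum_{j}(1 - 1/\ell_j)\bigr)$, which for a cyclic group forces exactly two branch points, both with branching number $|H|$; hence $r = 2$ and $n_1 = n_2 = |H|$. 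Writing $n := n_1 = n_2$, the two $HJ$-strings are of types $\frac{1}{n}(1,q_1)$ and $\frac{1}{n}(1,q_2)$. It then remains to identify $q_2$ with $n - q_1$: this follows because the two fixed points of the generator of $H \cong \mathbb{Z}_n$ on $\mathbb{P}^1$ carry reciprocal rotation constants, so the local actions at the two corresponding singular points of $T$ are $\frac{1}{n}(1,q)$ and $\frac{1}{n}(1,-q) = \frac{1}{n}(1,n-q)$. Thus $F$ is of type $\bigl(\frac{q}{n}, \frac{n-q}{n}\bigr)$ with $Y$ rational, which is $(iii)$.

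Next, $(iii) \Rightarrow (i)$. Assume $F$ is of type $\bigl(\frac{q}{n},\frac{n-q}{n}\bigr)$ with $Y$ rational, i.e. $g(C_1) = 0 = \mg$. I invoke Proposition \ref{(-1)-fibre}: $F$ is a $(-1)$-fibre precisely when $\sum \frac{q_i}{n_i} = 1$ and $2g(C_1) - 2 = \rho\bigl(-2 + \sum(1 - 1/n_i)\bigr)$ with $\rho = \textrm{l.c.m.}(n_1,n_2) = n$. The first condition is immediate, $\frac{q}{n} + \frac{n-q}{n} = 1$. For the second, with $r = 2$ and $n_1 = n_2 = n$ the right-hand side is $n\bigl(-2 + 2(1 - 1/n)\bigr) = n(-2/n) = -2 = 2g(C_1) - 2$, which holds since $g(C_1) = 0$. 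Hence both conditions of Proposition \ref{(-1)-fibre} are satisfied and $F$ is a $(-1)$-fibre in genus $\mg = 0$, completing the cycle.

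The main obstacle is the identification $q_2 = n - q_1$ in the step $(ii) \Rightarrow (iii)$: one must argue carefully that, once Riemann--Hurwitz forces $H \cong \mathbb{Z}_n$ to act on $\mathbb{P}^1$ with exactly two fixed points, the rotation constants at those two points are mutually inverse, so that the induced cyclic quotient singularities on $T = (C_1 \times C_2)/G$ along the fibre are dual to each other. This is where the local-analytic description from Section \ref{sec:prel} (the rotation constant $\xi_x(h)$ and the relation between $\frac{1}{n}(1,q)$ and $\frac{1}{n}(1,n-q)$, equivalently $\frac{q}{n}$ and $\frac{n-q}{n}$) is essential; everything else is a short Riemann--Hurwitz count.
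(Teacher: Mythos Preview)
Your proof is correct. For $(i)\Rightarrow(ii)$ and $(iii)\Rightarrow(i)$ it matches the paper's argument essentially verbatim. The difference is in $(ii)\Rightarrow(iii)$, where you establish $q_2=n-q_1$ by a direct local analysis: the two fixed points of a cyclic $\mathbb{Z}_n$-action on $\mathbb{P}^1$ carry reciprocal rotation constants, hence the two singularities on the fibre over $\bar{y}$ are of types $\frac{1}{n}(1,q)$ and $\frac{1}{n}(1,n-q)$. The paper takes a shorter, more numerical route: once Riemann--Hurwitz gives $r=2$ and $n_1=n_2=n$, it invokes Proposition~\ref{selfintersection}, which says $(Y_1)^2=-\bigl(\tfrac{q_1}{n}+\tfrac{q_2}{n}\bigr)\in\mathbb{Z}$; since each summand lies strictly in $(0,1)$ the sum must equal $1$, forcing $q_2=n-q_1$. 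Your approach is more geometric and self-contained, and it makes transparent \emph{why} the two strings are dual; the paper's approach exploits a result already on the shelf and sidesteps any convention check about which end of the $HJ$-string meets $Y_1$ (a point you rightly flag as the main obstacle). Both are valid.
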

\begin{proof} $(i)\Rightarrow (ii)$. Obvious. \\
$(ii) \Rightarrow (iii)$. Assume $g(C_1)=0$. For all $n \geq 2$,
the cyclic group $\mathbb{Z}_n$ acts on
$\mathbb{P}^1 $, and the only possible signature is $(0 \;| \;n,n)$
(\cite[p. 9]{Bre00}). Therefore every reducible fibre of $\alpha_2
\colon S \lr C_2/G$ is of
type $\big( \frac{q_1}{n}, \frac{q_2}{n} \big)$ for some positive integers $n$, $q_1$, $q_2$.
On the other hand we have seen that
$\frac{q_1}{n}+\frac{q_2}{n}$ must be integer, so
 $F$ is of type $\big( \frac{q}{n},
\frac{n-q}{n} \big)$. Finally, the central component of $F$ is rational since it
is a quotient of $C_1$ (Theorem \ref{Serrano}).\\
$(iii) \Rightarrow (i)$. This follows from Proposition \ref{(-1)-fibre}.
\end{proof}

Corollary \ref{g=0} shows that there are infinitely many types of
$(-1)$-fibres in genus $\mg=0$. On the other hand, for all genera $\mg \geq
1$ there are only finitely many types, since there are only
finitely many cyclic groups of automorphisms; the cases where
$\mg=1, \, 2, \, 3$ are described in detail in Appendix A.

\begin{example} \label{Example 1-a}
Let $n \geq 2$ be any positive integer and take
$\mathcal{S} = \big\{ \frac{1}{n}, \, \frac{n-1}{n} \big\}$, $\mathfrak{q}=1$.
Using the construction given in Theorem \ref{existence}, we obtain a
standard isotrivial fibration $\lambda \colon S \lr T=(C_1 \times
C_2)/G$ with
\begin{equation*}
g(C_1)=0, \quad 2g(C_2)-2=n^2-n, \quad \textrm{Sing}(T)=n
\times \frac{1}{n}(1,\,1)+ n \times \frac{1}{n}(1, \, n-1).
\end{equation*}
For all $n$, $S$ is a ruled surface whose invariants are
$p_g(S)=0$, $q(S)=1$, $K_S^2=-n^2$. Hence every minimal model $\widehat{S}$
 of $S$ satisfies $K_{\widehat{S}}^2=0$.
\end{example}

\begin{example} \label{Example-1}
Take $\mathcal{S}=\bigg\{\underbrace{\frac{1}{n}, \ldots,
\frac{1}{n} }_{\textrm{$n$ times}}\bigg\}$ and $\mathfrak{q}=1$.
We obtain a
standard isotrivial fibration with
\begin{equation*}
2g(C_1)-2=n^2-3n, \quad 2g(C_2)-2=n^2-n, \quad \textrm{Sing}(T)=n^2
\times \frac{1}{n}(1,\,1).
\end{equation*}
Thus Proposition \ref{invariants-S} yields
\begin{equation*}
\begin{split}
K_S^2 &=n^3-4n^2+2n, \quad e(S)=n^3-2n^2+2n, \\
\chi(\mO_S)&=\frac{n(n-1)(n-2)}{6}, \quad q(S)=1.
\end{split}
\end{equation*}
For $n=2$, $S$ is a ruled surface. Now we assume $n \geq 3$. Since
$\mathfrak{q} >0$, the minimal model $\widehat{S}$ of $S$ is
obtained by contracting $n$ disjoint $(-1)$-curves. Hence its
invariant are
\begin{equation*}
K_{\widehat{S}}^2 =n(n-1)(n-3), \quad e(\widehat{S})=n(n-1)^2.
\end{equation*}
For $n =3$ we obtain an elliptic surface with
$\textrm{kod}(\widehat{S})=1$ and
$p_g(\widehat{S})=q(\widehat{S})=1$, whose elliptic fibration
$\alpha_2 \colon S \lr C_2/G$ contains exactly three singular elements, all
 of type $IV(\widetilde{A}_2)$ according to Kodaira classification
(\cite[Chapter V]{BPV}); for $n \geq 4$ we have
a surface of general type. Taking $\mathfrak{q}> 1$ leads to similar results:
for $n=3$ the surface $\widehat{S}$ is elliptic and satisfies
$p_g(\widehat{S})=q(\widehat{S})=\mathfrak{q}$, whereas for $n \geq 4$ it is
of general type.
\end{example}

\begin{remark}
Under the assumptions of Theorem $\ref{existence}$, one may ask
whether there exists a standard isotrivial fibration such that
$\emph{Sing}(T)=\frac{1}{n_1}(1, \, q_1) + \cdots +\frac{1}{n_r}(1,
\, q_r)$. In general the answer is negative, in fact further
necessary conditions are
\begin{equation*}
\frac{1}{3} \sum_{i=1}^r B\bigg(\frac{q_i}{n_i} \bigg) \in
\mathbb{Z} \quad \emph{and} \quad \sum_{i=1}^r \frac{q'_i}{n_i} \in
\mathbb{Z},
\end{equation*}
see Corollaries $\ref{chi-S}$ and $\ref{integers}$. For example,
there are no standard isotrivial
 fibrations with $\emph{Sing}(T)= 3 \times \frac{1}{3}(1, \,1)$
 or with $\emph{Sing}(T)= 2 \times \frac{1}{5}(1, \,1)+ \frac{1}{5}(1,3)$.
 In some cases, however, the question above has an affirmative
answer. For instance, in \cite{MiPol08} there are examples of
standard isotrivial fibrations with $\emph{Sing}(T)=4 \times
\frac{1}{4}(1,1)$ and with $\emph{Sing}(T)= \frac{1}{7}(1,1)
+\frac{1}{7}(1,2)+ \frac{1}{7}(1,4)$.
\end{remark}

\section{The relatively minimal model}
\label{sec:relatively minimal}

\subsection{Contractible components} \label{sub:contractible-comp}
Let $\lambda \colon S \lr T=(C_1 \times C_2)/G$ be a standard
isotrivial fibration. If $F$ is any $(-1)$-fibre of $\alpha_2 \colon S \lr C_2/G$,
with $\mathscr{H}(F)=\{\mE_1, \cdots, \mE_r \}$, we consider the following procedure:
\begin{itemize}
\item[\emph{Step} $0$]: contract the central component $Y$ of $F$;
\item[\emph{Step} $1$]: make all possible contractions in the image of $\mE_1$;
\item[\emph{Step} $2$]: make all possible contractions in the image of
$\mE_2$;
\item[ ] $\cdots$
\item[\emph{Step} $r$]: make all possible contractions in the image of $\mE_r$;
\item[\emph{Step} $r$]$+1:$ go back to Step $1$ and repeat.
\end{itemize}
Applying this algorithm to all $(-1)$-fibres, we obtain a relative
minimal fibration $\hat{\alpha}_2 \colon \widehat{S} \lr C_2/G$.
If $g(C_1) \geq 1$ this is the unique relative minimal model of
$\alpha_2$ (\cite[Chapter III, Proposition 8.4]{BPV}); by abuse of
terminology, we will say that $\hat{\alpha}_2$ is \emph{the}
relative minimal model of $\alpha_2$ also when $g(C_1)=0$. If
$g(C_2/G) \geq 1$, then $\widehat{S}$ is obviously a minimal
surface. If $g(C_2/G)=0$ this is not true in general, as following
example illustrates.
\begin{example} \label{no-minimal}
The group $G=\textrm{PSL}_2(\mathbb{F}_7)$ has
order $168$ and it is
$(0 \, | \, 2,3,7)$-generated (\cite[p. 265-266]{JS87}). Then there exists a genus
$3$ curve $C$ and
a $G$-cover $C \lr \mathbb{P}^1$, branched in three
points with branching numbers $2$, $3$ and $7$, respectively. Set $C_1=C_2=C$ and
consider the standard isotrivial fibration $\lambda \colon S \lr T=(C_1 \times C_2)/G$;
standard computations as in \cite{MiPol08} show that
\begin{equation*}
\textrm{Sing}(T)=4 \times \frac{1}{2}(1, \, 1)+\frac{1}{3}(1, \, 1)+
\frac{1}{3}(1, \, 2)+ \frac{1}{7}(1, \, 1)+ \frac{1}{7}(1, \, 2)
+ \frac{1}{7}(1, \,4).
\end{equation*}
By using Proposition \ref{invariants-S} we obtain
\begin{equation*}
K_S^2=-6, \quad e(S)=18, \quad q(S)=0,
\end{equation*}
hence $\chi(\mO_S)=1$ and $p_g(S)=0$.
The natural projection $\alpha_2 \colon S \lr C/G \cong \mathbb{P}^1$
contains precisely three reducible fibres $F_2$, $F_3$, $F_7$ and moreover:
\begin{itemize}
\item $F_2$ is of type $\big(\frac{1}{2}, \, \frac{1}{2}, \,
\frac{1}{2}, \,\frac{1}{2} \big)$;
\item $F_3$ is of type $\big(\frac{1}{3}, \, \frac{2}{3} \big)$;
\item $F_7$ is of type $\big(\frac{1}{7}, \, \frac{2}{7}, \,
\frac{4}{7} \big)$.
\end{itemize}
\end{example}
Out of these, the unique $(-1)$-fibre
is $F_7$, in fact the central components of $F_2$ and $F_3$ are not rational
curves. The surface $\widehat{S}$ is therefore obtained by blowing down two curves
(see Example \ref{7} below), hence $K_{\widehat{S}}^2=-4$
and consequently  $\widehat{S}$ is \emph{not} a minimal surface. It is no difficult
to check that in this example $\textrm{kod}(S)=- \infty$. \\ \\
Now let $\lambda \colon S \lr T=(C_1 \times C_2)/G$ be a standard
isotrivial fibration and $\hat{\alpha}_2 \colon \widehat{S} \lr
C_2/G$ the relative minimal model of $\alpha_2$. Let $F$ be a
reducible fibre of $\alpha_2$ and let $\mE= \bigcup_{i=1}^k Z_i \in
\mathscr{H}(F)$ be a $HJ$-string contained in $F$. We say that an
irreducible component $Z_i \subset \mE$ is \emph{contractible} if it
is contracted by the natural map $\pi \colon S \lr \widehat{S}$. By
definition it follows that if  both $Z_i$ and $Z_j$ are
contractible, then $Z_l$ is also contractible for any $i \leq l \leq
j$. Now we define
\begin{equation*}
\cF:= \textrm{number of irreducible components of $F$ contracted by
$\pi$}.
\end{equation*}
Obviously, $\cF \geq 0$; moreover $\cF>0$ if and only if $F$ is a
$(-1)$-fibre, and $\cF=1$ if and only if $F$ is a $(-1)$-fibre and
none of its $HJ$-strings contains contractible components.
\begin{example} \label{7}
If $F$ is a $(-1)$-fibre of type $\big( \frac{1}{3}, \, \frac{1}{3},
\, \frac{1}{3} \big)$, then $\cF=1$. If $F$ is a $(-1)$-fibre of
type $\big( \frac{1}{7}, \, \frac{2}{7}, \, \frac{4}{7} \big)$, then
$\cF=2$.
\end{example}
For any $\tau \in C_2/G$, let $F_{\tau}$ and $(F_{\tau})_{\textrm{red}}$
be the fibre and the reduced fibre of $\alpha_2 \colon S \lr C_2/G$ over $\tau$,
respectively, and set
\begin{equation*}
\begin{split}
\textrm{Crit}(\alpha_2)&:= \{ \tau \in C_2/G \;\; | \;\; F_{\tau}
\textrm{ is singular} \};\\
\mathcal{R}(\alpha_2)&:=\{ \tau \in
\textrm{Crit}(\alpha_2) \;\; | \;\;
(F_{\tau})_{\textrm{red}} \textrm{ is smooth} \}; \\
\textrm{Crit}(\alpha_2)'&:=\textrm{Crit}(\alpha_2) \setminus
\mathcal{R}(\alpha_2)= \{\tau \in C_2/G \;\; | \;\; F_{\tau}
\textrm{ is reducible} \}.
\end{split}
\end{equation*}
Moreover, given any reducible fibre $F$, let us define
\begin{equation*}
\delta(F):=\frac{1}{3} \sum_{\mE \in \mathscr{H}(F)}B(\mE)  - \cF.
\end{equation*}
\begin{example}
If $F$ is of type $\bigg(\underbrace{\frac{1}{n}, \ldots,
\frac{1}{n} }_{\textrm{$n$ times}}\bigg)$, with $n \geq 3$, then
$\delta(F)= \frac{1}{3}(n^2-1)$ if $F$ is a $(-1)$-fibre and $\delta(F)=\frac{1}{3}(n^2+2)$
otherwise. If $n=2$ then $\delta(F)=0$ if $F$ is a $(-1)$-fibre and $\delta(F)=2$
otherwise.
\end{example}
The rational number $\delta(F)$ plays an important role in the sequel,
because of the following result.
\begin{proposition}
With the above notations we have
\begin{equation} \label{eq:chi-S-hat}
K_{\widehat{S}}^2 = 8 \chi (\mO_{\widehat{S}})- \sum_{\tau \in
\emph{Crit}(\alpha_2)' } \delta(F_\tau).
\end{equation}
\end{proposition}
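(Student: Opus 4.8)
The plan is to start from Corollary \ref{chi-S}, which gives $K_S^2 = 8\chi(\mO_S) - \frac{1}{3}\sum_{x \in \textrm{Sing }T}B_x$ for the (possibly non-minimal) surface $S$, and then track how both sides change as we pass from $S$ to $\widehat{S}$ by contracting the $\cF$ components of each $(-1)$-fibre. First I would reorganize the singularity sum on the right-hand side of \eqref{eq:chi-S}: every cyclic quotient singularity $x \in T$ lies on a unique $HJ$-string which is contained in a unique reducible fibre $F$ of $\alpha_2$ (by Theorem \ref{Serrano}, and using that $\mathscr{H}(F)$ runs over exactly the strings of $F$). Hence $\sum_{x \in \textrm{Sing }T} B_x = \sum_{\tau \in \textrm{Crit}(\alpha_2)'}\sum_{\mE \in \mathscr{H}(F_\tau)} B(\mE)$, so that Corollary \ref{chi-S} reads
\begin{equation*}
K_S^2 = 8\chi(\mO_S) - \frac{1}{3}\sum_{\tau \in \textrm{Crit}(\alpha_2)'}\sum_{\mE \in \mathscr{H}(F_\tau)} B(\mE) = 8\chi(\mO_S) - \sum_{\tau}\big(\delta(F_\tau) + \cF[\tau]\big),
\end{equation*}
where I abbreviate $\cF[\tau]$ for $\mathfrak{c}(F_\tau)$, by the very definition of $\delta$.

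Next I would compare $K_S^2$ with $K_{\widehat{S}}^2$ and $\chi(\mO_S)$ with $\chi(\mO_{\widehat{S}})$. Since $\pi \colon S \lr \widehat{S}$ is a composition of blow-downs of $(-1)$-curves, $\chi$ is a birational invariant of smooth projective surfaces, so $\chi(\mO_S) = \chi(\mO_{\widehat{S}})$. For the self-intersection of the canonical class, each blow-down of a $(-1)$-curve increases $K^2$ by exactly $1$, so $K_{\widehat{S}}^2 = K_S^2 + N$, where $N$ is the total number of $(-1)$-curves contracted in passing from $S$ to $\widehat{S}$. The key combinatorial claim to verify is that $N = \sum_{\tau \in \textrm{Crit}(\alpha_2)'}\cF[\tau]$, i.e. that the total number of blow-downs equals the total number of components of reducible fibres contracted by $\pi$. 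This is essentially the content of the contraction algorithm in Section \ref{sub:contractible-comp}: the only curves that get contracted are the central component $Y$ of each $(-1)$-fibre together with the contractible components inside its $HJ$-strings — and $\mathfrak{c}(F)$ counts precisely these. (Curves in fibres $F_\tau$ with $(F_\tau)_{\textrm{red}}$ smooth, i.e. $\tau \in \mathcal R(\alpha_2)$, are irreducible multiple fibres and never contractible; and a reducible fibre that is not a $(-1)$-fibre has $\cF = 0$ and contributes nothing.) One should also note that each such contraction is indeed the blow-down of a genuine $(-1)$-curve at the stage it is performed, which follows because contractibility is defined exactly as "contracted by $\pi$", and a composition of blow-downs contracts a prescribed set of curves iff each is a $(-1)$-curve when its turn comes.

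Combining the two displays: $K_{\widehat{S}}^2 = K_S^2 + \sum_\tau \cF[\tau] = 8\chi(\mO_S) - \sum_\tau \delta(F_\tau) - \sum_\tau \cF[\tau] + \sum_\tau \cF[\tau] = 8\chi(\mO_{\widehat{S}}) - \sum_{\tau \in \textrm{Crit}(\alpha_2)'}\delta(F_\tau)$, which is \eqref{eq:chi-S-hat}. The main obstacle I anticipate is the bookkeeping step $N = \sum_\tau \cF[\tau]$: one must be careful that the "make all possible contractions" phrasing of the algorithm really contracts each component exactly once and that no extra $(-1)$-curves appear on $\widehat{S}$ coming from outside the reducible fibres (this uses that the central components of $(-1)$-fibres are, when $g(C_1)=\mg$ and $g(C_2/G)\geq 1$, the only $(-1)$-curves, and in the $g(C_2/G)=0$ case that the relative minimal model still only contracts fibre components — the definition of $\mathfrak{c}(F)$ via $\pi \colon S \to \widehat S$ sidesteps this by definition). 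Everything else is a direct substitution once Corollary \ref{chi-S} is rewritten fibre-by-fibre.
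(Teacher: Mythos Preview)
Your proof is correct and follows exactly the approach the paper intends: the paper's own proof is the single line ``Immediate by using \eqref{eq:chi-S} and the definition of $\delta(F)$'', and you have simply unpacked this by regrouping $\sum_x B_x$ fibre-by-fibre, using $\chi(\mO_S)=\chi(\mO_{\widehat{S}})$ and $K_{\widehat{S}}^2=K_S^2+\sum_\tau \mathfrak{c}(F_\tau)$. The bookkeeping identity $N=\sum_\tau \mathfrak{c}(F_\tau)$ that you flag is not an obstacle at all, since $\mathfrak{c}(F)$ is \emph{defined} as the number of components of $F$ contracted by $\pi$, and the algorithm contracts only components of reducible fibres of $\alpha_2$.
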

\begin{proof}
Immediate by using \eqref{eq:chi-S} and the definition of
$\delta(F)$.
\end{proof}

\begin{remark}
S.L. Tan pointed out that one has the equality
\begin{equation*}
\delta(F)=\frac{1}{3}(2c_2(F)-c_1^2(F)),
\end{equation*}
where $c_1^2(F)$ and $c_2(F)$ are the invariants defined in
\cite{Tan96}.
\end{remark}

The behaviour of $\delta(F)$ when $F$ is not a $(-1)$-fibre is quite
simple.
\begin{lemma} \label{no (-1)-fibre}
Let $F$ be a reducible fibre which is \emph{not} a $(-1)$
fibre. Then $\delta(F) \geq 2$ and equality holds if and only if $F$
 is of type $\big( \frac{1}{2}, \, \frac{1}{2} \big)$.
\end{lemma}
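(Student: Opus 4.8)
The plan is to reduce the statement to two elementary facts already established: a reducible fibre carries at least two $HJ$-strings (Remark \ref{card H}), and every $HJ$-string $\mE$ satisfies $B(\mE) \geq 3$, with equality exactly when $\mE$ is a node (Remark \ref{geq 3}).

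First I would observe that, since $F$ is not a $(-1)$-fibre, one has $\cF = 0$, because $\cF > 0$ holds if and only if $F$ is a $(-1)$-fibre. Hence the definition of $\delta$ simplifies to
\[
\delta(F) = \frac{1}{3} \sum_{\mE \in \mathscr{H}(F)} B(\mE).
\]
Combining Remark \ref{card H}, which gives $|\mathscr{H}(F)| \geq 2$, with Remark \ref{geq 3}, which gives $B(\mE) \geq 3$ for each $\mE$, one immediately obtains
\[
\delta(F) \;\geq\; \frac{1}{3}\cdot 2 \cdot 3 \;=\; 2.
\]

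For the equality case I would argue as follows. If $\delta(F) = 2$, then the chain of inequalities above must be an equality at every stage, so $\mathscr{H}(F)$ has exactly two elements and $B(\mE) = 3$ for both of them; by the equality clause of Remark \ref{geq 3}, each such $\mE$ satisfies $\frac{q}{n} = \frac{1}{2}$, i.e.\ is of type $\frac{1}{2}(1,1)$, whence $F$ is of type $\big( \frac{1}{2}, \, \frac{1}{2} \big)$. Conversely, if $F$ is a reducible non-$(-1)$-fibre of type $\big( \frac{1}{2}, \, \frac{1}{2} \big)$, the same formula yields $\delta(F) = \frac{1}{3}(3+3) = 2$. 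I do not expect any genuine obstacle here; the only point worth recording is that the equality case is non-vacuous, since by Proposition \ref{selfintersection} the central component $Y$ of such a fibre has $Y^2 = -1$, and it is a $(-1)$-curve precisely when $Y$ is rational, so non-$(-1)$-fibres of type $\big( \frac{1}{2}, \, \frac{1}{2} \big)$ do occur (for instance when $g(C_1) \geq 1$ and the relevant quotient $C_1/H$ has positive genus).
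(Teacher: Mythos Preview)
Your proof is correct and follows essentially the same approach as the paper: both use $\cF=0$ together with Remarks \ref{card H} and \ref{geq 3} to obtain $\delta(F)=\tfrac{1}{3}\sum B(\mE)\geq \tfrac{1}{3}\cdot 2\cdot 3=2$, and then read off the equality case. Your additional remark on non-vacuity of the equality case is a harmless extra observation not present in the paper.
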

\begin{proof}
Since $F$ is not a $(-1)$-fibre we have $\cF=0$; moreover
$\mathscr{H}(F)$ contains at least two $HJ$-strings (Remark
\ref{card H}), so Remark \ref{geq 3} yields
\begin{equation*}
\delta(F) = \frac{1}{3} \sum_{\mE \in \mathscr{H}(F)}B(\mE) \geq 2
\end{equation*}
and equality holds if and only if $\mathscr{H}(F)$ contains exactly
two $HJ$-strings, both of type  $\frac{1}{2}(1,1)$.
\end{proof}

Now we start the analysis of the case where $F$ is a $(-1)$-fibre.
If $\mE \in \mathscr{H}(F)$ is a $HJ$-string of type $\frac{1}{n}(1,
\,q)$, with $\frac{n}{q}=[b_1, \ldots, b_k]$, we define
$b_i(\mE):=b_i$ for all $1 \leq i \leq k$. In particular,
$-b_1(\mE)=-\lceil \frac{n}{q}\rceil$ equals the self-intersection
of the unique curve in $\mE$ which meets the central component $Y$
of $F$.

\begin{lemma} \label{cardinality}
Assume that $F$ is a $(-1)$-fibre of $\alpha_2 \colon S \lr C_2/G$
and set $\mathscr{H}(F)=\{\mE_1,
\ldots, \mE_r \}$. Then
\begin{itemize}
\item[$(i)$] $\cF=1$ if and only if $b_1(\mE_i) \geq 3$ for all
$i;$ \item[$(ii)$] the set $\{i \; | \;b_1(\mE_i)=2 \}$ has
cardinality at most two, and it has cardinality two if and only if
$F$ is of type $\big(\frac{1}{2}, \frac{1}{2} \big)$. If this
happens, then $S$ is ruled$;$ \item[$(iii)$] if $r=2$ and $F$ is
not of type $\big( \frac{1}{2}, \, \frac{1}{2} \big)$,  we may
assume $b_1(\mE_1)=2$ and $b_1(\mE_2) \geq 3$.
\end{itemize}
\end{lemma}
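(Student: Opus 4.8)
The plan is to run the contraction algorithm of Subsection~\ref{sub:contractible-comp}, following only what happens near the central component $Y$ of $F$, and then to read off $(ii)$ and $(iii)$ from the single relation $\sum_{i=1}^r\frac{q_i}{n_i}=1$ supplied by Proposition~\ref{(-1)-fibre}. Recall that, $F$ being a $(-1)$-fibre, $Y$ is a smooth rational $(-1)$-curve meeting each $\mE_i$ transversally at one point, which lies on the end component $Z^{(i)}_1\subset\mE_i$ of self-intersection $-b_1(\mE_i)$, and that $b_1(\mE_i)=\lceil n_i/q_i\rceil$. The first step of the algorithm blows down $Y$; I would note that, since $Y\cdot Z^{(i)}_1=1$ and the strings $\mE_i$ are pairwise disjoint, this raises the self-intersection of each $Z^{(i)}_1$ by exactly $1$ and leaves every other component of $F$ unchanged. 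So, immediately after this blow-down, the image of $Z^{(i)}_1$ has self-intersection $-b_1(\mE_i)+1$, while all the other components of $F$ still have self-intersection $\le -2$.

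Part $(i)$ then follows at once. If $b_1(\mE_i)\ge 3$ for every $i$, no component of the fibre is a $(-1)$-curve after the blow-down, so the remaining steps of the algorithm contract nothing and $\cF=1$. Conversely, if $b_1(\mE_i)=2$ for some $i$, the image of $Z^{(i)}_1$ is a $(-1)$-curve lying in a fibre, hence at least one string component of $F$ gets contracted by $\pi$ and $\cF\ge 2$; alternatively, one invokes the characterisation of $\cF=1$ recalled just before the lemma.

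For $(ii)$ and $(iii)$ I would first record the dictionary: $b_1(\mE_i)=\lceil n_i/q_i\rceil=2$ iff $1<n_i/q_i\le 2$, that is (since $q_i<n_i$ always) iff $\frac{q_i}{n_i}\ge\frac12$, while $b_1(\mE_i)\ge 3$ iff $\frac{q_i}{n_i}<\frac12$. Put $I:=\{i\mid b_1(\mE_i)=2\}$. Since $\sum_{i=1}^r\frac{q_i}{n_i}=1$ with all summands positive, one gets $1\ge\sum_{i\in I}\frac{q_i}{n_i}\ge\frac{|I|}{2}$, so $|I|\le 2$; and if $|I|=2$ this chain must be an equality, forcing $\frac{q_i}{n_i}=\frac12$ for the two indices in $I$ and $\sum_{i\notin I}\frac{q_i}{n_i}=0$, i.e.\ $r=2$ and $F$ of type $\big(\frac12,\frac12\big)$ (the converse being obvious). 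This is the combinatorial content of $(ii)$. For the ruledness claim, when $F$ is of type $\big(\frac12,\frac12\big)$ one has $\rho:=\textrm{l.c.m.}(n_1,n_2)=2$, so the second condition of Proposition~\ref{(-1)-fibre} reads $2g(C_1)-2=2\big(-2+\frac12+\frac12\big)=-2$, i.e.\ $g(C_1)=0$; since the general fibre of $\alpha_2$ is isomorphic to $C_1\cong\mathbb{P}^1$, the surface $S$ carries a $\mathbb{P}^1$-fibration and is therefore ruled. Finally for $(iii)$: if $r=2$ and $F$ is not of type $\big(\frac12,\frac12\big)$, then $\frac{q_1}{n_1}+\frac{q_2}{n_2}=1$ with $\frac{q_1}{n_1}\neq\frac{q_2}{n_2}$, so one of them, say $\frac{q_1}{n_1}$, is $>\frac12$ and the other is $<\frac12$; by the dictionary, $b_1(\mE_1)=2$ and $b_1(\mE_2)\ge 3$.

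I expect the only genuinely delicate point to be the bookkeeping of the first blow-down: one must check that contracting $Y$ modifies the self-intersections of exactly the curves $Z^{(i)}_1$, and of each by precisely $+1$, so that the algorithm produces a further contraction exactly when some $b_1(\mE_i)$ equals $2$. Once this is granted, the rest is elementary arithmetic with $\sum\frac{q_i}{n_i}=1$ and the two conditions of Proposition~\ref{(-1)-fibre}.
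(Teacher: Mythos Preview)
Your proof is correct and follows essentially the same approach as the paper: the analysis of the first blow-down for $(i)$, and the arithmetic with $\sum_{i=1}^r\frac{q_i}{n_i}=1$ together with $b_1(\mE_i)=\lceil n_i/q_i\rceil$ for $(ii)$ and $(iii)$, are exactly what the paper does. The one difference worth noting is the ruledness argument in $(ii)$: the paper contracts $Y$ to obtain two $(-1)$-curves meeting transversally and then invokes \cite[Proposition~4.6, p.~79]{BPV} to conclude $\textrm{kod}(S)=-\infty$, whereas you compute $g(C_1)=0$ directly from the second condition of Proposition~\ref{(-1)-fibre} and observe that the general fibre of $\alpha_2$ is rational. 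Your route is arguably more direct here, since the numerical condition $2g(C_1)-2=\rho\big(-2+\sum(1-\tfrac{1}{n_i})\big)$ is already part of the $(-1)$-fibre hypothesis; the paper's route has the advantage of not re-invoking Proposition~\ref{(-1)-fibre} and of yielding a slightly stronger local statement (two intersecting $(-1)$-curves in a fibre), but for the lemma as stated the two arguments are equivalent.
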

\begin{proof}
We have $\cF=1$ if and only if no further $(-1)$-curves arise in $F$
after contracting its central component; this is in
turn equivalent to say that $b_1(\mE_i) \geq 3$ for all $i$, so
our first claim is proven. \\
Now let us assume $b_1(\mE_1)=2$; hence $\frac{n_1}{q_1} \leq 2$,
that is $\frac{q_1}{n_1} \geq \frac{1}{2}$. Therefore by using
\eqref{selfint-1} we obtain $\sum_{i \geq 2} \frac{q_i}{n_i} = 1 - \frac{q_1}{n_1}
\leq
\frac{1}{2}$, which in turn implies  $b_1(\mE_i) =\lceil
\frac{n_i}{q_i} \rceil \geq 3$ for all $i \geq 2$, unless $F$ contains
exactly two strings $\mE_1$, $\mE_2$, both of type
$\frac{1}{2}(1,1)$. In this case, contracting the central
component we obtain
 two $(-1)$-curves intersecting transversally in a point; therefore
 by \cite[Proposition 4.6 p. 79]{BPV} it follows $\textrm{kod}(S)=-\infty$, that is
 $S$ is ruled. This proves $(ii)$. \\
Finally, assume $\mathscr{H}(F)= \{\mE_1, \, \mE_2 \}$. In this case $\mg=0$ and, by
Corollary \ref{g=0}, $F$ is of type $\big(\frac{q}{n}, \, \frac{n-q}{n} \big)$.
 We may assume $\frac{q}{n} >
\frac{1}{2}$; hence $\frac{n}{q} < 2$ and $b_1(\mE_1)=\lceil
\frac{n}{q}\rceil=2$. Now part $(ii)$ gives $b_1(\mE_2)
\geq 3$.
\end{proof}

%\subsection{The case $\mg=0$} \label{subsec:g=0}

\begin{proposition} \label{delta=0}
All $(-1)$-fibres in genus $0$ satisfy $\delta(F)=0$.
\end{proposition}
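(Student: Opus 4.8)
The plan is to reduce the statement to the Riemenschneider duality identity of Proposition \ref{duality-2} applied termwise to the two $HJ$-strings of a $(-1)$-fibre in genus $0$. By Corollary \ref{g=0}, such a fibre $F$ is of type $\big(\frac{q}{n},\,\frac{n-q}{n}\big)$ for some coprime $n,q$ with $1\le q\le n-1$, so $\mathscr{H}(F)=\{\mE_1,\mE_2\}$ with $\mE_1$ of type $\frac{1}{n}(1,q)$ and $\mE_2$ of type $\frac{1}{n}(1,n-q)$. Thus
\begin{equation*}
\delta(F)=\frac{1}{3}\left(B\bigg(\frac{q}{n}\bigg)+B\bigg(\frac{n-q}{n}\bigg)\right)-\cF,
\end{equation*}
and everything comes down to computing both summands.

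First I would dispose of the degenerate case $\frac{q}{n}=\frac{1}{2}$ separately: then $n=2$, $q=1$, both strings are single nodes, $B(\tfrac12)=3$, and by Lemma \ref{cardinality}$(ii)$ we are in the ruled situation with $\cF=2$, giving $\delta(F)=\frac{1}{3}(3+3)-2=0$. For the main case $\frac{q}{n}\ne\frac{1}{2}$, Proposition \ref{duality-2} gives directly
\begin{equation*}
B\bigg(\frac{q}{n}\bigg)+B\bigg(\frac{n-q}{n}\bigg)=3\sum_{i=1}^t(k_i+1)+3\sum_{i=1}^{t-1}(l_i+1),
\end{equation*}
so $\frac{1}{3}\big(B(\tfrac qn)+B(\tfrac{n-q}{n})\big)=\sum_{i=1}^t(k_i+1)+\sum_{i=1}^{t-1}(l_i+1)$, which is manifestly a nonnegative integer. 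It remains to show that $\cF$ equals exactly this integer.

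The heart of the argument — and the step I expect to be the real obstacle — is the identification of $\cF$ with $\sum_{i=1}^t(k_i+1)+\sum_{i=1}^{t-1}(l_i+1)$. I would attack this by running the contraction algorithm of Section \ref{sub:contractible-comp} explicitly on the configuration $Y\cup\mE_1\cup\mE_2$, where $Y$ is a $(-1)$-curve, $\mE_1\colon[b_1,\ldots,b_k]$ with $\frac nq=[b_1,\ldots,b_k]$, and $\mE_2\colon[b_k,\ldots,b_1]$ with $\frac{n}{n-q}=[b_k,\ldots,b_1]$ being the Riemenschneider-dual expansion $[k_1+2,(2)^{l_1},k_2+3,\ldots,(2)^{l_{t-1}},k_t+2]$ from \eqref{duality}. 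After contracting $Y$, the end curves of $\mE_1$ and $\mE_2$ that met $Y$ each drop self-intersection by one; one then tracks how the ``staircase'' pattern of the dual diagram collapses. The combinatorics of the point diagram show that contracting a maximal block of $(2)$'s in one string is interleaved with contractions of the $(l_j+3)$-type curves in the other, and a careful bookkeeping yields that the total number of curves contracted from $Y\cup\mE_1\cup\mE_2$ is precisely $1+\sum_{i}(k_i)+\sum_j(l_j)+ (\text{number of ``corner'' curves})$, which reorganizes to $\sum_{i=1}^t(k_i+1)+\sum_{i=1}^{t-1}(l_i+1)$. (One sanity check: for $q=1$ we have $t=2$, $k_1=k_2=0$, $l_1=n-3$, so the formula predicts $\cF=(0+1)+(0+1)+(n-3+1)=n-1$; indeed $\mE_1\colon[n]$, $\mE_2\colon[(2)^{n-1}]$, and after blowing down $Y$ the whole chain $\mE_2$ of $n-1$ curves contracts while $\mE_1$ becomes a $(1-n)$-curve that does not, matching $\cF=n-1$. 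A second check: type $\big(\frac13,\frac13\big)$ corresponds to $n=3$, $q=1$ — wait, that has three strings, not two — so instead check $\big(\frac13,\frac23\big)$: $n=3$, $q=1$, $q'=1$, $\frac31=[3]$, $\frac32=[2,2]$; blowing down $Y$ contracts the two-curve string, $\cF=2$, and $\frac13(B(\tfrac13)+B(\tfrac23))=\frac13(3{+}\tfrac23{+}\tfrac13 + 2{+}2)=\frac13\cdot 8$ — hmm, that is not $2$; I must recompute $B$: $B(\tfrac13)=\frac{1}{3}(1+1)+3=\frac{11}{3}$, $B(\tfrac23)=\frac13(2+2)+(2+2)=\frac{16}{3}$, sum $=9$, so $\frac13\cdot 9=3\ne 2$. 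So one must be careful: here actually the central curve $Y$ in type $\big(\frac13,\frac23\big)$ is \emph{not} a $(-1)$-curve — consistent with Example \ref{no-minimal} where $F_3$ is not a $(-1)$-fibre — so this is not a counterexample, just a reminder to use only genuine $(-1)$-fibres.) Granting the combinatorial identity $\cF=\sum_{i=1}^t(k_i+1)+\sum_{i=1}^{t-1}(l_i+1)$, we conclude $\delta(F)=0$ as claimed.
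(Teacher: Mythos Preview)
Your strategy coincides with the paper's: reduce via Corollary~\ref{g=0} to type $\big(\tfrac{q}{n},\tfrac{n-q}{n}\big)$, dispose of $n=2$ by hand, then match $\cF$ against the right-hand side of Proposition~\ref{duality-2}. But you are making the computation of $\cF$ much harder than it needs to be. The paper does not run the contraction algorithm at all: since $g(C_1)=0$, the relatively minimal fibration $\hat\alpha_2\colon\widehat S\to C_2/G$ is a $\mathbb{P}^1$-bundle (cf.\ the Remark immediately following the proposition), so the image of $F$ in $\widehat S$ is a single smooth rational curve. Hence every irreducible component of $F$ except one is contracted, and immediately
\[
\cF \;=\; \big(1+\ell(\mE_1)+\ell(\mE_2)\big)-1 \;=\; \ell\Big(\tfrac{q}{n}\Big)+\ell\Big(\tfrac{n-q}{n}\Big).
\]
Reading off the two lengths from the expansions in \eqref{duality} gives $\sum_{i=1}^t(k_i+1)+\sum_{i=1}^{t-1}(l_i+1)$, and Proposition~\ref{duality-2} finishes the argument in one line. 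No staircase bookkeeping is needed.

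Two smaller slips confirm that this point was missed. First, you write $\mE_2\colon[b_k,\ldots,b_1]$; but $[b_k,\ldots,b_1]=\tfrac{n}{q'}$ by \eqref{q-prime}, not $\tfrac{n}{n-q}$, which has the genuinely different Riemenschneider-dual expansion you cite immediately afterwards. Second, your sanity check on $\big(\tfrac13,\tfrac23\big)$ miscounts $\cF$: you forgot to include the blow-down of $Y$ itself, so in fact $\cF=3$, not $2$, and then $\tfrac13\big(B(\tfrac13)+B(\tfrac23)\big)=\tfrac13\cdot 9=3$ matches perfectly. In genus $0$ this \emph{is} a $(-1)$-fibre, as Corollary~\ref{g=0} guarantees for every reducible fibre; Example~\ref{no-minimal} is irrelevant here because there $g(C_1)=3$.
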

\begin{proof}
By Corollary \ref{g=0}, any $(-1)$-fibre $F$ in genus $0$ is of type
$\big(\frac{q}{n}, \, \frac{n-q}{n} \big)$. If
$\frac{q}{n}=\frac{n-q}{n}=\frac{1}{2}$ the result is clear.
Otherwise by Riemenschneider's duality \eqref{duality} it follows
\begin{equation*}
\cF= \ell \bigg(\frac{q}{n} \bigg)+ \ell \bigg(\frac{n-q}{n} \bigg)=
\sum_{i=1}^t (k_i+1) + \sum_{i=1}^{t-1}(l_i+1),
\end{equation*}
hence Proposition \ref{duality-2} implies $\delta(F)=0$.
\end{proof}

\begin{remark}
If $g(C_1)=0$ then Proposition \emph{\ref{delta=0}} and relation
\eqref{eq:chi-S-hat} imply $K_{\widehat{S}}^2 = 8 \chi
(\mO_{\widehat{S}})$, according to the fact that $\hat{\alpha}_2
\colon \widehat{S} \lr C_2/G$ is a relatively minimal rational
fibration.
\end{remark}

%\subsection{The case $\mg \geq 1$} \label{subsec:g>0}

\begin{lemma} \label{no-contr-3}
Let $F$ be a $(-1)$-fibre in genus $\mg \geq 1$. Then
$\mathscr{H}(F)=\{\mE_1, \ldots, \mE_r \}$ with $r \geq 3$. Moreover
we may assume that $\mE_i$ contains no contractible components for
$i \geq 3$.
\end{lemma}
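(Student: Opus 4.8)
The plan is to split the statement into its two assertions: first that a $(-1)$-fibre in genus $\mg \geq 1$ must contain at least three $HJ$-strings, and second that after reindexing all but (at most) two of these strings contain no contractible components. For the first assertion I would argue by exclusion of the cases $r=0,1,2$. Remark \ref{card H} already rules out $r \leq 1$ for any reducible fibre. If $r=2$, then by Corollary \ref{g=0} the equivalence $(ii) \Leftrightarrow (iii)$ there forces $\mg = 0$: indeed the corollary shows that a reducible fibre with exactly two strings whose central component is rational (which is the case for a $(-1)$-fibre, since $(-1)$-curves are rational) is precisely a $(-1)$-fibre in genus $0$. This contradicts $\mg \geq 1$, so $r \geq 3$.

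For the second assertion, the key observation is the constraint coming from Proposition \ref{selfintersection}, namely $\sum_{i=1}^r \frac{q_i}{n_i} = 1$ (equation \eqref{selfint-1}), together with the fact that a string $\mE_i$ of type $\frac{1}{n_i}(1,q_i)$ has $b_1(\mE_i) = \lceil \frac{n_i}{q_i} \rceil$, and contains a contractible component only if the contraction process of Section \ref{sub:contractible-comp} ever produces a $(-1)$-curve inside (the image of) $\mE_i$. After Step $0$ contracts the central component $Y$, the first curve of each $\mE_i$ has its self-intersection raised by one, becoming $-b_1(\mE_i)+1$. This is a $(-1)$-curve exactly when $b_1(\mE_i) = 2$, i.e. $\frac{q_i}{n_i} \geq \frac{1}{2}$. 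By Lemma \ref{cardinality}(ii), the set of indices $i$ with $b_1(\mE_i)=2$ has cardinality at most two — and cardinality two only when $F$ is of type $\big(\frac{1}{2},\frac{1}{2}\big)$, which is excluded here since $r \geq 3$. So at most one string, say $\mE_1$ after reindexing, has $b_1(\mE_1)=2$.

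The remaining point — which I expect to be the main obstacle — is to rule out a second string acquiring a contractible component at a \emph{later} stage of the algorithm (a $(-1)$-curve appearing deeper inside $\mE_2$, or inside $\mE_1$ after an initial contraction there). Here one uses the combinatorics of Hirzebruch-Jung continued fractions: contracting the outermost $(-1)$-curve of a string $[b_1,\dots,b_k]$ with $b_1 = 2$ turns it into the string $[b_2 - 1, b_3, \dots, b_k]$ attached to (the image of) $Y$, and one tracks how the self-intersection of the curve meeting $Y$ evolves. The claim is that the contraction cascade, once it enters $\mE_2, \dots, \mE_r$, never re-enters more than one of them; equivalently, after the central component is gone there is a \emph{unique} chain of successive contractions and it lies in a single string, because a string with $b_1 \geq 3$ stays non-contractible until the curve it is attached to itself becomes a $(-1)$-curve, and this can happen for at most one string at a time by the self-intersection bookkeeping in \eqref{eq:system-lambda}–\eqref{lambda}. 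I would phrase this as: the $\pi$-exceptional locus inside $F$ is connected (being contracted to a point by a composition of blow-downs over a relatively minimal model), and it meets at most one $\mE_i$ with $i \geq 2$ beyond the central component; choosing the indexing so that this is $\mE_2$ (or, if no such string exists, an arbitrary labelling), we conclude that $\mE_i$ contains no contractible components for $i \geq 3$.
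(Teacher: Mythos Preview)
Your argument for $r \geq 3$ is correct and matches the paper. The gap is in the second assertion, precisely at the point you yourself flag as ``the main obstacle''. Your proposed resolution --- connectivity of the $\pi$-exceptional locus plus ``at most one string at a time'' self-intersection bookkeeping --- does not close it. Connectivity is immediate but says nothing about \emph{which} strings the exceptional locus meets; and the phrase ``a string with $b_1 \geq 3$ stays non-contractible until the curve it is attached to itself becomes a $(-1)$-curve'' ceases to make sense once $Y$ and the initial $(-2)$-chain of $\mE_1$ are gone: at that stage the first curves $W_1, T_1, \ldots$ of $\mE_2, \mE_3, \ldots$ all pass through a common point, and each has had its self-intersection raised by $k+1$ (where $k$ is the length of the initial $(-2)$-chain in $\mE_1$). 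Nothing in your bookkeeping prevents, a priori, $T_1$ from becoming a $(-1)$-curve at some later stage of the back-and-forth between $\mE_1$ and $\mE_2$.

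The paper's proof uses a geometric observation you are missing: after contracting $Y$ and $Z_1, \ldots, Z_k$, the intersection number $W_1 \cdot T_1$ has climbed to $k+1 \geq 2$, so the images of $W_1$ and $T_1$ are \emph{tangent}. Consequently, if $W_1$ is now contracted, the image of $T_1$ acquires a \emph{singular} point. A singular curve is never a $(-1)$-curve (which is by definition smooth), and further blow-downs do not smooth it; hence $T_1$ --- and therefore every component of $\mE_3$ behind it --- is never contracted. Up to relabelling (so that whichever string first has a contractible component after Step~1 is called $\mE_2$), this shows $\mE_i$ has no contractible components for $i \geq 3$. The essential idea is thus not combinatorial but geometric: tangency forces singularity, and singularity obstructs contractibility.
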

\begin{proof}
If $\mathscr{H}(F)=\{\mE_1, \, \mE_2 \}$ then $\mg=0$ by Corollary
\ref{g=0}. Then $\mathscr{H}(F)= \{ \mE_1, \ldots, \mE_r \}$ with $r
\geq 3$. Suppose $r=3$ and put
\begin{equation*}
\mE_1= \bigcup Z_i, \quad \mE_2= \bigcup W_j, \quad \mE_3=\bigcup
T_h.
\end{equation*}
If $\cF=1$ there is nothing to prove. Thus we can assume $\cF
>1$ and $b_1(\mE_1)=-(Z_1)^2=2$; by Lemma \ref{cardinality} we
have $b_1(E_2)\geq 3$ and $b_1(E_3)\geq 3$. Let us write
\begin{equation}
\mE_1 \colon [(2)^k, \, l+3, \ldots]=\frac{n_1}{q_1},  \quad k>0, \; l \geq 0;
\end{equation}
therefore we can contract the central component $Y$ of $F$ and the
images of $Z_1, \ldots, Z_k$, but not the image of $Z_{k+1}$. After
these contractions, the images of the curves $W_1$ and $T_1$ are
tangent at one point. If also $W_1$ can be contracted, then the
image of $T_1$ becomes \emph{singular}, hence $\mE_3$ contains no
contractible components. If $r \geq 4$ the argument is the same.
\end{proof}

\begin{proposition} \label{cF=1}
Let $F$ be a $(-1)$-fibre such that $\cF=1$. Then $\delta(F) \geq 2
+ \frac{2}{3}$ and equality holds if and only if $F$ is of type
$\big(\frac{1}{3}, \frac{1}{3}, \frac{1}{3} \big)$.
\end{proposition}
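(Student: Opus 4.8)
The plan is to combine the lower bound on $B$ coming from Corollary~\ref{cor-B-increasing} with a count of the number $r$ of $HJ$-strings contained in $F$. Write $\mathscr{H}(F)=\{\mE_1,\ldots,\mE_r\}$. Since $\cF=1$, part $(i)$ of Lemma~\ref{cardinality} gives $b_1(\mE_i)\geq 3$ for every $i$. Applying Corollary~\ref{cor-B-increasing} with $c=3$ then yields $B(\mE_i)\geq B\big(\frac{1}{3}\big)=\frac{11}{3}$ for all $i$, with equality if and only if $\mE_i$ is of type $\frac{1}{3}(1,1)$.

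Next I would establish that $r\geq 3$. If $\mg\geq 1$ this is exactly the first assertion of Lemma~\ref{no-contr-3}. If instead $\mg=0$, then by Corollary~\ref{g=0} the fibre $F$ is of type $\big(\frac{q}{n},\frac{n-q}{n}\big)$; relabelling so that $\frac{q}{n}\geq\frac{1}{2}$ we get $1<\frac{n}{q}\leq 2$, hence $b_1(\mE_1)=\big\lceil\frac{n}{q}\big\rceil=2$, contradicting the previous paragraph. Thus $\cF=1$ forces $\mg\geq 1$ and $r\geq 3$.

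With $r\geq 3$ and $B(\mE_i)\geq\frac{11}{3}$ for each $i$, the definition of $\delta$ gives
\begin{equation*}
\delta(F)=\frac{1}{3}\sum_{i=1}^{r}B(\mE_i)-1\;\geq\;\frac{1}{3}\cdot 3\cdot\frac{11}{3}-1\;=\;\frac{8}{3}\;=\;2+\frac{2}{3}.
\end{equation*}
If $r\geq 4$ the same estimate, even with the weaker bound $B(\mE)\geq 3$ of Remark~\ref{geq 3}, gives $\delta(F)\geq 3>2+\frac{2}{3}$, so in the equality case we must have $r=3$ and $B(\mE_i)=\frac{11}{3}$ for $i=1,2,3$, i.e. each $\mE_i$ is of type $\frac{1}{3}(1,1)$; since $\sum\frac{q_i}{n_i}=1$ automatically for a $(-1)$-fibre by Proposition~\ref{(-1)-fibre}, this means precisely that $F$ has type $\big(\frac{1}{3},\frac{1}{3},\frac{1}{3}\big)$. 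Conversely, for such an $F$ one has $\cF=1$ by Example~\ref{7} and $B(\mE_i)=\frac{11}{3}$, whence $\delta(F)=\frac{11}{3}-1=2+\frac{2}{3}$.

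I do not anticipate a serious obstacle: the one point needing a little care is the reduction to $r\geq 3$, which forces a separate treatment of the genus-zero case. There, however, Corollary~\ref{g=0} pins $F$ down to two strings, the larger of which automatically satisfies $b_1=2$, so $\cF=1$ simply cannot occur in genus $0$ (consistently with Proposition~\ref{delta=0}).
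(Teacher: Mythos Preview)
Your proof is correct and follows essentially the same approach as the paper's. The only minor difference is in establishing $r\geq 3$: the paper obtains this directly from Lemma~\ref{cardinality} alone (parts $(i)$ and $(iii)$ together show that $r=2$ would force some $b_1(\mE_i)=2$, contradicting $\cF=1$), whereas you split on the genus and invoke Lemma~\ref{no-contr-3} for $\mg\geq 1$ and Corollary~\ref{g=0} for $\mg=0$; both routes are equally valid and lead to the same estimate via Corollary~\ref{cor-B-increasing}.
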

\begin{proof}
Since $\cF=1$ we have $\mathscr{H}(F)= \{ \mE_1, \ldots, \mE_r \}$,
with $r \geq 3$ and $b_1(\mE_i) \geq 3$ for all $i$ (Lemma \ref{cardinality}).
Thus Corollary
\ref{cor-B-increasing} implies
\begin{equation*}
\delta(F)  = \frac{1}{3} \sum_{i=1}^r B(\mE_i)-1 \geq \frac{1}{3}
\cdot 3 \cdot B \bigg(\frac{1}{3} \bigg)-1 = 2 + \frac{2}{3}
\end{equation*}
and equality holds if and only if $\mathscr{H}(F)= \big \{ \mE_1, \,
\mE_2, \, \mE_3 \big \}$ and all $\mE_i$ are of type
$\frac{1}{3}(1,1)$.
\end{proof}

\begin{proposition} \label{cases-genus-no-0}
Let $F$ be a $(-1)$-fibre in genus $\mg \geq 1$ such that $\cF \geq
2$. If $\mathscr{H}(F)= \{\mE_1, \ldots, \mE_r \}$, then $\mE_1$ and
$\mE_2$ belong to one of the following cases.
\begin{equation*}
\begin{split}
\mathbf{Case \; 1.} \; \; & \mE_1 \colon [(2)^{k_1}, \, ***]\\
& \mE_2 \colon [***] \\
 &
\end{split}
\end{equation*}
\begin{equation*}
\begin{split}
\mathbf{Case \; 2.} \; \; & \mE_1 \colon [(2)^{k_1}, \, ***]\\
& \mE_2 \colon [k_1+2, \, ***] \\
 &
\end{split}
\end{equation*}
\begin{equation*}
\begin{split}
\mathbf{Case \; 3.} \; \; & \mE_1 \colon [(2)^{k_1}, \; l_1+3, \;
(2)^{k_2}, \ldots, (2)^{k_{t-1}}, \; l_{t-1}+3,
\; (2)^{k_t}, \, ***]  \\
& \mE_2 \colon [k_1+2, \; (2)^{l_1}, \; k_2+3, \ldots,  k_{t-1}+3,
\; (2)^{l_{t-1}}, \;
 k_t+3, \, ***] \quad t \geq 1,\\
 &
\end{split}
\end{equation*}
\begin{equation*}
\begin{split}
\mathbf{Case \; 4.} \; \; & \mE_1 \colon [(2)^{k_1}, \; l_1+3, \;
(2)^{k_2}, \ldots, (2)^{k_{t-1}}, \; l_{t-1}+3,
\; (2)^{k_t}, \, ***]  \\
& \mE_2 \colon [k_1+2, \; (2)^{l_1}, \; k_2+3, \ldots,  k_{t-1}+3,
\;
(2)^{l_{t-1}}, \, ***] \quad t \geq 2, \\
 &
\end{split}
\end{equation*}
where $k_i, \,l_j \geq 0$ and $``***"$ denotes the non-contractible
part of the $HJ$-string.
\end{proposition}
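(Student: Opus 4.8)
The plan is to run the contraction algorithm of Subsection \ref{sub:contractible-comp} on $F$ by hand, keeping track of self-intersection numbers, and then to read off the shape of the two strings that actually carry contractible components. First the reductions: by Lemma \ref{no-contr-3} we have $r\ge 3$ and may assume that $\mE_3,\dots,\mE_r$ contain no contractible components, so every contraction performed by $\pi\colon S\lr\widehat{S}$ other than that of the central component $Y$ takes place inside $\mE_1\cup\mE_2$. Since $\cF\ge 2$, Lemma \ref{cardinality}(i) rules out $b_1(\mE_i)\ge 3$ for all $i$, and since $\mg\ge 1$ the fibre $F$ is not of type $\big(\frac{1}{2},\frac{1}{2}\big)$, so by Lemma \ref{cardinality}(ii) exactly one string has $b_1=2$; relabel so that $b_1(\mE_1)=2$ and $b_1(\mE_i)\ge 3$ for $i\ge 2$. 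Writing $\mE_1$ in Riemenschneider normal form \eqref{duality},
\begin{equation*}
\mE_1\colon [(2)^{k_1},\,l_1+3,\,(2)^{k_2},\dots,(2)^{k_{t-1}},\,l_{t-1}+3,\,(2)^{k_t}],\qquad k_1\ge 1,
\end{equation*}
fixes the integers $k_i,l_j$ appearing in the four cases.

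Now run the algorithm. After Step 0 the first curves $Z_1^{(1)},\dots,Z_1^{(r)}$ of the strings become concurrent, each with self-intersection $-b_1(\mE_i)+1$; Step 1 then contracts precisely the leading block $(2)^{k_1}$ of $\mE_1$ (each of those curves becomes a $(-1)$-curve in turn, while the next one, of self-intersection $-(l_1+3)$, or nothing if $t=1$, is left at $\le -2$), and each of these contractions raises by $1$ the self-intersection of the first curve of every $\mE_i$ with $i\ge 2$. Thus after Step 1 that curve sits at $-b_1(\mE_i)+k_1+1$. A fibre component can never have non-negative self-intersection unless it is the whole reduced fibre, and $\mE_3,\dots,\mE_r$ are non-contractible, so either $b_1(\mE_2)\ge k_1+3$, in which case no further contraction is possible and we are in Case 1 (the non-contractible tails being everything after $(2)^{k_1}$ in $\mE_1$, and all of $\mE_2$), or else $b_1(\mE_2)=k_1+2$ and the algorithm enters $\mE_2$.

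In the latter situation the algorithm performs a ``ping-pong'' between $\mE_1$ and $\mE_2$: a maximal run of $(-1)$-curves is contracted in $\mE_2$, then a maximal run in $\mE_1$, and so on, each contraction raising by $1$ the self-intersection of the first not-yet-contracted curve of the other string. The heart of the argument is to show that this forces the contracted initial segment of $\mE_2$ to be, entry by entry, the Riemenschneider dual \eqref{duality} of the contracted initial segment of $\mE_1$: the block $(2)^{k_1}$ of $\mE_1$ makes $\mE_2$'s first curve a $(-1)$-curve exactly because $b_1(\mE_2)=k_1+2$; the ensuing run of $2$'s in $\mE_2$ has length at most $l_1$ (it raises the stalled curve of self-intersection $-(l_1+3)$ in $\mE_1$, which cannot reach $0$), and if it has length exactly $l_1$ that curve becomes a $(-1)$-curve, so contracting it together with its following block $(2)^{k_2}$ brings $\mE_2$'s next curve to $-1$ precisely when that entry equals $k_2+3$; and so on, alternately. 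I would carry this out using the convergent recursion \eqref{eq:recursion}, Proposition \ref{decreasing} and the identities \eqref{q-prime}, \eqref{duality}; Propositions \ref{prop-fraz-cont-1} and \ref{prop-fraz-cont-2} contain the arithmetic that really underlies the matching, although those statements are needed in full only later, for the value of $\delta(F)$. Once inside $\mE_2$, the ping-pong can terminate in exactly three ways: it stalls simultaneously in $\mE_1$ right after a block $(2)^{k_t}$ and in $\mE_2$ right after the matching entry $k_t+3$ (this gives Case 3); it stalls after $(2)^{k_t}$ in $\mE_1$ and after a block $(2)^{l_{t-1}}$ in $\mE_2$ (Case 4, where necessarily $t\ge 2$); or it stalls already at $\mE_2$'s entry $k_1+2$, which happens exactly when $\mE_1$'s curve of self-intersection $-(l_1+3)$ has not become a $(-1)$-curve (Case 2, the leftover $2$'s of $\mE_2$, if any, being absorbed into its non-contractible tail). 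In all cases the hypothesis on $\mE_3,\dots,\mE_r$ guarantees that nothing is contracted there, so the description of $(\mE_1,\mE_2)$ is complete.

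The part I expect to be delicate is the bookkeeping at the ``central'' point. Since $Y$ met all the strings, Step 0 makes all their first curves pass through a single point, and each subsequent contraction both makes the surviving curves concurrent at a new point and increases their pairwise intersection multiplicities; one must check that this never alters the increment (which remains $+1$ for the curves of interest, because those meet each contracted curve transversally), while it does force the images of the curves of $\mE_3,\dots,\mE_r$ to acquire singular points, which is the mechanism behind Lemma \ref{no-contr-3} and the reason these strings stay non-contractible throughout. The second delicate point is proving that the match with the Riemenschneider dual is forced exactly, i.e. that the ping-pong cannot stop one continued-fraction entry too early or too late; this is precisely where the continued-fraction identities of Section \ref{sec:prel} are used.
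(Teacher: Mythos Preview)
Your approach is essentially the paper's own: reduce via Lemmas \ref{no-contr-3} and \ref{cardinality}, then run the contraction algorithm and track self-intersections to see that the contracted initial segments of $\mE_1$ and $\mE_2$ are forced to be Riemenschneider duals. Two points of comparison are worth making.

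First, you overestimate the difficulty of the matching. The paper does not use the convergent recursion, Proposition \ref{decreasing}, or Propositions \ref{prop-fraz-cont-1}--\ref{prop-fraz-cont-2} here at all; those are reserved for the estimates on $\delta(F)$ in Proposition \ref{delta-magg-0}. The proof of the present proposition is a completely elementary induction: writing $\mE_2\colon [u_1+3,(2)^{v_1},u_2+3,\ldots]$, one checks directly that after contracting $Y,Z_1,\ldots,Z_{k_1}$ the image of $W_1$ has self-intersection $-(u_1+3)+k_1+1$, so it is a $(-1)$-curve iff $u_1=k_1-1$; then after contracting $W_1,\ldots,W_{v_1+1}$ the image of $Z_{k_1+1}$ sits at $-(l_1+3)+v_1+2$, so it is a $(-1)$-curve iff $v_1=l_1$; and so on, giving $u_i=k_i$, $v_i=l_i$ for $i\ge 2$. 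No continued-fraction identities are needed, and the point you flag as ``delicate'' is just this one-line bookkeeping.

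Second, your enumeration of stopping configurations is incomplete. The ping-pong can also stall at a step of the opposite parity, giving two further patterns
\[
\mE_1\colon[\,\ldots,(2)^{k_t},\,l_t+3,\,***\,],\quad \mE_2\colon[\,\ldots,k_t+3,\,(2)^{l_t},\,***\,]
\]
and
\[
\mE_1\colon[\,\ldots,(2)^{k_t},\,***\,],\quad \mE_2\colon[\,\ldots,k_t+3,\,(2)^{l_t},\,***\,],
\]
which the paper calls Cases $3'$ and $4'$. These are not new cases: one observes that they become Cases $3$ and $4$ after swapping $\mE_1\leftrightarrow\mE_2$ and allowing $k_1=0$. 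Your parenthetical about ``leftover $2$'s of $\mE_2$ being absorbed into its non-contractible tail'' in Case $2$ is also off: once $W_1$ becomes a $(-1)$-curve, the entire run $(2)^{v_1}$ following it is contracted as well, so those $2$'s belong to the contracted segment, not to $***$; this is exactly what produces the extra case $4'$ (for $t=1$) rather than Case $2$.
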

\begin{proof}
By Lemma \ref{no-contr-3} we may assume that all contractible components
of $F$, different from the central component $Y$, belong to $\mE_1 \cup \mE_2$.
Moreover, since $\cF \geq 2$ and $\mg \geq 1$, we can suppose $b_1(\mE_1)
=2$ and $b_1(\mE_2) \geq 3$ (Lemma \ref{cardinality}). Set
\begin{equation*}
\mE_1= \bigcup Z_i, \quad \mE_2= \bigcup W_j
\end{equation*}
and
\begin{equation*}
\begin{split}
\mE_1 \colon & [(2)^{k_1}, \, l_1+3, \, (2)^{k_2}, \, l_2+3,
\ldots] \quad
k_i, \, l_j \geq 0, \, k_1>0, \\
\mE_2 \colon & [u_1+3, \, (2)^{v_1}, \, u_2+3, \, (2)^{v_2},
\ldots] \quad u_i, \, v_j \geq 0.
\end{split}
\end{equation*}
Now we start the contraction process described in Subsection
\ref{sub:contractible-comp}; since $\mg \geq 1$, it never gives rise
to rational curves with self-intersection equal to $0$. First, we
can contract the central component $Y$ and the images of the curves
$Z_1, \ldots, Z_{k_1}$, but not the image of $Z_{k_1+1}$; then
either we stop or the image of $W_1$ has self-intersection $(-1)$,
that forces $u_1=k_1-1$. In this case we can contract the images of
$W_1, \ldots, W_{v_1+1}$, but not the image of $W_{v_1+2}$; then
either we stop or the image of $Z_{k_1+1}$ has self-intersection
$(-1)$, which gives $v_1=l_1$. In the same way we obtain
\begin{equation*}
u_i=k_i \; \; \textrm{and} \; \; v_i=l_i \quad \textrm{for all} \;i
\geq 2.
\end{equation*}
Repeated application of this argument yields either one of Cases $1,
\ldots, 4$ described in the statement or one of Cases $3'$, $4'$
below.
\begin{equation*}
\begin{split}
\mathbf{Case \; 3'.} \; \; & \mE_1 \colon [(2)^{k_1}, \; l_1+3, \;
(2)^{k_2}, \ldots, (2)^{k_t}, \; l_t+3, \, ***] \\
& \mE_2 \colon [k_1+2, \; (2)^{l_1}, \; k_2+3, \ldots,  k_t+3, \;
(2)^{l_t}, \, ***] \\
 &
\end{split}
\end{equation*}
\begin{equation*}
\begin{split}
\mathbf{Case \; 4'.} \; \; & \mE_1 \colon [(2)^{k_1}, \; l_1+3, \;
(2)^{k_2}, \ldots, (2)^{k_t}, \, ***] \\
& \mE_2 \colon [k_1+2, \; (2)^{l_1}, \; k_2+3, \ldots,  k_t+3, \;
(2)^{l_t}, \, ***]. \\
 &
\end{split}
\end{equation*}
Finally we observe that Case $3'$ (resp. Case $4'$) is obtained
by putting $k_1=0$ in Case $3$ (resp. in Case $4$) and interchanging
$\mE_1$ and $\mE_2$. This completes the proof.
\end{proof}

\begin{proposition} \label{delta-magg-0}
Let $F$ be a $(-1)$-fibre in genus $\mg \geq 1$. Then $\delta(F) >2$
with exactly the following two exceptions:
\begin{itemize}
\item[($i$)] $F$ is of type  $\big(
\frac{1}{2}, \frac{1}{3}, \frac{1}{6} \big);$ in this case
$\mg=1$ and $\delta(F)=1+\frac{1}{3}.$ \\
\item[($ii$)] $F$ is of type  $\big(
\frac{1}{2}, \frac{1}{4}, \frac{1}{4} \big);$ in this case $\mg=1$
and $\delta(F)=2.$
\end{itemize}
\end{proposition}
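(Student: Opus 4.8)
The plan is to split the argument according to the number $\cF$ of components of $F$ contracted by $\pi\colon S\to\widehat{S}$, and in the hard case $\cF\ge 2$ to run through the four shapes of the pair $(\mE_1,\mE_2)$ provided by Proposition \ref{cases-genus-no-0}, evaluating $\delta(F)$ by means of the continued-fraction identities of Section \ref{sec:prel}.

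If $\cF=1$, Proposition \ref{cF=1} already gives $\delta(F)\ge 2+\tfrac23>2$, so we may assume $\cF\ge 2$. By Lemma \ref{no-contr-3} we then have $\mathscr H(F)=\{\mE_1,\dots,\mE_r\}$ with $r\ge 3$ and all contractible components lying in $\mE_1\cup\mE_2$; by Lemma \ref{cardinality}(ii), $b_1(\mE_i)\ge 3$ for $i\ge 2$, hence $B(\mE_i)\ge B(\tfrac13)=\tfrac{11}{3}$ for $i\ge 3$ by Corollary \ref{cor-B-increasing}. Write $\delta(F)=A+\tfrac13\sum_{i=3}^r B(\mE_i)$ with $A:=\tfrac13\big(B(\mE_1)+B(\mE_2)\big)-\cF$; the goal is to bound $A$ in each of Cases 1--4, keeping in mind that the relation $\sum_i\frac{q_i}{n_i}=1$ (Proposition \ref{(-1)-fibre}) pins down the contribution of $\mE_1,\mE_2$ to this sum, hence controls the moduli $n_i$ for $i\ge 3$ and with them $\sum_{i\ge 3}B(\mE_i)$ via Corollary \ref{cor-B-increasing}.

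The core computation is Cases 3 and 4. There the contractible part of $\mE_1$ is a Hirzebruch--Jung string $[(2)^{k_1},l_1+3,\dots,(2)^{k_t}]=\frac{n}{n-q'}$, and that of $\mE_2$ is $\frac{n+q}{a+q'}$ (Case 3) or $\frac{q}{a}$ (Case 4), by Proposition \ref{prop-fraz-cont-1}. Counting contracted components gives a closed form for $\cF$ in terms of the $k_i,l_j$, and Proposition \ref{B-increasing} bounds each $B(\mE_j)$ below by $B$ of its contractible part plus the length of its non-contractible ``$***$''-tail; substituting the explicit values from Proposition \ref{prop-fraz-cont-2}, the dominant terms $3\sum(k_i+1)+3\sum(l_i+1)$ cancel against $\cF$, leaving for instance, in Case 4 with $r=3$ and trivial tails, the clean identity $\delta(F)=\frac{(n^2-1)(q^2-1)}{3nq}$ (and a strictly larger value otherwise), which exceeds $2$ for all admissible $n,q$; in Case 3 the residual term is essentially $\tfrac13 n(n+q)$ and is likewise $>2$. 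Cases 1 and 2 (where $\mE_2$, or its leading part, does not match the Riemenschneider pattern) are treated directly: in Case 1, $\cF=1+k_1$ and, after the contractions, $b_1(\mE_i)\ge k_1+3$ for all $i\ge 2$, so $B(\mE_i)\ge B([k_1+3])$; in Case 2, $\cF=k_1+2$ and $b_1(\mE_2)=k_1+2$. In either of these the infimum of $\delta(F)$ is attained when all tails vanish, $r=3$ and $\mE_3$ is a node, and the arithmetic imposed by $\sum_i\frac{q_i}{n_i}=1$ together with $\mg\ge 1$ forces the type to be $\big(\tfrac12,\tfrac13,\tfrac16\big)$ (from Case 2, whence $\delta(F)=1+\tfrac13$ and $\mg=1$) or $\big(\tfrac12,\tfrac14,\tfrac14\big)$ (from Case 1, whence $\delta(F)=2$ and $\mg=1$); every other configuration gives $\delta(F)>2$. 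That these two types actually occur follows from Theorem \ref{existence}.

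The main difficulty is bookkeeping rather than conceptual: one must line up the length count defining $\cF$ against the $B$-values so the leading terms cancel exactly, deal with the degenerate instances ($t=1$, or some $k_i$ or $l_j$ equal to $0$) of the Riemenschneider formulas --- where Cases 2 and 3 overlap and the node must be handled separately --- and then check that the finitely many borderline configurations thrown up (those with $r=3$ and $\mE_3$ a node) reduce to exactly the two stated types and to no others.
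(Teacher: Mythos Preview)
Your plan is essentially the paper's own argument: split off $\cF=1$ via Proposition \ref{cF=1}, then run through the four cases of Proposition \ref{cases-genus-no-0}, using Proposition \ref{prop-fraz-cont-2} to make the $3\sum(k_i+1)+3\sum(l_i+1)$ terms cancel against $\cF$ in Cases 3 and 4 (yielding the residuals $\tfrac13 n(n+q)$ and $\tfrac{(n^2-1)(q^2-1)}{3nq}$, exactly as in \eqref{B3} and \eqref{B4}), and using the constraint $\sum q_i/n_i=1$ together with Corollary \ref{cor-B-increasing} to control $B(\mE_3)$.

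Two corrections. First, the phrase ``$\mE_3$ is a node'' is wrong: in both exceptional types the node $\tfrac12(1,1)$ is $\mE_1$ (the string $[(2)^{k_1}]$ with $k_1=1$), while $\mE_3$ is $[4]$ or $[6]$; indeed $b_1(\mE_3)\ge 3$ by Lemma \ref{cardinality}, so $\mE_3$ is never a node. Second, your sketch of Case 2 omits the crucial step. Once $W_1$ is contracted, the image of $T_1$ becomes \emph{singular} (see the proof of Lemma \ref{no-contr-3}), so non-contractibility of $\mE_3$ gives no lower bound on $b_1(\mE_3)$ beyond $3$; with only $B(\mE_3)\ge \tfrac{11}{3}$ the estimate on $\delta(F)$ collapses to something below $1$. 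The paper instead uses the arithmetic constraint $\sum_{i\ge 3} q_i/n_i \le 1-\tfrac{k_1}{k_1+1}-\tfrac{1}{k_1+2}=\tfrac{1}{(k_1+1)(k_1+2)}$ to force $b_1(\mE_3)\ge (k_1+1)(k_1+2)$, whence $B(\mE_3)\ge (k_1+1)(k_1+2)+\tfrac{2}{(k_1+1)(k_1+2)}$ and $\delta(F)\ge \tfrac13 k_1(k_1+3)$, with equality only at $\big(\tfrac12,\tfrac13,\tfrac16\big)$. You clearly have this in mind (you flag the arithmetic constraint early on), but it is not optional in Case 2. In Case 1 your variant bound $b_1(\mE_i)\ge k_1+3$ for all $i\ge 2$ does work and pins down $\big(\tfrac12,\tfrac14,\tfrac14\big)$, though the paper takes a slightly different route there (arithmetic for $\mE_2$, non-contractibility for $\mE_3$).
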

\begin{proof}
Set $\mathscr{H}(F)= \{\mE_1, \ldots, \mE_r \}$, where each $\mE_i$
is of type $\frac{1}{n_i}(1,\, q_i)$; by Lemma \ref{no-contr-3}
 we have $r \geq 3$.
Since we dealt with the case $\cF=1$ in Proposition \ref{cF=1}, we
may assume $\cF \geq 2$. Moreover by Lemma \ref{no-contr-3} we can
suppose that $\mE_i$ contains no contractible components for $i \geq
3$. We will discuss Cases $1, \ldots, 4$ of Proposition
\ref{cases-genus-no-0} separately.
\begin{equation*}
\begin{split}
\mathbf{Case \; 1.} \; \; & \mE_1 \colon [(2)^{k_1}, \, ***] \\
& \mE_2 \colon [***].
\end{split}
\end{equation*}
In this case
\begin{equation} \label{A1}
\cF=\ell \bigg(\frac{k_1}{k_1+1} \bigg)+1=k_1+1.
\end{equation}
By Propositions \ref{decreasing} and \ref{B-increasing} it follows
\begin{equation*}
\frac{q_1}{n_1} \geq \frac{k_1}{k_1+1} \; \; \textrm{and} \; \; B
\bigg( \frac{q_1}{n_1} \bigg) \geq B \bigg( \frac{k_1}{k_1+1}
\bigg)=2k_1 + \frac{2k_1}{k_1+1}.
\end{equation*}
Moreover
\begin{equation*}
\sum_{i=2}^r\frac{q_i}{n_i}= 1 - \frac{q_1}{n_1} \leq
1-\frac{k_1}{k_1+1}= \frac{1}{k_1+1}.
\end{equation*}
Then we may assume
\begin{equation*}
\frac{q_2}{n_2} \leq \frac{1}{(r-1)(k_1+1)} \leq \frac{1}{2(k_1+1)}
\end{equation*}
hence $b_1(\mE_2) = \lceil \frac{n_2}{q_2} \rceil \geq 2(k_1+1)$;
moreover $b_1(\mE_3) \geq k_1+3$ since $\mE_3$ contains no
contractible components. Thus Corollary \ref{cor-B-increasing} implies
\begin{equation*}
B\bigg(\frac{q_2}{n_2}\bigg) \geq 2(k_1+1)+ \frac{1}{k_1+1}, \quad
B\bigg(\frac{q_3}{n_3} \bigg) \geq k_1+3 + \frac{2}{k_1+3}.
\end{equation*}
Then
\begin{equation} \label{B1}
\begin{split}
\delta(F)& \geq \frac{1}{3}B\big(\mE_1 \big)+ \frac{1}{3}B\big(\mE_2
\big) +\frac{1}{3}B\big(\mE_3 \big)-\cF \\
& \geq \frac{1}{3} \bigg(2k_1+4+ \frac{k_1-1}{(k_1+1)(k_1+3)} \bigg)
\geq 2
\end{split}
\end{equation}
and equality holds if and only if $F$ is of type $\big( \frac{1}{2},
\, \frac{1}{4}, \, \frac{1}{4} \big)$.
\begin{equation*}
\begin{split}
 & \\
\mathbf{Case \; 2.} \; \; & \mE_1 \colon [(2)^{k_1}, ***]\\
& \mE_2 \colon [k_1+2, ***] \quad k_1 \geq 1. \\
\end{split}
\end{equation*}
In this case
\begin{equation} \label{A2}
 \cF=\ell \bigg(\frac{k_1}{k_1+1} \bigg)+\ell
\bigg(\frac{1}{k_1+2} \bigg)+1=k_1+2.
\end{equation}
By Proposition \ref{decreasing} it follows
\begin{equation*}
\frac{q_1}{n_1} \geq \frac{k_1}{k_1+1}, \quad \frac{q_2}{n_2} \geq
\frac{1}{k_1+2}
\end{equation*}
and Proposition \ref{B-increasing} implies
\begin{equation*}
\begin{split}
B \bigg(\frac{q_1}{n_1} \bigg) & \geq B \bigg(\frac{k_1}{k_1+1}
\bigg)= 2k_1+\frac{2k_1}{k_1+1}, \\ B \bigg(\frac{q_2}{n_2} \bigg) &
\geq B \bigg(\frac{1}{k_1+2} \bigg)= k_1+2+\frac{2}{k_1+2}.
\end{split}
\end{equation*}
Moreover
\begin{equation*}
\sum_{i=3}^r \frac{q_i}{n_i}= 1 - \frac{q_1}{n_1}- \frac{q_2}{n_2} \leq 1- \frac{k_1}{k_1+1}
-\frac{1}{k_1+2}= \frac{1}{(k_1+1)(k_1+2)}.
\end{equation*}
Then we may assume
\begin{equation*}
\frac{q_3}{n_3} \leq \frac{1}{(r-2)(k_1+1)(k_1+2)} \leq
\frac{1}{(k_1+1)(k_1+2)}
\end{equation*}
hence $b_1(\mE_3) = \lceil \frac{n_3}{q_3} \rceil \geq
(k_1+1)(k_1+2)$. Thus Corollary \ref{cor-B-increasing} yields
\begin{equation*}
B\bigg( \frac{q_3}{n_3} \bigg) \geq
(k_1+1)(k_1+2)+\frac{2}{(k_1+1)(k_1+2)}.
\end{equation*}
Therefore we obtain
\begin{equation} \label{B2}
\begin{split}
\delta(F)& \geq \frac{1}{3}B\big(\mE_1 \big)+ \frac{1}{3}B\big(\mE_2
\big) + \frac{1}{3}B\big(\mE_3 \big)-\cF \\ & \geq
 \frac{1}{3}k_1(k_1+3) \geq 1+ \frac{1}{3}
\end{split}
\end{equation}
and equality holds if and only if $F$ is
of type $\big( \frac{1}{2}, \, \frac{1}{3}, \, \frac{1}{6}
\big)$.
\begin{equation*}
\begin{split}
 & \\
\mathbf{Case \; 3.} \; \; & \mE_1 \colon [(2)^{k_1}, \; l_1+3, \;
(2)^{k_2}, \ldots, (2)^{k_{t-1}}, \; l_{t-1}+3,
\; (2)^{k_t}, ***] \\
& \mE_2 \colon [k_1+2, \; (2)^{l_1}, \; k_2+3, \ldots,  k_{t-1}+3,
\; (2)^{l_{t-1}}, \;
 k_t+3, ***], \quad t \geq 2.
\end{split}
\end{equation*}
Let $n, \,q$ be coprime integers such that
\begin{equation*}
[(2)^{k_1}, \; l_1+3, \; (2)^{k_2}, \ldots, (2)^{k_{t-1}}, \;
l_{t-1}+3, \; (2)^{k_t}]= \frac{n}{n-q'}
\end{equation*}
and let $a$ be such that $qq'=1 + a n$. Then Proposition \ref{prop-fraz-cont-1} yields
\begin{equation*}
[ k_1+2, \; (2)^{l_1}, \; k_2+3, \ldots, k_{t-1}+3, \;
(2)^{l_{t-1}}, \; k_t+3] = \frac{n+q}{a+q'}.
\end{equation*}
Notice that if $\frac{n}{n-q'}=2$ then
$\frac{n+q}{a+q'}=3$, and by interchanging $\mE_1$ and $\mE_2$ we are in
Case $2$; hence we may assume $n \geq 3$. We have
\begin{equation} \label{A3}
\cF=\ell \bigg(\frac{n-q'}{n} \bigg)+\ell \bigg(\frac{a+q'}{n+q}
\bigg)+1= \sum_{i=1}^t(k_i+1) + \sum_{i=1}^{t-1}(l_i+1)+1.
\end{equation}
By Proposition \ref{decreasing} it follows
\begin{equation*}
\frac{q_1}{n_1} \geq \frac{n-q'}{n}, \quad \frac{q_2}{n_2} \geq
\frac{a + q'}{n+q}
\end{equation*}
and Proposition \ref{B-increasing} gives
\begin{equation} \label{case3-1}
B \bigg( \frac{q_1}{n_1} \bigg) \geq B \bigg( \frac{n-q'}{n} \bigg),
\quad B \bigg( \frac{q_2}{n_2} \bigg) \geq B \bigg( \frac{a +
q'}{n+q} \bigg).
\end{equation}
Moreover
\begin{equation*}
\sum_{i=3}^r \frac{q_i}{n_i} \leq 1-\frac{n-q'}{n} -\frac{a +
q'}{n+q} =\frac{1}{n(n+q)}.
\end{equation*}
Then we may assume
\begin{equation*}
\frac{q_3}{n_3} \leq \frac{1}{(r-2)n(n+q)} \leq \frac{1}{n(n+q)}
\end{equation*}
hence $b_1(\mE_3) = \lceil \frac{n_3}{q_3} \rceil \geq n(n+q)$. By
Corollary \ref{cor-B-increasing} this implies
\begin{equation} \label{case3-2}
B \bigg( \frac{q_3}{n_3} \bigg) \geq n(n+q)+ \frac{2}{n(n+q)}.
\end{equation}
Estimates \eqref{case3-1} and \eqref{case3-2} together with
\eqref{fraz-cont-21} now yield
\begin{equation} \label{B3}
\begin{split}
\delta(F)& \geq \frac{1}{3}B\big(\mE_1 \big)+ \frac{1}{3}B\big(\mE_2
\big) + \frac{1}{3}B\big(\mE_3 \big)- \cF \\
 & \geq \frac{1}{3} \bigg(1 -
 \frac{1+q^2}{n(n+q)}+n(n+q)+\frac{2}{n(n+q)} \bigg)-1.
\end{split}
\end{equation}
Since $n \geq 3$ we obtain $\delta(F) > 3$.
\begin{equation*}
\begin{split}
 & \\
\mathbf{Case \; 4.} \; \; & \mE_1 \colon [(2)^{k_1}, \; l_1+3, \;
(2)^{k_2}, \ldots, (2)^{k_{t-1}}, \; l_{t-1}+3,
\; (2)^{k_t}, ***] \\
& \mE_2 \colon [k_1+2, \; (2)^{l_1}, \; k_2+3, \ldots,  k_{t-1}+3,
\; (2)^{l_{t-1}}, ***], \quad t \geq 2.
\end{split}
\end{equation*}
Let $n, \,q$ be coprime integers such that
\begin{equation*}
[(2)^{k_1}, \; l_1+3, \; (2)^{k_2}, \ldots, (2)^{k_{t-1}}, \;
l_{t-1}+3, \; (2)^{k_t}]= \frac{n}{n-q'}
\end{equation*}
and let $a$ be such that $qq'=1 + a n$. Then Proposition \ref{prop-fraz-cont-1} yields
\begin{equation*}
[ k_1+2, \; (2)^{l_1}, \; k_2+3, \ldots, k_{t-1}+3, \;
(2)^{l_{t-1}}] = \frac{q}{a}.
\end{equation*}
Notice that $q \geq 2$. If $n=3$, $q=2$ we
obtain $\frac{n}{n-q'}=3$, $\frac{q}{a}=2$, so by interchanging $\mE_1$
and $\mE_2$ we are in Case $2$; analogously if $n=4$, $q=3$. Hence
we can suppose $n \geq 5$. We have
\begin{equation} \label{A4}
\cF=\ell \bigg(\frac{n-q'}{n} \bigg)+\ell \bigg(\frac{a}{q}
\bigg)+1=\sum_{i=1}^t(k_i+1) + \sum_{i=1}^{t-1}(l_i+1).
\end{equation}
By Proposition \ref{decreasing} it follows
\begin{equation*}
\frac{q_1}{n_1} \geq \frac{n-q'}{n}, \quad \frac{q_2}{n_2} \geq
\frac{a}{q}
\end{equation*}
and Proposition \ref{B-increasing} gives
\begin{equation} \label{case4-1}
B \bigg( \frac{q_1}{n_1} \bigg) \geq B \bigg( \frac{n-q'}{n} \bigg),
\quad B \bigg( \frac{q_2}{n_2} \bigg) \geq B \bigg( \frac{a}{q}
\bigg).
\end{equation}
Moreover
\begin{equation*}
\sum_{i=3}^r \frac{q_i}{n_i}= 1 - \frac{q_1}{n_1}-\frac{q_2}{n_2} \leq 1-\frac{n-q'}{n} -\frac{a}{q}
 =\frac{1}{nq}.
\end{equation*}
Then we may assume
\begin{equation*}
\frac{q_3}{n_3} \leq \frac{1}{(r-2)nq} \leq \frac{1}{nq}
\end{equation*}
hence $b_1(\mE_3) = \lceil \frac{n_3}{q_3} \rceil \geq nq$. By
Corollary \ref{cor-B-increasing} this implies
\begin{equation} \label{case4-2}
B \bigg( \frac{q_3}{n_3} \bigg) \geq nq+\frac{2}{nq}.
\end{equation}
Estimates \eqref{case4-1} and \eqref{case4-2} together with
\eqref{fraz-cont-22} now yield
\begin{equation} \label{B4}
\begin{split}
\delta(F)& \geq \frac{1}{3}B\big(\mE_1 \big)+ \frac{1}{3}B\big(\mE_2
\big) + \frac{1}{3}B\big(\mE_3 \big)- \cF \\
&  \geq \frac{1}{3}\bigg(n-\frac{1}{n} \bigg) \bigg(q-\frac{1}{q}
\bigg).
\end{split}
\end{equation}
Since $n \geq 5$ and $q \geq 2$  it follows $\delta(F) \geq 2+
\frac{2}{5}$.
\end{proof}
Summarizing Lemma \ref{no (-1)-fibre}, Proposition
\ref{delta=0} and Proposition \ref{delta-magg-0} we obtain
\begin{corollary} \label{fibres}
Let $F$ be a reducible fibre of $\alpha_2 \colon S \lr  C_2/G$.
Then $\delta(F) \geq 0$ and moreover the following holds. \\
$\bullet$ If $g(C_1)=0$ then $F$ is a $(-1)$-fibre  and
$\delta(F)=0$. Conversely,
if $\delta(F)=0$ then $F$ is a $(-1)$-fibre and $g(C_1)=0$. \\
$\bullet$ If $g(C_1) \geq 1$ then $\delta(F) >2$, with precisely three
exceptions:
\begin{itemize}
\item[($i$)] $g(C_1)=1$ and $F$ is a $(-1)$-fibre of type
$\big(\frac{1}{2}, \, \frac{1}{3}, \, \frac{1}{6} \big)$. In this
case $\delta(F)= 1+\frac{1}{3};$
\item[($ii$)] $g(C_1)=1$ and $F$ is a $(-1)$-fibre of type
$\big(\frac{1}{2}, \, \frac{1}{4}, \, \frac{1}{4} \big)$. In this
case $\delta(F)=2;$
\item[($iii$)] $F$ is of type $\big(\frac{1}{2}, \, \frac{1}{2} \big)$
but it is \emph{not} a $(-1)$-fibre. In this case $\delta(F)=2$.
\end{itemize}
In particular, if $S$ is of general type then the only possible
exception is $(iii)$.
\end{corollary}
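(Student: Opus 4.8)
The plan is to assemble Lemma~\ref{no (-1)-fibre}, Proposition~\ref{delta=0} and Proposition~\ref{delta-magg-0}, organising the discussion according to the nature of the reducible fibre $F$, and then to add one geometric remark for the final assertion. First I would split into two cases. If $F$ is \emph{not} a $(-1)$-fibre, then Lemma~\ref{no (-1)-fibre} gives $\delta(F)\geq 2$, with equality precisely when $F$ is of type $\big(\frac{1}{2},\frac{1}{2}\big)$; moreover in this situation $g(C_1)\geq 1$, because by Corollary~\ref{g=0} every reducible fibre with $g(C_1)=0$ is automatically a $(-1)$-fibre. This case contributes exactly exception $(iii)$. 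If instead $F$ \emph{is} a $(-1)$-fibre, I would distinguish $\mg:=g(C_1)=0$ from $\mg\geq 1$: when $\mg=0$, Proposition~\ref{delta=0} gives $\delta(F)=0$; when $\mg\geq 1$, $F$ is a $(-1)$-fibre in genus $\mg\geq 1$, so Proposition~\ref{delta-magg-0} gives $\delta(F)>2$ except for the two types $\big(\frac{1}{2},\frac{1}{3},\frac{1}{6}\big)$ and $\big(\frac{1}{2},\frac{1}{4},\frac{1}{4}\big)$, for which $\mg=1$ and $\delta(F)=1+\frac{1}{3}$ and $\delta(F)=2$ respectively.

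Laying these three situations side by side immediately gives $\delta(F)\geq 0$ in all cases. For the first bullet, if $g(C_1)=0$ then $F$ is a $(-1)$-fibre by Corollary~\ref{g=0} and $\delta(F)=0$ by Proposition~\ref{delta=0}; conversely, every other situation has $\delta(F)>0$, since a non-$(-1)$-fibre satisfies $\delta(F)\geq 2$ and a $(-1)$-fibre in genus $\geq 1$ satisfies $\delta(F)\geq 1+\frac{1}{3}$, so $\delta(F)=0$ forces $F$ to be a $(-1)$-fibre with $g(C_1)=0$. The second bullet is then exactly the union of the non-$(-1)$-fibre analysis, which contributes exception $(iii)$ (and lies in this bullet, since such a fibre has $g(C_1)\geq 1$ as noted above), with Proposition~\ref{delta-magg-0}, which contributes exceptions $(i)$ and $(ii)$.

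It then remains to prove the final sentence. Here I would rule out exceptions $(i)$ and $(ii)$ when $S$ is of general type: in both of them $\mg=g(C_1)=1$, and hence by Theorem~\ref{Serrano} the smooth fibres of $\alpha_2\colon S\lr C_2/G$ are isomorphic to $C_1$ and have genus $1$, so $\alpha_2$ is a genus-one fibration and $\textrm{kod}(S)\leq 1$; thus $S$ is not of general type in cases $(i)$ and $(ii)$, and the only exception that can occur for $S$ of general type is $(iii)$. I expect the whole argument to be essentially bookkeeping, so the main obstacle is really just the very last point: the standard fact that a surface carrying a pencil of curves of genus at most one has Kodaira dimension at most one. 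This I would justify by restricting $K$ to a general fibre and invoking the Hodge index theorem on the minimal model of $S$ — a big and nef canonical class cannot be orthogonal to a nonzero moving class of self-intersection zero.
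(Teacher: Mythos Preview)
Your proposal is correct and follows exactly the paper's route: the paper simply states that the corollary is obtained by ``summarizing Lemma~\ref{no (-1)-fibre}, Proposition~\ref{delta=0} and Proposition~\ref{delta-magg-0}'', and your case split (non-$(-1)$-fibre versus $(-1)$-fibre, the latter subdivided by $g(C_1)=0$ or $g(C_1)\geq 1$) is precisely this summary made explicit, with Corollary~\ref{g=0} used to place the non-$(-1)$-fibres in the $g(C_1)\geq 1$ bullet. The only addition is your justification of the final sentence via the elliptic fibration forcing $\textrm{kod}(S)\leq 1$, which the paper leaves implicit; your Hodge-index remark is fine, though it suffices to note adjunction gives $K_{\widehat{S}}\cdot F=0$ on the minimal model, so $K_{\widehat{S}}^2>0$ would force $F\equiv 0$.
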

Notice that in case $(iii)$ the central component $Y$ of $F$
satisfies $Y^2=-1$ but it is \emph{not} a rational curve.

\begin{proposition} \label{K2-min-8-chi }
Let $\lambda \colon S \lr T=(C_1 \times C_2)/G$ be a standard
isotrivial fibration and let $\hat{\alpha}_2 \colon \widehat{S} \lr
C_2/G$ be the relatively minimal model of $\alpha_2 \colon S \lr
C_2/G$. Then
\begin{equation*}
K_{\widehat{S}}^2 \leq 8 \chi(\mathcal{O}_{\widehat{S}})
\end{equation*}
 and equality holds if and only if either $S$ is a quasi-bundle or
 $g(C_1)=0$. Otherwise we have
\begin{equation*}
K_{\widehat{S}}^2 \leq 8 \chi(\mathcal{O}_{\widehat{S}})-2
\end{equation*}
and equality holds if and only if $\alpha_2$ contains exactly one
reducible fibre $F$, which is of type $\big(\frac{1}{2}, \,
\frac{1}{2})$ and which is \emph{not} a $(-1)$-fibre $($in
particular this implies $\widehat{S}=S)$.
\end{proposition}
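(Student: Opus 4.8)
The plan is to deduce everything from identity \eqref{eq:chi-S-hat}, i.e. $K_{\widehat{S}}^2 = 8\chi(\mathcal{O}_{\widehat{S}}) - \sum_{\tau \in \textrm{Crit}(\alpha_2)'}\delta(F_\tau)$, combined with the value/sign information on $\delta(F)$ collected in Corollary \ref{fibres}. Since every reducible fibre has $\delta(F)\ge 0$, the bound $K_{\widehat{S}}^2\le 8\chi(\mathcal{O}_{\widehat{S}})$ is immediate, and equality amounts to $\delta(F_\tau)=0$ for every reducible fibre of $\alpha_2$. I would then note that, by Theorem \ref{Serrano}, $\textrm{Crit}(\alpha_2)'=\emptyset$ is equivalent to $\textrm{Sing}(T)=\emptyset$, i.e. to $S$ being a quasi-bundle; and that, by Corollary \ref{fibres}, the presence of one reducible fibre with $\delta=0$ forces $g(C_1)=0$, while conversely $g(C_1)=0$ makes every reducible fibre a $(-1)$-fibre with $\delta=0$ (Corollary \ref{g=0}, Proposition \ref{delta=0}). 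This disposes of the first inequality and its equality case.

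For the ``otherwise'' part I assume $S$ is not a quasi-bundle and $g(C_1)\ge 1$, so $T$ is singular and $\alpha_2$ has at least one reducible fibre. When $g(C_1)\ge 2$, the two sporadic types $(\tfrac12,\tfrac13,\tfrac16)$ and $(\tfrac12,\tfrac14,\tfrac14)$ of Corollary \ref{fibres} cannot occur (they require $g(C_1)=1$), so \emph{every} reducible fibre satisfies $\delta(F)\ge 2$, with equality precisely when $F$ has type $(\tfrac12,\tfrac12)$ and is not a $(-1)$-fibre. Summing over the non-empty set of reducible fibres gives $\sum_\tau\delta(F_\tau)\ge 2$, with equality if and only if $\alpha_2$ has a unique reducible fibre and it is of that type; substituting into \eqref{eq:chi-S-hat} yields $K_{\widehat{S}}^2\le 8\chi(\mathcal{O}_{\widehat{S}})-2$ with the asserted equality case, and for such a fibre $\cF=0$, so $\pi\colon S\to\widehat{S}$ contracts nothing and $\widehat{S}=S$.

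The step I expect to be the real obstacle is the case $g(C_1)=1$, since there the type $(\tfrac12,\tfrac14,\tfrac14)$ already produces $\delta(F)=2$ and could manufacture a spurious extra equality case. The way around this is to observe that the generic fibre of $\hat{\alpha}_2$ is $C_1$, an elliptic curve, so $\hat{\alpha}_2\colon\widehat{S}\to C_2/G$ is a relatively minimal elliptic fibration and hence $K_{\widehat{S}}^2=0$ (\cite[Chapter III]{BPV}). Combined with \eqref{eq:chi-S-hat} this gives $\sum_\tau\delta(F_\tau)=8\chi(\mathcal{O}_{\widehat{S}})$; since the left-hand side is strictly positive (there is a reducible fibre, and $\delta>0$ for each because $g(C_1)\ge 1$), we get $\chi(\mathcal{O}_{\widehat{S}})\ge 1$ and therefore $\sum_\tau\delta(F_\tau)\ge 8$. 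Thus $K_{\widehat{S}}^2=0\le 8\chi(\mathcal{O}_{\widehat{S}})-8<8\chi(\mathcal{O}_{\widehat{S}})-2$, the inequality is strict and no equality occurs; this is consistent with the ``otherwise'' equality case because a Riemann--Hurwitz count shows that a fibre of type $(\tfrac12,\tfrac12)$ never exists when $g(C_1)=1$. Assembling the three cases completes the argument, the only non-combinatorial ingredient being the identity $K_{\widehat{S}}^2=0$ for relatively minimal elliptic surfaces.
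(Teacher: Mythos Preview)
Your argument is correct, and the first part (the inequality $K_{\widehat S}^2\le 8\chi$ and its equality case) coincides with the paper's. For the ``otherwise'' part, however, you take a genuinely different route. The paper does not split according to $g(C_1)$; instead it observes that when $g(C_1)\ge 1$ and there is at least one reducible fibre, Corollary~\ref{fibres} gives $\sum_\tau\delta(F_\tau)\ge\tfrac43>1$, and since $8\chi(\mathcal O_{\widehat S})-K_{\widehat S}^2=\sum_\tau\delta(F_\tau)$ is an \emph{integer}, this forces $K_{\widehat S}^2\le 8\chi(\mathcal O_{\widehat S})-2$. Equality then requires $\sum_\tau\delta(F_\tau)=2$, which leaves only two candidates: a single fibre of type $(\tfrac12,\tfrac14,\tfrac14)$ with $g(C_1)=1$, or a single fibre of type $(\tfrac12,\tfrac12)$ that is not a $(-1)$-fibre. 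The paper then eliminates the first candidate by a direct Noether-formula contradiction: with $\mathrm{Sing}(T)=\tfrac12(1,1)+2\times\tfrac14(1,1)$ and $g(C_1)=1$, Proposition~\ref{invariants-S} gives $K_S^2=-2$ and $e(S)=5$, so $K_S^2+e(S)=3$ is not divisible by $12$. Your treatment of the $g(C_1)=1$ case via $K_{\widehat S}^2=0$ (Kodaira's canonical bundle formula for relatively minimal elliptic fibrations) is more structural and in fact proves something stronger, namely $K_{\widehat S}^2\le 8\chi(\mathcal O_{\widehat S})-8$ in that case; the paper's approach, by contrast, is purely combinatorial and avoids invoking elliptic surface theory, at the price of the explicit numerical check.
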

\begin{proof}
By using formula \eqref{eq:chi-S-hat} and Corollary \ref{fibres} we
obtain $K_{\widehat{S}}^2 \leq 8 \chi(\mathcal{O}_{\widehat{S}})$,
and equality holds if and only if either
\begin{itemize}
\item[($i$)] $\textrm{Crit}(\alpha_2)'= \emptyset$, that is $S$ is
 a quasi-bundle, or
\item[($ii$)] $g(C_1)=0$.
\end{itemize}
Otherwise, since both $K_{\widehat{S}}^2$ and
$\chi(\mO_{\widehat{S}})$ are integers, we have $K_{\widehat{S}}^2
\leq 8 \chi(\mathcal{O}_{\widehat{S}})-2$, and equality holds if and
only if  $\alpha_2$ contains exactly one reducible fibre $F$ and
either
\begin{itemize}
\item[($i'$)] $g(C_1)=1$ and $F$
is a $(-1)$ fibre of type $\big(\frac{1}{2}, \, \frac{1}{4},
\frac{1}{4} \big)$, or
\item[($ii'$)] $F$ is of type
$\big(\frac{1}{2}, \, \frac{1}{2} \big)$, but it is not a
$(-1)$-fibre.
\end{itemize}
Assume now that case $(i')$ occurs. Therefore, by using
Proposition \ref{invariants-S} we would obtain $K_S^2=-2$ and
$e(S)=5$, contradicting Noether's formula. Therefore the only
possibility is $(ii')$.
\end{proof}

\begin{proposition} \label{improve-serrano}
Assume that $q(S) \geq 1$ and that $S$ is neither ruled nor a quasi-bundle.
Then, up to interchanging $C_1$ and $C_2$, the surface
 $\widehat{S}$ is the minimal model of $S$ and we have
\begin{equation} \label{bound-iso}
 K_{\widehat{S}}^2 \leq 8 \chi(\mathcal{O}_{\widehat{S}})-2.
\end{equation}
Equality holds if and
only if $\emph{Sing}(T)= 2 \times \frac{1}{2}(1,1)$, and in this case
$S=\widehat{S}$ is a minimal surface of general type.
\end{proposition}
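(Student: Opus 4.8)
The strategy is to combine Proposition~\ref{K2-min-8-chi } with a short analysis of the genera involved. First I would observe that, since $S$ is not ruled, the smooth fibres of $\alpha_2\colon S\lr C_2/G$, which are isomorphic to $C_1$, cannot be rational, so $g(C_1)\geq 1$; symmetrically $g(C_2)\geq 1$. From $q(S)=g(C_1/G)+g(C_2/G)\geq 1$ at least one of the two base genera is positive, hence after possibly interchanging $C_1$ and $C_2$ we may assume $g(C_2/G)\geq 1$. Under this normalization the relatively minimal model $\hat{\alpha}_2\colon\widehat{S}\lr C_2/G$ of $\alpha_2$ has $\widehat{S}$ a minimal surface (Section~\ref{sec:relatively minimal}); since $\widehat{S}$ is obtained from $S$ by contracting $(-1)$-curves and $S$, being non-ruled, has a unique minimal model, $\widehat{S}$ is the minimal model of $S$.

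Next, because $S$ is not a quasi-bundle and $g(C_1)\neq 0$, we are precisely in the ``Otherwise'' case of Proposition~\ref{K2-min-8-chi }. This gives $K_{\widehat{S}}^2\leq 8\chi(\mO_{\widehat{S}})-2$ at once, and moreover tells us that equality holds if and only if $\alpha_2$ has exactly one reducible fibre $F$, of type $\big(\frac{1}{2},\frac{1}{2}\big)$ and \emph{not} a $(-1)$-fibre, in which case also $\widehat{S}=S$.

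It then remains to recast this equality condition as the statement $\textrm{Sing}(T)=2\times\frac{1}{2}(1,1)$. Recall that the $HJ$-strings contained in the reducible fibres of $\alpha_2$ are in one-to-one correspondence with the points of $\textrm{Sing}(T)$, each such string being contracted by $\lambda$ onto one singular point (Theorem~\ref{Serrano}); combining this with Remark~\ref{card H} (every reducible fibre carries at least two $HJ$-strings), the presence of a single reducible fibre of type $\big(\frac{1}{2},\frac{1}{2}\big)$ is equivalent to $\textrm{Sing}(T)=2\times\frac{1}{2}(1,1)$. Indeed, two nodes on $T$ force $\alpha_2$ to have exactly one reducible fibre, carrying two $HJ$-strings of type $\frac{1}{2}(1,1)$, i.e. of type $\big(\frac{1}{2},\frac{1}{2}\big)$; and by Proposition~\ref{(-1)-fibre} such a fibre is a $(-1)$-fibre precisely when $g(C_1)=0$ (for a fibre of this type the two conditions of that proposition reduce to $\frac{1}{2}+\frac{1}{2}=1$ and $2g(C_1)-2=-2$), which has been excluded here. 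Finally, in the equality case $S=\widehat{S}$ is minimal; the two nodes have $h_x=0$, so Proposition~\ref{invariants-S} gives $K_S^2=\frac{8(g(C_1)-1)(g(C_2)-1)}{|G|}\geq 0$, while Corollary~\ref{chi-S} together with Remark~\ref{geq 3} gives $K_S^2=8\chi(\mO_S)-2$; as $\chi(\mO_S)\in\mZ$ this excludes $K_S^2=0$, so $K_S^2>0$, and a minimal non-ruled surface with positive canonical self-intersection is of general type.

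The substantive work is already packaged in Proposition~\ref{K2-min-8-chi } and Corollary~\ref{fibres}; the one step I expect to require care is the bookkeeping of the last paragraph, namely the correspondence between the $HJ$-strings of $\alpha_2$ and the singular points of $T$, together with the elementary remark (via Proposition~\ref{(-1)-fibre}) that a fibre of type $\big(\frac{1}{2},\frac{1}{2}\big)$ cannot be a $(-1)$-fibre as soon as $g(C_1)\geq 1$.
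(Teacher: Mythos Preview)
Your proof is correct and follows essentially the same route as the paper: normalize so that $g(C_2/G)\geq 1$, observe $g(C_1)\geq 1$ from non-ruledness, and then invoke Proposition~\ref{K2-min-8-chi }. You add a small amount of extra detail the paper leaves implicit---namely, you spell out the bijection between $\textrm{Sing}(T)$ and the $HJ$-strings appearing in the fibres of $\alpha_2$, and for the ``general type'' conclusion you compute $K_S^2\geq 0$ directly from Proposition~\ref{invariants-S} (using $h_x=0$ for nodes) rather than appealing to nefness of $K_S$ on a minimal non-ruled surface; both lead to the same integrality argument forcing $K_S^2>0$.
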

\begin{proof}
Consider the relatively minimal fibration $\hat{\alpha}_2 \colon \widehat{S} \lr C_2/G$.
 Since $q(S) \geq 1$, up to interchanging $C_1$ and $C_2$
  we can suppose $g(C_2/G) \geq 1$, hence $\widehat{S}$ is
the minimal model of $S$. We are also assuming
 that $S$ is not ruled, so $g(C_1) \geq 1$ and Proposition
\ref{K2-min-8-chi } gives $K_{\widehat{S}}^2 \leq 8
\chi(\mathcal{O}_{\widehat{S}})-2$. Equality occurs if and only if
$\alpha_2$ contains exactly one reducible fibre, which is of type $\big(\frac{1}{2}, \,
\frac{1}{2})$ and which is not a $(-1)$-fibre; this implies
$S=\widehat{S}$, hence $S$ is minimal and consequently $K_S$ is nef.
Therefore relation $K_S^2= 8 \chi(\mO_S)-2$ yields
$K_S^2 \geq 6$, that is $S$ is of general type.
\end{proof}

\begin{corollary} \label{nongen}
Let $S$ be a standard isotrivial fibration, with $\emph{kod}(S)=0$ or
$1$ and $\chi(\mO_S)=0$. Then $S$ is a quasi-bundle.
\end{corollary}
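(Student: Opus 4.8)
The plan is a short argument by contradiction, combining Proposition \ref{improve-serrano} with the elementary fact that a minimal surface of non-negative Kodaira dimension has non-negative $K^2$.

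First I would extract the numerical consequences of the hypotheses. Since $\textrm{kod}(S) \geq 0$, the surface $S$ is not ruled. From $\chi(\mO_S) = 1 - q(S) + p_g(S) = 0$ together with $p_g(S) \geq 0$ we obtain $q(S) = 1 + p_g(S) \geq 1$.

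Now suppose, for contradiction, that $S$ is not a quasi-bundle. Then $S$ is neither ruled nor a quasi-bundle and $q(S) \geq 1$, so Proposition \ref{improve-serrano} applies: up to interchanging $C_1$ and $C_2$, the relatively minimal model $\widehat{S}$ of $\alpha_2$ is the minimal model of $S$, and $K_{\widehat{S}}^2 \leq 8\,\chi(\mO_{\widehat{S}}) - 2$. Since $\widehat{S}$ is obtained from $S$ by contracting $(-1)$-curves and $\chi$ is a birational invariant of smooth projective surfaces, $\chi(\mO_{\widehat{S}}) = \chi(\mO_S) = 0$, hence $K_{\widehat{S}}^2 \leq -2$. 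On the other hand $\widehat{S}$ is minimal with $\textrm{kod}(\widehat{S}) = \textrm{kod}(S) \geq 0$, so $K_{\widehat{S}}$ is nef and $K_{\widehat{S}}^2 \geq 0$; this is the desired contradiction. Therefore $S$ is a quasi-bundle.

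There is essentially no hard step: the statement is a direct consequence of the sharp inequality established in Proposition \ref{improve-serrano}. The only point deserving a line of care is verifying that all of its hypotheses --- $q(S) \geq 1$, $S$ not ruled, $S$ not a quasi-bundle --- are in force, which is immediate from $\textrm{kod}(S) \in \{0,1\}$, $\chi(\mO_S) = 0$, and the contradiction hypothesis; and recalling that both $\textrm{kod}$ and $\chi(\mO)$ are unaffected by the contractions linking $S$ and $\widehat{S}$.
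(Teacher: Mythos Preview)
Your argument is correct and essentially identical to the paper's: both deduce $q(S)\geq 1$ from $\chi(\mO_S)=0$, pass to the minimal model $\widehat{S}$, and use the main inequality to force a contradiction with $K_{\widehat{S}}^2\geq 0$. The only cosmetic difference is that the paper cites Proposition~\ref{K2-min-8-chi } (using the equality case of $K_{\widehat{S}}^2\leq 8\chi$) while you cite Proposition~\ref{improve-serrano} (using $K_{\widehat{S}}^2\leq 8\chi-2$); these are proven back-to-back from the same machinery.
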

\begin{proof}
Since $\chi(\mO_S)=0$ we obtain $q(S) \geq 1$, hence
 $\widehat{S}$ is the minimal model of $S$. Now
$\textrm{kod}(S)=0$ or $1$  yields
$0=K_{\widehat{S}}^2=8\chi(\mO_{\widehat{S}})$,
so Proposition \ref{K2-min-8-chi }
implies that $S$ is a quasi-bundle.
\end{proof}

\begin{remark} \label{applies}
If $\emph{kod}(S)=0$, then Corollary $\ref{nongen}$ applies when $S$ is
  either abelian or bielliptic. If instead $\emph{kod}(S)=1$, it applies
  when $S$ is any
properly elliptic surface with $\chi(\mO_S)=0$ $($examples of such
surfaces are described in \cite{Se92}$)$. \\
Finally, observe that there exist $($non-minimal$)$ properly elliptic surfaces
  with $\chi(\mO_S)=1$ that are  standard isotrivial fibrations
but not quasi-bundles, see Example $\ref{Example-1}$.
This show that the assumption $\chi(\mO_S)=0$ in Corollary $\ref{nongen}$
cannot be dropped.
\end{remark}

Under the further assumption that $K_{\widehat{S}}$ is ample, we can improve
 inequality \eqref{bound-iso} as follows.

\begin{proposition} \label{improve-serrano-ample}
Assume that $q(S) \geq 1$,
$S$ is not a quasi-bundle and $K_{\widehat{S}}$ is ample.
Then, up to interchanging $C_1$ and $C_2$, the surface
 $\widehat{S}$ is the minimal model of $S$ and we have
\begin{equation} \label{bound-iso-ample}
 K_{\widehat{S}}^2 \leq 8 \chi(\mathcal{O}_{\widehat{S}})-5.
\end{equation}
\end{proposition}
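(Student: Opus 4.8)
The plan is to deduce \eqref{bound-iso-ample} from the identity \eqref{eq:chi-S-hat}, which turns it into the purely numerical assertion $\sum_{\tau \in \textrm{Crit}(\alpha_2)'} \delta(F_\tau) \geq 5$. First I would normalise the situation as in Proposition \ref{improve-serrano}: since $q(S) \geq 1$ we may, after interchanging $C_1$ and $C_2$, assume $g(C_2/G) \geq 1$, so that $\widehat S$ is the minimal model of $S$; and since $K_{\widehat S}$ is ample, $\widehat S$ is a minimal surface of general type, in particular not ruled, so $g(C_1) \geq 1$. The extra observation I would add here is that $g(C_1) = 1$ cannot occur: in that case $\hat\alpha_2 \colon \widehat S \lr C_2/G$ would be a relatively minimal elliptic fibration, forcing $K_{\widehat S}^2 = 0$ and contradicting ampleness. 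Thus $g(C_1) \geq 2$, and since $S$ is not a quasi-bundle, $\textrm{Crit}(\alpha_2)' \neq \emptyset$.

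Second, I would exploit that ampleness of $K_{\widehat S}$ forbids any smooth rational curve $D \subset \widehat S$ with $D^2 \geq -2$ (adjunction gives $D^2 = -2 - K_{\widehat S}\cdot D \leq -3$); in particular $\widehat S$ contains no $(-2)$-curve. Combined with Corollary \ref{fibres}, this shows that \emph{every} reducible fibre $F$ of $\alpha_2$ satisfies $\delta(F) > 2$: once $g(C_1) \geq 2$, the only case of Corollary \ref{fibres} with $\delta(F) \leq 2$ is that $F$ has type $\big( \frac{1}{2}, \, \frac{1}{2} \big)$ and is not a $(-1)$-fibre, and then its central component $Y$ (with $Y^2 = -1$, $g(Y) \geq 1$) and the two $(-2)$-curves of $\mathscr{H}(F)$ are never contracted by $\pi \colon S \lr \widehat S$, so the latter survive in $\widehat S$ --- impossible. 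Since $\sum_\tau \delta(F_\tau)$ is a positive integer with each summand exceeding $2$, we are already done whenever $\alpha_2$ has two or more reducible fibres: then the sum exceeds $4$, hence is $\geq 5$.

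The remaining, and only delicate, case is that $\alpha_2$ has exactly one reducible fibre $F$, with $\delta(F) \in \{3,4\}$. Here $\textrm{Sing}(T)$ is exactly the set of $r = |\mathscr{H}(F)| \geq 3$ singularities resolved by the $HJ$-strings $\mE_1, \ldots, \mE_r$ of $F$. I would first show that $F$ must be a $(-1)$-fibre: for a non-$(-1)$ reducible fibre one has $\cF = 0$ and no contractions, and a short estimate (using \eqref{selfint-1} and Remark \ref{geq 3}) shows that if all $b_1(\mE_i) \geq 3$ then $\delta(F) \notin \{3,4\}$, while if some $b_1(\mE_i) = 2$ the corresponding string carries a $(-2)$-curve surviving in $\widehat S$. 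So $F$ is a $(-1)$-fibre in genus $g(C_1) \geq 2$ with $\delta(F) \in \{3,4\}$, and the plan is then to constrain its type by: $\frac{1}{3} \sum_i B(\mE_i) \in \mathbb{Z}$ (Corollary \ref{chi-S}) and $\sum_i \frac{q_i}{n_i}, \sum_i \frac{q_i'}{n_i} \in \mathbb{Z}$ (Proposition \ref{selfintersection}, Corollary \ref{integers}); an even number of nodes among the $\mE_i$ (Corollary \ref{nodes}); Noether's formula via Proposition \ref{invariants-S}; and the condition that the contraction algorithm of Subsection \ref{sub:contractible-comp} leave no $(-2)$-curve on $\widehat S$. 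Combined with the continued-fraction inequalities of Section \ref{sec:prel} (Propositions \ref{decreasing}, \ref{B-increasing}, Corollary \ref{cor-B-increasing}, and Riemenschneider duality \eqref{duality}) and with the classification of $(-1)$-fibres in Appendix A, this should cut the possibilities down to a short explicit list of small genus, each entry of which is then eliminated --- either by an abelianization argument showing the $G$-cover $C_2 \lr C_2/G$ would need a single branch point (impossible), or by a \verb|GAP4| search over the finitely many relevant groups (all of order $< 2000$), checking that $\alpha_2$ cannot possess exactly one reducible fibre of that type while $K_{\widehat S}$ remains ample.

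The hard part is precisely this last step: the earlier reductions are soft, but handling the single-reducible-fibre case requires an honest enumeration of $HJ$-string configurations with $\delta(F) \in \{3,4\}$, careful tracking of which $(-2)$-curves are or are not contracted in $\widehat S$, and a final computer-assisted verification that the remaining numerically admissible types are not geometrically realisable.
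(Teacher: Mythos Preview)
Your outline is correct and essentially matches the paper's proof: after the same normalisation, the paper uses \eqref{eq:chi-S-hat} and Corollary \ref{fibres} to reduce to a single reducible fibre $F$ with $\delta(F)\in\{3,4\}$, then (Lemmas \ref{claim-3} and \ref{claim-4}) uses ampleness and the estimates \eqref{B1}--\eqref{B4} to pin down the type of $F$ as $\big(\frac{2}{3},\frac{1}{6},\frac{1}{6}\big)$ (genus $2$, $\delta=3$) or $\big(\frac{3}{4},\frac{1}{8},\frac{1}{8}\big)$ (genus $3$, $\delta=4$), and excludes each via an explicit group-theoretic analysis of the finitely many $G$ acting in that genus with the required signature. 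Your direct exclusion of $g(C_1)=1$ via the elliptic-fibration argument is a clean shortcut the paper does not state separately.

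One small caution on your last paragraph: the final elimination is not quite ``abelianization shows the $G$-cover would need a single branch point''. What actually happens is that, having fixed $\ell_1$ to produce the unique reducible fibre, the remaining $\ell_2,\ldots,\ell_s$ must generate cyclic subgroups acting freely on $C_1$, hence must lie in $G\setminus\mathscr{S}$ where $\mathscr{S}=\bigcup_{\sigma\in G}\bigcup_i\langle\sigma g_i\sigma^{-1}\rangle$; one then checks, group by group, that no such choice is compatible with the product relation $\ell_1\cdots\ell_s\in[G,G]$ --- sometimes by a direct combinatorial argument, sometimes via conjugacy-class constraints (Corollary \ref{cor:fix}), and in one case ($G=G(32,9)$) via a short \verb|GAP4| computation.
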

\begin{proof}
By Proposition \ref{improve-serrano} we must show that,
 if $K_{\widehat{S}}$ is ample, the two cases
$K_{\widehat{S}}^2 = 8 \chi(\mathcal{O}_{\widehat{S}})-3$ and
$K_{\widehat{S}}^2 = 8 \chi(\mathcal{O}_{\widehat{S}})-4$ do not
occur. This will be consequence of Lemmas \ref{claim-3} and
\ref{claim-4} below.
\begin{lemma} \label{claim-3}
If $K_{\widehat{S}}$ is ample, then
$K_{\widehat{S}}^2 = 8 \chi(\mathcal{O}_{\widehat{S}})-3$ does not occur.
\end{lemma}
By contradiction, assume that this case occurs. Since $\widehat{S}$
is of general type, by formula \eqref{eq:chi-S-hat} and Corollary \ref{fibres}
it follows
that $\alpha_2 \colon S \lr C_2/G$ contains exactly one reducible fibre $F$,
 which satisfies $\delta(F)=3$. Assuming that $F$ is of type
 $\type$, there are two subcases. \\ \\
\emph{Subcase} $(1)$. $F$ is not a $(-1)$-fibre. This implies
$S=\widehat{S}$ and $\sum_{i=1}^r B \big(\frac{q_i}{n_i} \big)=9$.
Since $\sum_{i=1}^r \frac{q_i}{n_i} \in \mathbb{Z}$, by looking at
the table in Appendix B we see that the only possibility for the
type of $F$ is $\big(\frac{1}{3}, \, \frac{2}{3} \big)$, see also
\cite[Proposition 4.1]{MiPol08}, and this contradicts the
ampleness of the canonical bundle. Hence $(1)$ does not occur.
\\ \\
\emph{Subcase} $(2)$. $F$ is a $(-1)$-fibre. By using estimates
\eqref{B1}, \eqref{B2}, \eqref{B3}, \eqref{B4}, we can check that the
only possibilities for the type of $F$ are
 $\big(\frac{2}{3}, \, \frac{1}{6}, \, \frac{1}{6} \big)$ and
 $\big(\frac{1}{2}, \, \frac{1}{8}, \, \frac{3}{8} \big)$,
 see also Appendix A.
But in the latter case $K_{\widehat{S}}$
would not be ample, hence $F$ is necessarily of type
$\big(\frac{2}{3}, \, \frac{1}{6}, \, \frac{1}{6} \big)$. Therefore $g(C_1)=2$.
Moreover, since $F$ is a $(-1)$-fibre, we have $g(C_1/G)=0$; setting $\mg':=g(C_2/G)$,
it follows that $G$ is both $(0 \, | \, \mathbf{m})$-generated and
$(\mg' \, | \, \mathbf{n})$-generated, where $\mathbf{m}:=(m_1, \ldots, m_r)$ and
$\mathbf{n}:=(n_1, \ldots, n_s)$; we will denote by
\begin{equation} \label{gen-vec}
\mathcal{V}:=\{g_1, \ldots, g_r \} \quad \textrm{and} \quad
\mathcal{W}:=\{\ell_1, \ldots, \ell_s; \; h_1, \ldots,
h_{2\mathfrak{g}'} \}
\end{equation}
the corresponding generating vectors, see Section \ref{sec:prel}. The group $G$ acts in genus
$2$ with rational quotient; moreover, since
\begin{equation} \label{SingT-1}
\textrm{Sing}(T)=\frac{1}{3}(1, \,2)+ 2 \times \frac{1}{6}(1, \, 1),
\end{equation}
at least one of the $m_i$ must be divisible by $6$. Looking at
\cite[p. 252]{Br90} and \cite[Appendix A]{Pol07}, we see that there are
at most two possibilities: \\
\begin{itemize}
\item[$(2a)$] $G=\mathbb{Z}_2 \times \mathbb{Z}_6, \quad
\mathbf{m}=(2,\, 6^2)$; \item[$(2b)$] $G=\mathbb{Z}_2 \ltimes
((\mZ_2)^2 \times \mZ_3)=G(24, \, 8), \quad \mathbf{m}= (2,\, 4,
\, 6)$,
\end{itemize}
$ $ \\
where $G=G(24, \, 8)$ means that $G$ has the label number $8$ in the
\verb|GAP4| list of groups of order $24$, see \cite{Pol07}.
Let us analyze $(2a)$ and $(2b)$ separately. \\ \\
\emph{Assume $(2a)$ occurs}. Set $G=\langle x, \, y \, |
\,x^2=y^6=[x, \, y]=1 \rangle$. Up to automorphisms, we may suppose
\begin{equation*}
\begin{split}
g_1 &=x, \quad g_2=xy^{-1}, \quad g_3=y, \\
\ell_1 & =y.
\end{split}
\end{equation*}
Set $\mathscr{S}:=\langle g_1 \rangle \cup \langle g_2 \rangle \cup
\langle g_3 \rangle$. Since $G$ is abelian, $s \geq 2$ (Remark
\ref{abelian-gen}); moreover there is just one reducible fibre, so
we must have
\begin{equation*}
\langle \ell_2 \rangle  \cup \cdots \cup \langle \ell_s \rangle
\subseteq G \setminus \mathscr{S}= \{xy^2, \, xy^4 \}.
\end{equation*}
But this is impossible, since $(xy^2)^2=y^4 \in \mathscr{S}$ and
$(xy^4)^2=y^2 \in \mathscr{S}$. Therefore $(2a)$
does not occur. \\ \\
\emph{Assume $(2b)$ occurs}. The presentation of $G=G(24, \,8)$ is
\begin{equation*}
\begin{split}
G=\langle x,\, y,\, z,\, w \, |& \, x^2=y^2=z^2=w^3=1, \\
& [y,\,z]=[y, \, w]=[z, \, w]=1, \\
& xyx^{-1}=y, \, xzx^{-1}=zy, \, xwx^{-1}=w^{-1} \rangle.
\end{split}
\end{equation*}
It is no difficult to check that this group contains exactly one
conjugacy class of elements of order $3$, namely
$\textrm{Cl}(w)=\{w, \, w^{-1}\}$.
In particular every element of order $3$ is conjugate to its
inverse, hence Corollary \ref{cor:fix} implies that if $T$
contains some singularity of type $\frac{1}{3}(1, \,2)$, it must
also contain some singularity of type $\frac{1}{3}(1, \, 1)$. But
this
contradicts \eqref{SingT-1}, hence $(2b)$ must be excluded too. \\
\\This completes the proof of Lemma \ref{claim-3}.

\begin{lemma} \label{claim-4}
If $K_{\widehat{S}}$ is ample, then
$K_{\widehat{S}}^2 = 8 \chi(\mathcal{O}_{\widehat{S}})-4$ does not occur.
\end{lemma}
Again, assume by contradiction that this case occurs. As in the proof of
Lemma \ref{claim-3}, we see that $\alpha_2 \colon S \lr C_2/G$ contains just
one reducible fibre, which must be a $(-1)$-fibre with $\delta(F)=4$.
By using estimates \eqref{B1}, \eqref{B2},
\eqref{B3}, \eqref{B4}, we see that the only possibilities for the type of $F$ are
 $\big(\frac{3}{4}, \, \frac{1}{8}, \, \frac{1}{8} \big)$ and
 $\big(\frac{1}{2}, \, \frac{1}{12}, \, \frac{5}{12} \big)$.
One immediately checks that in the latter case $K_{\widehat{S}}$
would not be ample, hence $F$ is necessarily of type
$\big(\frac{3}{4}, \, \frac{1}{8}, \, \frac{1}{8} \big)$.
Therefore $g(C_1)=3$. Moreover, since $F$ is a $(-1)$-fibre we
have $g(C_1/G)=0$; setting $\mg':=g(C_2/G)$, it follows that $G$
is both $(0 \, | \, \mathbf{m})$-generated and $(\mg' \, | \,
\mathbf{n})$-generated, where $\mathbf{m}:=(m_1, \ldots, m_r)$ and
$\mathbf{n}:=(n_1, \ldots, n_s)$; we will denote the corresponding
generating vectors as in \eqref{gen-vec}. The group $G$ acts in
genus $3$ with rational quotient; moreover, since
\begin{equation} \label{SingT-2}
\textrm{Sing}(T)=\frac{1}{4}(1, \,3)+ 2 \times \frac{1}{8}(1, \, 1),
\end{equation}
at least one of the $m_i$ must be divisible by $8$. Looking at
\cite[p. 252]{Br90} and \cite[Appendix A]{Pol07}, we see that there
are at most five possibilities:
\begin{itemize}
\item[$(a)$] $G=\mZ_2 \times \mZ_8, \quad \mathbf{m}=(2, \, 8^2)$,
\item[$(b)$] $G=D_{2, \, 8, \, 5}=G(16, \, 6), \quad \mathbf{m}=(2, \, 8^2)$,
\item[$(c)$] $G=\mZ_2 \ltimes (\mZ_2 \times \mZ_8)=G(32, \, 9), \quad \mathbf{m}=(2,\,4,\,8)$,
\item[$(d)$] $G=\mZ_2 \ltimes D_{2,8,5}=G(32, \, 11), \quad \mathbf{m}=(2,\,4,\,8)$,
\item[$(e)$] $G=\mathcal{S}_3 \ltimes (\mZ_4)^2=G(96, \, 64), \quad \mathbf{m}=(2, \, 3, \, 8)$.
\end{itemize}
We first rule out Case $(a)$. Set
\begin{equation*}
G= \langle x,\, y, \, | \, x^2=y^8=[x,\,y]=1 \rangle.
\end{equation*}
Up to automorphisms, we may assume
\begin{equation*}
\begin{split}
g_1& = x, \quad g_2=xy^{-1}, \quad g_3=y, \\
\ell_1&=y.
\end{split}
\end{equation*}
Set $\mathscr{S}:=\langle g_1 \rangle \cup \langle g_2 \rangle \cup
\langle g_3 \rangle$. Since $G$ is abelian, $s \geq 2$. Moreover
there is just one reducible fibre, so we must have
\begin{equation*}
\langle \ell_2 \rangle  \cup \cdots \cup \langle \ell_s \rangle
\subseteq G \setminus \mathscr{S}= \{xy^2, \, xy^4, \, xy^6 \}.
\end{equation*}
But $(xy^2)^2=(xy^6)^2=y^4 \in
\mathscr{S}$, so we obtain $\ell_2= \ldots =\ell_s=xy^4$. On the other hand,
\begin{equation*}
1=\ell_1 \ell_2 \cdots \ell_s \cdot
\Pi_{i=1}^{\mathfrak{g}'} [h_i,h_{i+\mathfrak{g}'}]= \ell_1 \ell_2 \cdots \ell_s,
\end{equation*}
so $y = \ell_1 \in \langle xy^4 \rangle$ which is a contradiction.
Hence $(a)$ must be excluded. \\ \\
Now we rule out Cases $(b), \ldots, (e)$. Notice that
\eqref{SingT-2} implies that the group $G$ must satisfy the
following condition:
\begin{itemize}
\item[$(*)$] there exists an element $g \in G$ such that $|g|=8$ and
$g$ is \emph{not} conjugate to $g^3$, $g^5$, $g^7$.
\end{itemize}
By using \verb|GAP4| (or by means of tedious hand-made
computations) we can easily check that in Cases $(b)$, $(d)$ and
$(e)$ every $g \in G$ with $|g|=8$ is conjugate to $g^5$, so condition $(\ast)$
is not satisfied. Therefore we are only left
to exclude $(c)$.
In Case $(c)$ the presentation of $G=G(32, \, 9)$ is
\begin{equation*}
G=\langle x ,\, y, \, z, \, | \,
x^2=y^2=z^8=1, \, [x,y]=[y,z]=1, \, xzx^{-1}=yz^3 \rangle.
\end{equation*}
By simple \verb|GAP4| scripts one checks that
the automorphism group $\textrm{Aut}(G)$ has order $64$, and that $G$
admits precisely $64$
generating vectors $\mathcal{V}=\{g_1, \, g_2, \, g_3 \}$
of type $(0 \,| \, 2, \, 4,\, 8)$,
 which form a unique orbit for the action of $\textrm{Aut}(G)$. Hence,
up to automorphisms, we may assume that $\mathcal{V}$ is as follows:
\begin{equation*}
g_1=x, \quad g_2=xz^{-1}, \quad g_3=z.
\end{equation*}
Set $\mg'=g(C_2/G)$ and
let $\mathcal{W}:=\{\ell_1, \ldots, \ell_s; \; h_1, \ldots,
h_{2\mathfrak{g}'} \}$ be the generating vector of type
$(\mg' \, | \, n_1, \ldots, n_s)$ inducing the covering $C_2 \lr C_2/G$.
The group $G$ contains no elements of order greater than $8$,
so by \eqref{SingT-2} we may assume $\ell_1=z$, and since
$z \notin [G,\,G]= \langle yz^2 \rangle$, we have $s \geq 2$.
Put
\begin{equation*}
\mathscr{S}:= \bigcup _{\sigma \in G} \bigcup_{i=1}^3 \langle \sigma g_i
\sigma^{-1} \rangle;
\end{equation*}
since $\alpha_2 \colon S \lr C_2/G$ contains
exactly one reducible fibre, we obtain
\begin{equation*}
\langle \ell_2 \rangle  \cup \cdots \cup \langle \ell_s \rangle
\subseteq G \setminus \mathscr{S}= \{yz^2, \, xz^2, \, xyz^2x, \, zxz, \,
z^2x, \, y, \, xy\} \subset \langle x, \, y, \, z^2 \rangle.
\end{equation*}
In particular this implies
\begin{equation} \label{exclude-1}
\ell_2 \ell_3 \ldots \ell_s \in \langle x, \, y, \, z^2 \rangle.
\end{equation}
On the other hand
\begin{equation} \label{exclude-2}
\ell_1\ell_2 \ldots \ell_s = \bigg(\Pi_{i=1}^{\mathfrak{g}'}
[h_i,h_{i+\mathfrak{g}'}]\bigg)^{-1} \in [G, \, G]=\langle yz^2 \rangle \subset \langle
x, \, y, \, z^2 \rangle,
\end{equation}
hence \eqref{exclude-1} and \eqref{exclude-2} together
imply $z=\ell_1 \in \langle
x, \, y, \, z^2 \rangle$, a contradiction. Therefore Case $(c)$ does not occur, and
this shows Lemma \ref{claim-4}. \\ \\
The proof of Proposition \ref{improve-serrano-ample} is now complete.
\end{proof}

In \cite{Se96} Serrano showed that any isotrivial fibred surface $X$
satisfies $K_X^2 \leq 8\chi(\mO_X)$. Moreover, S. L. Tan proved in
\cite{Tan96} that equality holds if and only if $X$ is either ruled
or isomorphic to a quasi-bundle. By using Propositions
\ref{improve-serrano} and \ref{improve-serrano-ample}, we are led to
the following strengthening of Serrano's and Tan's results.

\begin{theorem} \label{arbitrary-iso}
Let $\varphi \colon X \lr C$ be any relatively minimal
isotrivial fibration, with $X$
non ruled and $g(C) \geq 1$. If
 $X$ is not isomorphic to a quasi-bundle, we have
\begin{equation} \label{eq:gen-type-1}
K_X^2 \leq 8 \chi(\mO_X)-2
\end{equation}
and if equality holds then $X$ is a minimal surface of general
type whose canonical model
has precisely two ordinary double points as singularities. \\
Moreover, under the further assumption that $K_X$ is ample, we
have
\begin{equation} \label{eq:gen-type-1-ample}
K_X^2 \leq 8 \chi(\mO_X)-5.
\end{equation}
Finally, both inequalities \eqref{eq:gen-type-1} and
\eqref{eq:gen-type-1-ample} are sharp.
\end{theorem}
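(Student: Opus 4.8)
The strategy is to bootstrap from the case of a standard isotrivial fibration, where all the real work has already been carried out in Propositions~\ref{improve-serrano} and~\ref{improve-serrano-ample}. By the monodromy argument (Remark~\ref{no-loss-if-standard}, \cite[Section~2]{Se96}) there exist a standard isotrivial fibration $\lambda\colon S\lr T=(C_1\times C_2)/G$, a birational map $T\dashrightarrow X$ and an isomorphism $C_2/G\xrightarrow{\cong}C$ fitting in a commutative square, so that $\alpha_2\colon S\lr C_2/G$ and $\varphi\colon X\lr C$ are birationally equivalent fibrations over $C$. Since $X$ is non-ruled and $g(C)\geq 1$, the generic fibre of $\varphi$ — which equals the generic fibre $C_1$ of $\alpha_2$ — has genus $\geq 1$, so $g(C_1)\geq 1$; by \cite[Chapter~III, Proposition~8.4]{BPV} the relatively minimal model of $\alpha_2$ over $C$ is then unique, and as $\varphi$ is itself relatively minimal this forces $X\cong\widehat S$ over $C$, where $\hat{\alpha}_2\colon\widehat S\lr C_2/G$ is the relatively minimal model constructed in Subsection~\ref{sub:contractible-comp}. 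Moreover $S$ is not a quasi-bundle: otherwise $T=S$ would be smooth, $\alpha_2$ already relatively minimal, and $X\cong\widehat S=S$ a quasi-bundle, against the hypothesis. Finally $q(S)=g(C_1/G)+g(C_2/G)\geq g(C)\geq 1$, and $S$ is non-ruled since it is birational to $X$.

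Now all hypotheses of Proposition~\ref{improve-serrano} hold, and since $g(C_2/G)=g(C)\geq 1$ no interchange of $C_1,C_2$ is needed: $\widehat S$ is the minimal model of $S$ and $K_{\widehat S}^2\leq 8\chi(\mO_{\widehat S})-2$, which via $X\cong\widehat S$ is precisely \eqref{eq:gen-type-1}. If equality holds, Proposition~\ref{improve-serrano} gives $\textrm{Sing}(T)=2\times\frac{1}{2}(1,1)$ and $X=S=\widehat S$ a minimal surface of general type. For the statement on the canonical model I would then check that the two $(-2)$-curves obtained by resolving the two nodes of $T$ — they are the two $HJ$-strings (each a single $(-2)$-curve) of the unique reducible fibre $F$ of $\alpha_2$, which is of type $\big(\frac{1}{2},\frac{1}{2}\big)$ and is not a $(-1)$-fibre — are the \emph{only} $(-2)$-curves on $X$: a smooth rational curve of self-intersection $-2$ must lie in a fibre of $\alpha_2$ because $g(C)\geq 1$, the smooth fibres ($\cong C_1$, of genus $\geq1$) and the irreducible multiple fibres (reduced part $C_1/H$, of self-intersection $0$) contribute none, and the central component $Y$ of $F$ satisfies $Y^2=-1$ and is not rational by case $(iii)$ of Corollary~\ref{fibres}. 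Hence the canonical morphism of $X$ contracts exactly those two disjoint $(-2)$-curves, so the canonical model of $X$ is $T$ itself and has precisely two ordinary double points. Under the further assumption that $K_X$ is ample, $K_{\widehat S}$ is ample, so Proposition~\ref{improve-serrano-ample} applies verbatim and gives $K_X^2\leq 8\chi(\mO_X)-5$, i.e.\ \eqref{eq:gen-type-1-ample}.

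It remains to establish sharpness, and here I would exhibit explicit standard isotrivial fibrations realizing equality: for \eqref{eq:gen-type-1} one needs a finite group $G$ acting on curves $C_1,C_2$ with $g(C_1)\geq 1$, $g(C_2/G)\geq 1$ and $\textrm{Sing}(T)=2\times\frac{1}{2}(1,1)$ — equivalently, $\alpha_2$ has a single reducible fibre, of type $\big(\frac{1}{2},\frac{1}{2}\big)$ and not a $(-1)$-fibre — while for \eqref{eq:gen-type-1-ample} one needs in addition that $K_{\widehat S}$ be ample and $\sum_{F}\delta(F)=5$. Such data are produced through the Riemann Existence Theorem (Proposition~\ref{riemann ext}) from suitably chosen generating vectors, the subtle point being to guarantee that no extra cyclic quotient singularity appears on $T$; concrete instances can be taken from \cite{MiPol08} or constructed in the spirit of Theorem~\ref{existence}. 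I expect this last step — producing the extremal examples and certifying the exact shape of $\textrm{Sing}(T)$ — to be the genuine obstacle, whereas the inequalities themselves and the analysis of the equality case follow almost formally once the identification $X\cong\widehat S$ is in place and Propositions~\ref{improve-serrano} and~\ref{improve-serrano-ample} are invoked.
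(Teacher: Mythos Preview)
Your proposal is correct and follows essentially the same strategy as the paper: identify $X$ with the relatively minimal model $\widehat{S}$ of a standard isotrivial fibration and then invoke Propositions~\ref{improve-serrano} and~\ref{improve-serrano-ample}. Two minor remarks: (i) you actually supply more than the paper does on the equality case, since the paper simply cites Proposition~\ref{improve-serrano} without spelling out why the canonical model has exactly two nodes, whereas your argument that the two $(-2)$-curves in the unique reducible fibre are the \emph{only} $(-2)$-curves on $X$ is the right justification; (ii) for sharpness the paper does not construct anything new but points to explicit examples already in the literature, namely relatively minimal isotrivial fibrations with $g(C)=1$, $p_g=q=1$, $K_X^2=6$ from \cite[Section~7.1]{Pol07} for \eqref{eq:gen-type-1}, and with $p_g=q=1$, $K_X^2=3$, $K_X$ ample from \cite[Section~5.5]{MiPol08} for \eqref{eq:gen-type-1-ample} --- so rather than building examples via Theorem~\ref{existence} you should simply cite these.
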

\begin{proof}
By Remark \ref{no-loss-if-standard} there exist a standard
isotrivial fibration $\lambda \colon S \lr T=(C_1 \times C_2)/G$ and a
birational map $T \dashrightarrow X$ such that the diagram
\begin{equation} \label{diagram-iso}
\xymatrix{ S \ar[d]_{\lambda}  \ar[rd]^{\psi} \\
T \ar@{-->}[r]  \ar[d]_{\sigma_2} &
 X \ar[d]^{\varphi} \\
 C_2/G \ar[r]^{\cong} & C}
\end{equation}
commutes. Since $\varphi$ is relatively minimal and $g(C) \geq 1$, the
surface $X$ is a minimal model. As $X$ is not ruled $K_X$ is nef, so the
rational map $\psi \colon S \lr X$ is actually a morphism, which
induces an isomorphism $\hat{\psi} \colon \widehat{S} \lr X$. Thus
Propositions \ref{improve-serrano} and \ref{improve-serrano-ample}
 imply inequalities \eqref{eq:gen-type-1} and \eqref{eq:gen-type-1-ample}.
Finally, both these inequalities are sharp, in fact:
\begin{itemize}
\item there exist examples of relatively minimal isotrivial fibrations $X \lr C$
 with $g(C)=1$, $p_g(X)=q(X)=1$ and $K_X^2=6$, see \cite[Section 7.1]{Pol07};
\item there exist examples of relatively minimal isotrivial
fibrations with $g(C)=1$, $p_g(X)=q(X)=1$, $K_S^2=3$ and $K_S$
ample, see \cite[Section 5.5]{MiPol08}. The fibres have genus $3$
and there is a unique singular fibre, composed of four $(-3)$ curves
intersecting in one single point.
\end{itemize}
This concludes the proof of Theorem \ref{arbitrary-iso}.
\end{proof}

\begin{remark}
If $K_X$ is not ample, then both cases $K_X^2=8 \chi(\mO_X)-3$ and
$K_X^2=8 \chi(\mO_X)-4$  actually occur. For instance, there are
examples of relatively minimal isotrivial fibrations with $g(C)=1$,
$p_g(X)=q(X)=1$ and $K_X^2=5,4$, see \cite{MiPol08} and
\cite{Pol07}.
\end{remark}

We end this section with an open problem.
\begin{problem}
What happens if one drops the assumptions $q(S) \geq 1$ in
Proposition $\ref{improve-serrano}$ and $g(C) \geq 1$ in Theorem
$\ref{arbitrary-iso}?$
\end{problem}

%\newpage

\section*{Appendix A. The classification of $(-1)$-fibres for
low values of $\mg$} For low values of $\mg$ there exists a complete
classification of cyclic groups acting in genus $\mg$ with rational
quotient; by Corollary \ref{class -1 fibres} this provides in turn a
complete classification of the corresponding $(-1)$-fibres. Since
Corollary \ref{g=0} settles the case $\mg=0$, we may assume $\mg
\geq 1$. If $F$ is any $(-1)$-fibre of $\alpha_2 \colon S \lr
C_2/G$, we denote by $F_{\textrm{min}}:=\pi(F)$ the image of $F$ in
the relative minimal model $\widehat{S}$.

\subsection{The case $\mg=1$}
\begin{proposition} \label{g=1}
There are precisely three types of $(-1)$-fibres $F$ in genus $\mg=1$.
The type of $F$, the values of $\cF$ and $\delta(F)$ and the type of
$F_{\emph{min}}$ in the Kodaira
classification of elliptic singular fibres are as in the table
below.
\begin{equation*}
\begin{tabular}{c|c|c|c}
\hline
$\emph{Type of }F$ & $\cF$ & $\delta(F)$ & $\emph{Type of }F_{\emph{min}}$ \\
 \hline
$\big( \frac{1}{3}, \, \frac{1}{3}, \, \frac{1}{3} \big)$ & $1$
& $2+\frac{2}{3}$ & $IV(\widetilde{A}_2)$ \\
$\big( \frac{1}{2}, \, \frac{1}{4}, \, \frac{1}{4} \big)$ & $2$ &
$2$ &
$III(\widetilde{A}_1)$ \\
$\big( \frac{1}{2}, \, \frac{1}{3}, \, \frac{1}{6} \big)$ & $3$ &
$1+\frac{1}{3}$ & $II$ \\ \hline
\end{tabular}
\end{equation*}
\end{proposition}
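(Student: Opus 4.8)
The plan is to invoke Corollary \ref{class -1 fibres}, by which classifying $(-1)$-fibres in genus $\mg=1$ is the same as classifying pairs $(G,\mathcal{S})$ with $G$ a cyclic group acting in genus $1$ with rational quotient and signature $(0\,|\,n_1,\ldots,n_r)$, and $\mathcal{S}=\{q_1/n_1,\ldots,q_r/n_r\}$ satisfying $\sum_{i=1}^r q_i/n_i=1$. As $G$ is cyclic and generated by the local monodromies, which have orders $n_1,\ldots,n_r$, one has $|G|=\mathrm{l.c.m.}(n_1,\ldots,n_r)=\rho$; hence, by Proposition \ref{(-1)-fibre} applied to the $G$-cover $C_1\to C_1/G\cong\mathbb{P}^1$, the $(-1)$-fibre condition forces $\rho\big(-2+\sum_{i=1}^r(1-1/n_i)\big)=0$, i.e. $\sum_{i=1}^r(1-1/n_i)=2$. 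First I would list the solutions of this equation with $n_i\ge 2$ — a finite, classical list — namely the four signatures $(0\,|\,2,2,2,2)$, $(0\,|\,3,3,3)$, $(0\,|\,2,4,4)$, $(0\,|\,2,3,6)$, each realised by a cyclic group (with $\rho=|G|$ the l.c.m.\ of the branching numbers), as is immediate to check via Proposition \ref{riemann ext}.

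Next I would impose $\sum_{i=1}^r q_i/n_i=1$. For $(0\,|\,2,2,2,2)$ the sum equals $2$, so this signature contributes nothing. For each of the other three signatures the constraints $1\le q_i\le n_i-1$, $(q_i,n_i)=1$ and $\sum q_i/n_i=1$ leave a unique admissible choice of the $q_i$; for instance for $(0\,|\,2,3,6)$ one needs $2q_2+q_3=3$ with $q_2\in\{1,2\}$, $q_3\in\{1,5\}$, forcing $q_2=q_3=1$. This yields exactly the three types $\big(\tfrac13,\tfrac13,\tfrac13\big)$, $\big(\tfrac12,\tfrac14,\tfrac14\big)$, $\big(\tfrac12,\tfrac13,\tfrac16\big)$. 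That each of them is genuinely attained follows from Theorem \ref{existence} with $\mathfrak{q}=0$: in all three cases $\mathrm{l.c.m.}$ of the denominators times $\big(-2+\sum(1-1/n_i)\big)$ equals $0$, so the curve $C_1$ produced by that construction has genus $1$. Since the three signatures give pairwise distinct types, this proves the ``precisely three types'' assertion.

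Then I would read off $\cF$ and $\delta(F)$ from the material of Section \ref{sec:relatively minimal}. All the strings occurring here are irreducible ($\tfrac{n_i}{1}=[n_i]$), so $b_1(\mE_i)=n_i$. For $\big(\tfrac13,\tfrac13,\tfrac13\big)$ we have $b_1(\mE_i)=3\ge 3$ for all $i$, whence $\cF=1$ by Lemma \ref{cardinality} and $\delta(F)=2+\tfrac23$ by Proposition \ref{cF=1}. The other two are the exceptional cases of Proposition \ref{delta-magg-0}: $\big(\tfrac12,\tfrac14,\tfrac14\big)$ is Case $1$ with $k_1=1$, giving $\cF=k_1+1=2$ (by \eqref{A1}) and $\delta(F)=2$, while $\big(\tfrac12,\tfrac13,\tfrac16\big)$ is Case $2$ with $k_1=1$, giving $\cF=k_1+2=3$ (by \eqref{A2}) and $\delta(F)=1+\tfrac13$. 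As a check one may recompute these values from $\delta(F)=\tfrac13\sum_i B(\mE_i)-\cF$ using $B(\tfrac1c)=c+\tfrac2c$.

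Finally I would identify $F_{\mathrm{min}}=\pi(F)$ by running the contraction algorithm of Subsection \ref{sub:contractible-comp} explicitly and tracking self-intersections and intersection multiplicities through the successive blow-downs. For $\big(\tfrac13,\tfrac13,\tfrac13\big)$, contracting the central $(-1)$-curve $Y$ converts the three $(-3)$-curves into three $(-2)$-curves through a single point: Kodaira type $IV$. For $\big(\tfrac12,\tfrac14,\tfrac14\big)$, after contracting $Y$ and then the $(-1)$-curve produced by the $\tfrac12$-string one is left with two $(-2)$-curves mutually tangent at a point: Kodaira type $III$. For $\big(\tfrac12,\tfrac13,\tfrac16\big)$, contracting $Y$ and then the two $(-1)$-curves arising successively from the $\tfrac12$- and $\tfrac13$-strings collapses everything to a single rational curve of self-intersection $0$ that has acquired a cusp: Kodaira type $II$; consistently, the general fibre of $\hat\alpha_2\colon\widehat{S}\lr C_2/G$ has genus $\mg=g(C_1)=1$. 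The hard part is this last step: the bookkeeping of intersection multiplicities, and in particular observing that in the $\big(\tfrac12,\tfrac13,\tfrac16\big)$ case the relevant multiplicity becomes $2$, so the final blow-down creates a cusp rather than a node; everything else is either classical or an immediate consequence of Section \ref{sec:relatively minimal}.
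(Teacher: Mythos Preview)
Your proof is correct and follows essentially the same route as the paper's: list the cyclic groups acting in genus $1$ with rational quotient, impose $\sum q_i/n_i=1$ to isolate the three admissible types, compute $\cF$ and $\delta(F)$, and identify $F_{\mathrm{min}}$ by tracking the blow-downs. The only differences are presentational: the paper quotes the list of signatures from \cite{Bre00} rather than re-deriving it from Riemann--Hurwitz, and it computes $\cF$ and $\delta(F)$ directly from the definition (via $\delta(F)=\tfrac13\sum B(\mE_i)-\cF$) instead of invoking Lemma~\ref{cardinality}, Proposition~\ref{cF=1}, and equations \eqref{A1}--\eqref{A2}.
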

\begin{proof}
The cyclic groups $G$ acting in genus $1$ with rational quotient and
the corresponding signatures are as follows (\cite[p. 9]{Bre00}):
\begin{itemize}
\item[$(i)$] $G=\mathbb{Z}_2, \quad (0 \;| \; 2^4);$
\item[$(ii)$] $G=\mathbb{Z}_3, \quad (0 \, | \, 3^3);$
\item[$(iii)$] $G=\mathbb{Z}_4, \quad (0 \, | \, 2, 4^2);$
\item[$(iv)$] $G=\mathbb{Z}_6, \quad (0 \, | \, 2,3,6)$.
\end{itemize}
In case $(i)$ we cannot have a $(-1)$-fibre. \\
In case $(ii)$ a $(-1)$-fibre $F$ is necessarily of type $\big(
\frac{1}{3}, \, \frac{1}{3}, \, \frac{1}{3} \big)$; $F_{\textrm{min}}$
 is obtained by contracting only the central component, hence
 $\cF=1$ and $\delta(F)=B(\frac{1}{3})-1=2+\frac{2}{3}$. \\
In case $(iii)$ a $(-1)$-fibre is necessarily of type $\big(
\frac{1}{2}, \, \frac{1}{4}, \, \frac{1}{4} \big)$; $F_{\textrm{min}}$ is obtained by performing two blow-downs, hence
$\cF=2$ and $\delta(F)=\frac{1}{3}(B(\frac{1}{2})+ 2 B(\frac{1}{4}))-2=2$. \\
In case $(iv)$ a $(-1)$-fibre is necessarily of type $\big(
\frac{1}{2}, \, \frac{1}{3}, \, \frac{1}{6} \big)$; $F_{\textrm{min}}$
is  obtained by performing three blow-downs, hence $\cF=3$
and  $\delta(F)=\frac{1}{3}(B(\frac{1}{2})+ B(\frac{1}{3}) +
B(\frac{1}{6}))-3=1+\frac{1}{3}$. \\
In each case the blow-down process and the type of
$F_{\textrm{min}}$ are illustrated in Figure \ref{figure-minimal}.
This completes the proof.
\begin{figure}[H]
\begin{center}
\includegraphics*[totalheight=15 cm]{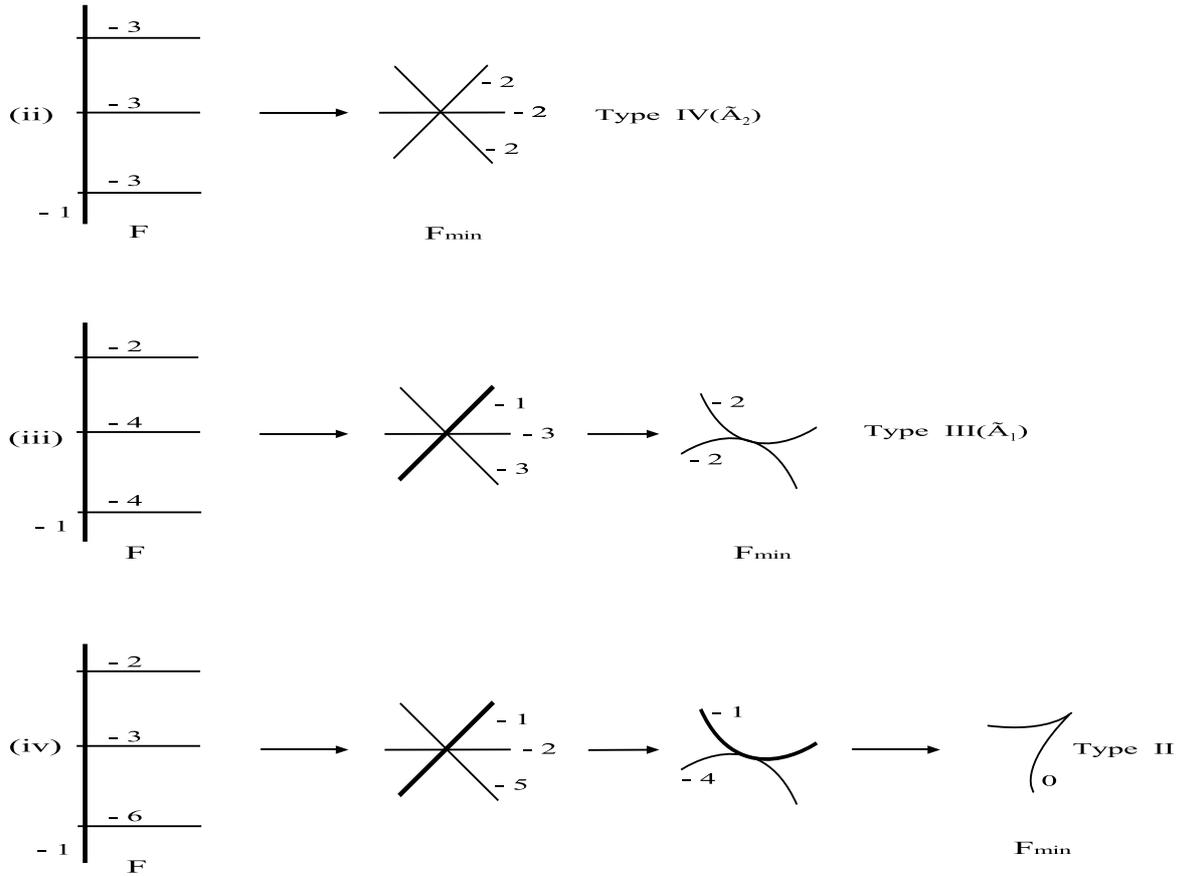}
\end{center}
\caption{$(-1)$-fibres and their minimal models in genus $1$}
\label{figure-minimal}
\end{figure}
\end{proof}

\begin{remark} \label{serrano-elliptic}
Proposition \emph{\ref{g=1}} generalizes Serrano's example of a
nonstandard elliptic isotrivial fibration having a singular fibre of
type $II$ $($see \cite[Proposition 2.5]{Se90}$)$. A strictly related
result, namely the existence of isotrivial elliptic fibrations $f
\colon X \lr \mathscr{D}$ over an open disk having the central fibre
of type
 $IV(\widetilde{A}_2)$, $III(\widetilde{A}_1)$ or $II$, appears
 in \cite[Chapter V, p. 137-138]{BPV}.
\end{remark}

\subsection{The cases $\mg=2$ and $\mg=3$}

Ogg classified in \cite{Ogg66} all singular fibres that may occur in
pencils of genus $2$ curves; in particular he showed that they are
either irreducible or belong to $44$ reducible types. In the
following proposition we classify all $(-1)$ fibres $F$ in genus $2$
and we give the corresponding type of $F_{\textrm{min}}$ according
to Ogg's classification.

\begin{proposition} \label{g=2}
There are precisely six types of $(-1)$-fibres $F$ in genus $\mg=2$. The
type of $F$, the values of $\cF$ and $\delta(F)$ and the type of
$F_{\emph{min}}$ are as in the table below.
\begin{equation*}
\begin{tabular}{c|c|c|c}
\hline
$\emph{Type of }F$ & $\cF$ & $\delta(F)$ & $\emph{Type of }F_{\emph{min}}$  \\
\hline $\big(\frac{1}{5}, \, \frac{1}{5}, \, \frac{3}{5} \big)$ &
 $2$ & $3+\frac{3}{5}$ & $\emph{Type }36$ \\
$\big(\frac{1}{5}, \, \frac{2}{5}, \, \frac{2}{5} \big)$ & $1$ &
$4+\frac{4}{5}$ & $\emph{Type }8$\\
$\big(\frac{2}{3}, \, \frac{1}{6}, \, \frac{1}{6} \big)$ & $3$ &
$3$ & $\emph{Type }34$ \\
$\big(\frac{1}{2}, \, \frac{1}{8}, \, \frac{3}{8} \big)$ & $3$ &
$3$ & $\emph{Type }1$\\
$\big(\frac{1}{2}, \, \frac{1}{5}, \, \frac{3}{10} \big)$ & $2$ &
$3+\frac{4}{5}$  & $\emph{Type }16$\\
$\big(\frac{1}{2}, \, \frac{2}{5}, \, \frac{1}{10} \big)$ & $4$ &
$2+\frac{2}{5}$  & $\emph{Irreducible}$ \\ \hline
\end{tabular}
\end{equation*}
\end{proposition}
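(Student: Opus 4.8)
The plan is to invoke Corollary~\ref{class -1 fibres}, which identifies the classification of $(-1)$-fibres in genus $\mg=2$ with the classification of pairs $(G,\mathcal{S})$, where $G$ is a cyclic group acting on a genus~$2$ curve $C_1$ with $C_1/G\cong\mathbb{P}^1$ and signature $(0\,|\,n_1,\ldots,n_r)$, and $\mathcal{S}=\{\frac{q_1}{n_1},\ldots,\frac{q_r}{n_r}\}$ satisfies $(n_i,q_i)=1$ and $\sum_{i=1}^r\frac{q_i}{n_i}=1$. Only finitely many cyclic groups act in genus~$2$ (indeed $|G|\leq 4\mg+2=10$), and the complete list of the corresponding signatures is available in the literature (\cite[p.~252]{Br90}, \cite[Appendix~A]{Pol07}): up to equivalence they are $\mathbb{Z}_2$ with $(0\,|\,2^6)$, $\mathbb{Z}_3$ with $(0\,|\,3^4)$, $\mathbb{Z}_4$ with $(0\,|\,2^2,4^2)$, $\mathbb{Z}_5$ with $(0\,|\,5^3)$, $\mathbb{Z}_6$ with $(0\,|\,2^2,3^2)$ and $(0\,|\,3,6^2)$, $\mathbb{Z}_8$ with $(0\,|\,2,8^2)$, and $\mathbb{Z}_{10}$ with $(0\,|\,2,5,10)$. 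I would treat these eight possibilities one at a time.

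Since each signature prescribes the integers $n_i$, finding the admissible sets $\mathcal{S}$ is an elementary arithmetic problem: one must write $1=\sum q_i/n_i$ with $1\leq q_i<n_i$ and $(q_i,n_i)=1$. For $G=\mathbb{Z}_2,\mathbb{Z}_3,\mathbb{Z}_4$, and for $\mathbb{Z}_6$ with signature $(0\,|\,2^2,3^2)$, every admissible choice already gives $\sum q_i/n_i>1$, so no $(-1)$-fibre arises. The remaining four cases produce, respectively, the two types $\big(\frac15,\frac15,\frac35\big)$, $\big(\frac15,\frac25,\frac25\big)$ (for $\mathbb{Z}_5$), the single type $\big(\frac23,\frac16,\frac16\big)$ (for $\mathbb{Z}_6$), the single type $\big(\frac12,\frac18,\frac38\big)$ (for $\mathbb{Z}_8$), and the two types $\big(\frac12,\frac15,\frac3{10}\big)$, $\big(\frac12,\frac25,\frac1{10}\big)$ (for $\mathbb{Z}_{10}$); that these six types are genuinely realized, with $g(C_1)=2$, follows from Theorem~\ref{existence} together with Proposition~\ref{(-1)-fibre}, since $\mathrm{l.c.m.}(n_i)\big(-2+\sum_i(1-1/n_i)\big)=2$ in each case. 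One subtlety worth recording is that the numerators $q_i$ are genuinely constrained by coprimality with $n_i$, which is what rules out the ``extra'' solutions of the identity $\mathrm{l.c.m.}(n_i)\big(-2+\sum_i(1-1/n_i)\big)=2$; no further restriction comes from Corollaries~\ref{integers} or~\ref{cor:fix}, because in the model of Theorem~\ref{existence} each singularity occurs on $T$ with multiplicity $\mathrm{l.c.m.}(n_i)$, which clears all denominators.

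It remains, for each of the six types, to compute $\cF$ and $\delta(F)$ and to identify $F_{\textrm{min}}$ in Ogg's list. For $\cF$ I would write out the Hirzebruch--Jung expansions $\frac{n_i}{q_i}=[b_1,\ldots,b_{k_i}]$ (tabulated for $n_i\leq 14$ in Appendix~B) and run the blow-down algorithm of Subsection~\ref{sub:contractible-comp}; equivalently, one places $\mE_1,\mE_2$ into one of the Cases of Proposition~\ref{cases-genus-no-0} and reads $\cF$ off formulas \eqref{A1}--\eqref{A4}. Then $\delta(F)=\frac13\sum_{\mE\in\mathscr{H}(F)}B(\mE)-\cF$ with the tabulated values of $B$; for instance for $\big(\frac15,\frac15,\frac35\big)$ the HJ-strings are $[5],[5],[2,3]$, so $\sum B=\frac{27}{5}+\frac{27}{5}+6=\frac{84}{5}$, and after blowing down $Y$ and then the single resulting $(-1)$-curve one finds $\cF=2$, whence $\delta(F)=\frac{28}{5}-2=3+\frac35$, as claimed. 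Finally, carrying out the $\cF$ blow-downs explicitly yields the dual graph of $F_{\textrm{min}}$, a configuration of rational curves with prescribed self-intersections and multiplicities, which one matches against Ogg's $44$ reducible types of fibres in a pencil of genus~$2$ curves (\cite{Ogg66}); in the case $\big(\frac12,\frac25,\frac1{10}\big)$ the blow-downs exhaust the whole fibre except for one irreducible component, so $F_{\textrm{min}}$ is irreducible.

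The step I expect to be the main obstacle is the last one: matching each $F_{\textrm{min}}$ to the correct entry of Ogg's classification. This is routine but delicate bookkeeping with curve configurations and multiplicities, and the cleanest way to carry it out is to draw the blow-down process for each of the six fibres, in the spirit of Figure~\ref{figure-minimal}.
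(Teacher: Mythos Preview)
Your proposal is correct and follows essentially the same route as the paper: enumerate the cyclic groups acting in genus $2$ with rational quotient from \cite[p.~252]{Br90}, discard the signatures $(0\,|\,2^6)$, $(0\,|\,3^4)$, $(0\,|\,2^2,4^2)$, $(0\,|\,2^2,3^2)$ for which $\sum q_i/n_i>1$, list the admissible types in the remaining four cases, compute $\cF$ and $\delta(F)$ from the tabulated $B$-values, and match $F_{\textrm{min}}$ against Ogg's list. The only cosmetic differences are that you spell out the arithmetic reason for discarding the four signatures and invoke Theorem~\ref{existence} explicitly for realization, whereas the paper simply asserts these points; and the paper does the $\cF$-computation by direct blow-down rather than via the formulas \eqref{A1}--\eqref{A4}.
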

\begin{proof}
The cyclic groups $G$ acting in genus $2$ with rational quotient and
the respective signatures are as follows (\cite[p. 252]{Br90}):
\begin{itemize}
\item[$(i)$] $G=\mathbb{Z}_2, \quad (0 \;| \; 2^6);$
\item[$(ii)$] $G=\mathbb{Z}_3, \quad (0 \, | \, 3^4);$
\item[$(iii)$] $G=\mathbb{Z}_4, \quad (0 \, | \, 2^2, 4^2);$
\item[$(iv)$] $G=\mathbb{Z}_5, \quad (0 \, | \, 5^3);$
\item[$(v)$] $G=\mathbb{Z}_6, \quad (0 \, | \, 3, 6^2);$
\item[$(vi)$] $G=\mathbb{Z}_6, \quad (0 \, | \, 2^2, 3^2);$
\item[$(vii)$] $G=\mathbb{Z}_8, \quad (0 \, | \, 2, 8^2);$
\item[$(viii)$] $G=\mathbb{Z}_{10}, \quad (0 \, | \, 2,5,10)$.
\end{itemize}
In cases $(i)$, $(ii)$, $(iii)$ and $(vi)$ we cannot have any
$(-1)$-fibre. \\
In case $(iv)$, if $F$ is a $(-1)$-fibre there are two possibilities:
\\
$(iv_a)$ $F$ is of type $\big( \frac{1}{5}, \frac{1}{5}, \frac{3}{5}
\big)$; we have $\cF=2 $ and
$\delta(F)=\frac{1}{3}(2B(\frac{1}{5})+B(\frac{3}{5}))-2=3+\frac{3}{5}$;
\\
$(iv_b)$ $F$ is of type $\big( \frac{1}{5}, \frac{2}{5}, \frac{2}{5}
\big)$; we have $\cF=1$, hence
$\delta(F)=\frac{1}{3}(B(\frac{1}{5})+2B(\frac{2}{5}))-1=4+\frac{4}{5}$.
\\
In case $(v)$ a $(-1)$-fibre is necessarily of type $\big(
\frac{2}{3}, \frac{1}{6}, \frac{1}{6} \big)$; we have $\cF=3$,
 hence
$\delta(F)=\frac{1}{3}(B(\frac{2}{3})+2B(\frac{1}{6}))-3=3$. \\
In case $(vii)$ a $(-1)$-fibre is necessarily of type $\big(
\frac{1}{2}, \frac{1}{8}, \frac{3}{8} \big)$; we have $\cF=3$,
 hence
$\delta(F)=\frac{1}{3}(B(\frac{1}{2})+B(\frac{1}{8})+B(\frac{3}{8}))-3=3$. \\
In case $(viii)$ there are again two possibilities: \\
$(viii_a)$ $F$ is of type $\big( \frac{1}{2}, \frac{1}{5},
\frac{3}{10} \big)$; we have $\cF=2$, hence
$\delta(F)=\frac{1}{3}(B(\frac{1}{2})+B(\frac{1}{5})+
B(\frac{3}{10}))-2=3+\frac{4}{5}$; \\
$(viii_b)$ $F$ is of type $\big( \frac{1}{2}, \frac{2}{5},
\frac{1}{10} \big)$; we have $\cF=4$, hence
$\delta(F)=\frac{1}{3}(B(\frac{1}{2})+B(\frac{2}{5})+
B(\frac{1}{10}))-4=2+\frac{2}{5}$. \\ \\
By looking at the classification of singular fibres in \cite{Ogg66},
one sees that the types of $F_{\textrm{min}}$ are precisely those in
our table and this completes the proof.
\end{proof}

In the same way, we can give the following list of $(-1)$-fibres in genus $3$.

\begin{proposition} \label{g=3}
There are precisely $17$ types of $(-1)$-fibres $F$ in genus $\mg=3$.
The type of $F$ and the corresponding values of $\cF$ and
 $\delta(F)$ are as in the table below.
\begin{equation*}
\begin{tabular}{c|c|c}
\hline
$\emph{Type of }F$ & $\cF$ & $\delta(F)$  \\
\hline $\big(\frac{1}{4}, \, \frac{1}{4}, \, \frac{1}{4}, \,
\frac{1}{4} \big)$ & $1$ &
$5$ \\
$\big( \frac{1}{7}, \, \frac{1}{7}, \, \frac{5}{7} \big)$ & $3$ & $4 + \frac{4}{7}$ \\
$\big( \frac{1}{7}, \, \frac{2}{7}, \, \frac{4}{7} \big)$ & $2$ & $5$ \\
$\big( \frac{1}{7}, \, \frac{3}{7}, \, \frac{3}{7} \big)$ & $1$ & $6 + \frac{6}{7} $ \\
$\big( \frac{2}{7}, \, \frac{2}{7}, \, \frac{3}{7} \big)$ & $1$ & $6+ \frac{2}{7} $ \\
$\big( \frac{1}{4}, \, \frac{1}{8}, \, \frac{5}{8} \big)$ & $2$ & $5$ \\
$\big( \frac{1}{4}, \, \frac{3}{8}, \, \frac{3}{8} \big)$ & $1$ & $5$ \\
$\big( \frac{3}{4}, \, \frac{1}{8}, \, \frac{1}{8} \big)$ & $4$ & $4$ \\
$\big( \frac{1}{3}, \, \frac{1}{9}, \, \frac{5}{9} \big)$ & $3$ & $3+\frac{8}{9} $ \\
$\big( \frac{1}{3}, \, \frac{2}{9}, \, \frac{4}{9} \big)$ & $1$ & $6+\frac{2}{9} $ \\
$\big( \frac{2}{3}, \, \frac{1}{9}, \, \frac{2}{9} \big)$ & $3$ & $4+\frac{4}{9} $ \\
$\big( \frac{1}{2}, \, \frac{1}{12}, \, \frac{5}{12} \big)$ & $4$ & $4$ \\
$\big( \frac{1}{3}, \, \frac{1}{4}, \, \frac{5}{12} \big)$ & $1$ & $4+\frac{2}{3} $ \\
$\big( \frac{2}{3}, \, \frac{1}{4}, \, \frac{1}{12} \big)$ & $4$ &  $3+ \frac{1}{3} $ \\
$\big( \frac{1}{2}, \, \frac{1}{7}, \, \frac{5}{14} \big)$ & $3$ & $3+ \frac{2}{7} $ \\
$\big( \frac{1}{2}, \, \frac{2}{7}, \, \frac{3}{14} \big)$ & $2$ & $4 + \frac{1}{7} $ \\
$\big( \frac{1}{2}, \, \frac{3}{7}, \, \frac{1}{14} \big)$ & $5$ & $3+ \frac{3}{7} $ \\
\hline
\end{tabular}
\end{equation*}
\end{proposition}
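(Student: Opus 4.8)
The plan is to proceed as in the proofs of Propositions \ref{g=1} and \ref{g=2}. By Corollary \ref{class -1 fibres}, classifying the $(-1)$-fibres $F$ in genus $\mg=3$ amounts to classifying the pairs $(G,\mathcal{S})$, with $G$ a finite cyclic group acting in genus $3$ with rational quotient and signature $(0\,|\,n_1,\ldots,n_r)$, and $\mathcal{S}=\{q_1/n_1,\ldots,q_r/n_r\}$ a set of rationals with $(n_i,q_i)=1$ and $\sum_{i=1}^r q_i/n_i=1$. So the first step is to list all such cyclic actions; this is Broughton's table \cite[p. 252]{Br90}. Concretely, the Riemann--Hurwitz relation \eqref{riemanhur} with $\mg=3$, $\mg'=0$ reads $4=|G|\big(r-2-\sum_{i}1/n_i\big)$, which together with $r\geq 3$ (Lemma \ref{no-contr-3}) and the inequality $\sum_i 1/n_i\leq 1$ forced by $\sum_i q_i/n_i=1$ gives $4\geq |G|(r-3)$; hence $r=3$ whenever $|G|\geq 5$, and $r=4$ survives only for $(0\,|\,4^4)$, which forces $G=\mZ_4$ and all $q_i=1$. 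A finite search over the remaining signatures then shows that the groups which can carry a $(-1)$-fibre in genus $3$ are $\mZ_4$, $\mZ_7$, $\mZ_8$, $\mZ_9$, $\mZ_{12}$ and $\mZ_{14}$, and enumerating for each of their admissible signatures the sets $\mathcal{S}$ with $\sum_i q_i/n_i=1$ yields precisely the $17$ types in the table. Realizability of each listed type is ensured by Theorem \ref{existence}.

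For each of the $17$ types it remains to compute $\cF$ and $\delta(F)$. The value $\delta(F)$ follows from its definition $\delta(F)=\tfrac13\sum_{i=1}^r B(\mE_i)-\cF$ by reading off the numbers $B\big(\tfrac{q_i}{n_i}\big)$ from the table of Appendix B once $\cF$ is known. To get $\cF$ one runs the contraction algorithm of Subsection \ref{sub:contractible-comp}: one first contracts the central component $Y$ of $F$, which is the unique $(-1)$-curve to begin with since $Y^2=-\sum_i q_i/n_i=-1$ by Proposition \ref{selfintersection}, and then iterates the blow-downs along the strings $\mE_i$, tracking self-intersections via the Hirzebruch--Jung expansions of the $\tfrac{n_i}{q_i}$. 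By Lemma \ref{cardinality}(i) one has $\cF=1$ exactly when $b_1(\mE_i)\geq 3$ for every $i$; in that case, e.g. for $F$ of type $\big(\tfrac14,\tfrac14,\tfrac14,\tfrac14\big)$, one gets $\delta(F)=\tfrac13\cdot 4\cdot B\big(\tfrac14\big)-1=\tfrac13\cdot 4\cdot\tfrac92-1=5$. The remaining cases are handled exactly as the six cases of Proposition \ref{g=2}, by drawing the dual graph and counting successive blow-downs, the continued-fraction identities of Propositions \ref{prop-fraz-cont-1} and \ref{prop-fraz-cont-2} together with the case analysis of Proposition \ref{cases-genus-no-0} making the bookkeeping systematic.

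The main difficulty is combinatorial rather than conceptual: one must ensure that Broughton's list of cyclic actions in genus $3$ is complete and that the rotation numbers $q_i$ occurring for each signature are exactly those admitted by some generating vector, and one must carry out the $17$ Hirzebruch--Jung resolutions without arithmetic slips. Several internal consistency checks are available: $K_S^2$ and $e(S)$ computed from Proposition \ref{invariants-S} must be integral (equivalently $\tfrac13\sum_i B(\mE_i)$ is consistent with Corollary \ref{chi-S}); all the resulting values satisfy $\delta(F)>2$, in agreement with Proposition \ref{delta-magg-0}, which moreover guarantees that no exception of type $(i)$ or $(ii)$ of Corollary \ref{fibres} occurs for $\mg=3$; and the fibres $F_{\mathrm{min}}$ one obtains can be matched against the classifications of singular fibres of genus-$3$ pencils in the literature as a final sanity check.
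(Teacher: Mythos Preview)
Your proposal is correct and follows essentially the same approach as the paper: list the cyclic groups acting in genus $3$ with rational quotient via Broughton's classification, discard the signatures that admit no solution to $\sum q_i/n_i=1$, enumerate the remaining types, and compute $\cF$ and $\delta(F)$ for each by running the contraction algorithm and reading off $B(q_i/n_i)$ from Appendix B. The paper's own proof is in fact terser than yours, explicitly listing the twelve signatures $(i)$--$(xii)$ (on \cite[p.~254--255]{Br90}, not p.~252 as you write) and then leaving the verification to the reader; your additional remarks on the Riemann--Hurwitz bound for $r$, on realizability via Theorem \ref{existence}, and on the consistency check $\delta(F)>2$ from Proposition \ref{delta-magg-0} are all sound but go slightly beyond what the paper records.
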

\begin{proof}
The cyclic groups $G$ acting in genus $3$ with rational quotient and
the corresponding signatures are as follows (\cite[p.
254-255]{Br90}):
\begin{itemize}
\item[$(i)$] $G=\mathbb{Z}_2, \quad (0 \;| \; 2^8);$
\item[$(ii)$] $G=\mathbb{Z}_3, \quad (0 \, | \, 3^5);$
\item[$(iii)$] $G=\mathbb{Z}_4, \quad (0 \, | \, 4^4);$
\item[$(iv)$] $G=\mathbb{Z}_4, \quad (0 \, | \, 2^3, 4^2);$
\item[$(v)$] $G=\mathbb{Z}_6, \quad (0 \, | \, 2^2, 6^2);$
\item[$(vi)$] $G=\mathbb{Z}_6, \quad (0 \, | \, 2, 3^2, 6);$
\item[$(vii)$] $G=\mathbb{Z}_7, \quad (0 \, | \, 7^3);$
\item[$(viii)$] $G=\mathbb{Z}_8, \quad (0 \, | \, 4, 8^2);$
\item[$(ix)$] $G=\mathbb{Z}_9, \quad (0 \, | \, 3, 9^2);$
\item[$(x)$] $G=\mathbb{Z}_{12}, \quad (0 \, | \, 2, 12^2);$
\item[$(xi)$] $G=\mathbb{Z}_{12}, \quad (0 \, | \, 3, 4, 12);$
\item[$(xii)$] $G=\mathbb{Z}_{14}, \quad (0 \, | \, 2, 7, 14)$.
\end{itemize}
In cases $(i)$, $(ii)$, $(iv)$, $(v)$ and $(vi)$ we cannot have any
$(-1)$-fibre, whereas the remaining possibilities give
the occurrences in the table. The details are as in the proof of
Proposition
 \ref{g=2} and they are left to the reader, who can check them by
 using the table in Appendix B.\\
\end{proof}

\newpage

\section*{Appendix B. List of cyclic quotient singularities
$x=\frac{1}{n}(1,q)$ with  $2
\leq n \leq 14$.}

\label{tabella}
$$
\begin{tabular}{|c|c|c|c|c|}

\ssi{n}{q}{n/q = [b_1, \dots ,
b_s]}{q^{\prime}}{B\big(\frac{q}{n} \big)}{h \big( \frac{q}{n} \big)}

 & & & &  \\

\ssi{2}{1}{[ 2 ]}{1}{3}{0}

\ssi{3}{1}{[ 3 ]}{1}{3+2/3}{-1/3}

\ssi{3}{2}{[ 2, 2 ]}{2}{5+1/3}{0}

\ssi{4}{1}{[ 4 ]}{1}{4+1/2}{-1}

\ssi{4}{3}{[ 2, 2, 2 ]}{3}{7+1/2}{0}

\ssi{5}{1}{[ 5 ]}{1}{5+2/5}{-9/5}

\ssi{5}{2}{[ 3, 2 ]}{3}{6}{-2/5}

\ssi{5}{4}{[ 2, 2, 2, 2 ]}{4}{9+3/5}{0}

\ssi{6}{1}{[ 6 ]}{1}{6+1/3}{-8/3}

\ssi{6}{5}{[ 2, 2, 2, 2, 2 ]}{5}{11+2/3}{0}

\ssi{7}{1}{[ 7 ]}{1}{7+2/7}{-25/7}

\ssi{7}{2}{[ 4, 2 ]}{4}{6+6/7}{-8/7}

\ssi{7}{3}{[ 3, 2, 2 ]}{5}{8+1/7}{-3/7}

\ssi{7}{6}{[ 2, 2, 2, 2, 2, 2 ]}{6}{13+5/7}{0}

\ssi{8}{1}{[ 8 ]}{1}{8+1/4}{-9/2}

\ssi{8}{3}{[ 3, 3 ]}{3}{6+3/4}{-1}

\ssi{8}{5}{[ 2, 3, 2 ]}{5}{8+1/4}{-1/2}

\ssi{8}{7}{[ 2, 2, 2, 2, 2, 2, 2 ]}{7}{15+3/4}{0}

\ssi{9}{1}{[ 9 ]}{1}{9+2/9}{-49/9}

\ssi{9}{2}{[ 5, 2 ]}{5}{7+7/9}{-2}

\ssi{9}{4}{[ 3, 2, 2, 2 ]}{7}{10+2/9}{-4/9}

\ssi{9}{8}{[ 2, 2, 2, 2, 2, 2, 2, 2 ]}{8}{17+7/9}{0}

\ssi{10}{1}{[ 10 ]}{1}{10+1/5}{-32/5}

\ssi{10}{3}{[ 4, 2, 2 ]}{7}{9}{-6/5}

\ssi{11}{1}{[ 11 ]}{1}{11+2/11}{-81/11}

\ssi{11}{2}{[ 6, 2 ]}{6}{8+8/11}{-32/11}

\ssi{11}{3}{[ 4, 3 ]}{4}{7+7/11}{-20/11}

\ssi{11}{5}{[ 3, 2, 2, 2, 2 ]}{9}{12+3/11}{-5/11}

\ssi{11}{7}{[ 2, 3, 2, 2 ]}{8}{10+4/11}{-6/11}

\ssi{12}{1}{[ 12 ]}{1}{12+1/6}{-25/3}

\ssi{12}{5}{[ 3, 2, 3 ]}{5}{8+5/6}{-1}

\ssi{12}{7}{[ 2, 4, 2 ]}{7}{9+1/6}{-4/3}

\ssi{13}{1}{[ 13 ]}{1}{13+2/13}{-121/13}

\ssi{13}{2}{[ 7, 2 ]}{7}{9+9/13}{-50/13}

\ssi{13}{3}{[ 5, 2, 2 ]}{9}{9+12/13}{-27/13}

\ssi{13}{4}{[ 4, 2, 2, 2 ]}{10}{11+1/13}{-16/13}

\ssi{13}{5}{[ 3, 3, 2 ]}{8}{9}{-15/13}

\ssi{13}{6}{[ 3, 2, 2, 2, 2, 2 ]}{11}{14+4/13}{-6/13}

\ssi{14}{1}{[ 14 ]}{1}{14+1/7}{-72/7}

\ssi{14}{3}{[ 5, 3 ]}{5}{8+4/7}{-19/7}

\ssi{14}{9}{[ 2, 3, 2, 2, 2 ]}{11}{12+3/7}{-4/7}

%\ssi{15}{1}{[ 15 ]}{1}{15+2/15}{-169/15}

%\ssi{15}{2}{[ 8, 2 ]}{8}{10+2/3}{-24/5}

%\ssi{15}{4}{[ 4, 4 ]}{4}{8+8/15}{-8/3}

%\ssi{15}{7}{[ 3, 2, 2, 2, 2, 2, 2 ]}{13}{16+1/3}{-7/15}

%\ssi{15}{11}{[ 2, 2, 3, 2, 2 ]}{11}{12+7/15}{-3/5}

\hline \hline
\end{tabular}
$$

 \newpage


\begin{thebibliography}{999999}

%\bibitem[ACGH85]{ACGH}
%E. Arbarello, M. Cornalba, P. A. Griffiths and J. Harris:
%\emph{Geometry of algebraic curves}, Springer-Verlag 1985.

%\bibitem[Ac94]{Ac94}
%R. Accola: \emph{Topics in the theory of the Riemann surfaces},
%Lecture Notes in Mathematics $\boldsymbol{1595}$ (1994).

%\bibitem[At56]{At56}
%M. F. Atiyah: On the Krull-Schmidt theorem with applications to
%sheaves, \emph{Bull. Soc. Math. France} $\boldsymbol{84}$ (1956),
%307-317.

%\bibitem[At57]{At57}
%M. F. Atiyah: Vector bundles over an elliptic curve, \emph{Proc. London %Math. Soc.} $\boldsymbol{7}$ (1957), 414-452.

%\bibitem[BaCa97]{BaCa97}
%I. Bauer, F. Catanese: Generic lemniscates of algebraic functions,
%\emph{Math. Ann.} $\boldsymbol{307}$ (1997), 417-444.

%\bibitem[BaCa03]{BaCa03}
%I. Bauer, F. Catanese: Some new surfaces with $p_g=q=0$, Proceedings
%of the Fano Conference (Torino, 2002).

\bibitem[BaCaGr08]{BaCaGr08}
I. Bauer, F. Catanese, F. Grunewald: The classification of surfaces
with $p_g=q=0$ isogenous to a product of curves, \emph{Pure Appl.
Math. Q.} $\boldsymbol{4}$ (2008),  no. 2, part 1, 547--5861.

\bibitem[BaCaGrPi08]{BaCaGrPi08}
I. Bauer, F. Catanese, F. Grunewald, R. Pignatelli: Quotient of a
product of curves by a finite group and their fundamental groups,
e-print $\mathbf{arXiv:0809.3420}$ (2008).

%\bibitem[BaCaPi06]{BaCaPi06}
%I. Bauer, F. Catanese, R. Pignatelli: Complex surfaces of general type: some recent
%progress, e-print $\mathbf{math.AG/0602477}$ (2006), to appear in \emph{Global methods in
%complex geometry}, Springer-Verlag.

%\bibitem[BaDC99]{BaDC99}
%F. Bardelli, A. Del Centina: Bielliptic curves of genus three: canonical models and moduli spaces, \emph{Indag. Mathem.}, N.S., $\boldsymbol{10 (2)}$ (1999), 183-190.

\bibitem[BPV84]{BPV}
W. Barth, C. Peters, A. Van de Ven: \emph{Compact Complex Surfaces},
Springer-Verlag 1984.

\bibitem[Bar99]{Bar99}
R. Barlow: Zero-cycles on Mumford's surface, \emph{Math. Proc. Camb.
Phil. Soc.} $\boldsymbol{126}$ (1999), 505-510.

%\bibitem[Bear83]{Bear83}
%A. F. Beardon: \emph{The geometry of discrete groups}. Graduate
%Texts in Mathematics $\boldsymbol{91}$, Springer-Verlag, New York,
%1983.

%\bibitem[Be82]{Be1}
%A. Beauville: L'inegalit{\'e} $p_g \geq 2q-4$ pour les surfaces de
%type g{\'e}n{\'e}rale, \emph{Bull. Soc. Math. de France}
%$\boldsymbol{110}$ (1982), 343-346.

%\bibitem[Be88]{Be88}
%A. Beauville: Annulation du $H^1$ et syst{\`e}mes paracanoniques sur
%le surfaces, \emph{J. reine angew. Math.} $\boldsymbol{388}$
%$(1988)$, 149-157.

\bibitem[Be96]{Be2}
A. Beauville: \emph{Complex algebraic surfaces}, Cambridge
University Press 1996.

%\bibitem[Bir74]{Bir74}
%J. Birman: \emph{Braids, links and mapping class groups}, Annals of
%Mathematics Studies $\boldsymbol{82}$, Princeton University Press
%1974.

%\bibitem[Bo73]{Bo}
%E. Bombieri: Canonical models of surfaces of general type,
%\emph{Publ. IHES} $\boldsymbol{42}$ (1973), 171-219.

%\bibitem[Bor03]{Bor03}
%G. Borrelli: On the classification of surfaces of general type with non-birational bicanonical map and Du Val double planes, preprint.

\bibitem[Bre00]{Bre00}
T. Breuer: \emph{Characters and Automorphism groups of Compact
Riemann Surfaces},
 Cambridge University Press 2000.

\bibitem[Br90]{Br90}
S. A. Broughton: Classifying finite group actions on surfaces of low
genus, \emph{J. of Pure and Applied Algebra} $\boldsymbol{69}$
(1990), 233-270.

%\bibitem[Ca81]{Ca}
%F. Catanese: On a class of surfaces of general type, in
%\emph{Algebraic Surfaces}, CIME, Liguori (1981), 269-284.

%\bibitem[Ca84]{Ca2}
%F. Catanese: On the moduli space of surfaces of general type, \emph{Journal of Differential Geometry} $\boldsymbol{19}$ (1984), no. 2, 483-515.

%\bibitem[Ca92]{Ca92} F. Catanese: Chow varieties, Hilbert Schemes and moduli
%space of surfaces of general type, \emph{Journal of Algebraic
%Geometry}, $\boldsymbol{1,4}$ (1992), 561-595.

%\bibitem[Ca99]{Ca99}
%F. Catanese: Singular bidouble covers and the construction of
%interesting algebraic surfaces, \emph{Contemporary Mathematics}
%$\boldsymbol{241}$ (1999), 97-119.

\bibitem[Ca00]{Ca00}
F. Catanese: Fibred surfaces, varieties isogenous to a product and
related moduli spaces, \emph{American Journal of Mathematics}
$\boldsymbol{122}$ (2000), 1-44.

%\bibitem[Ca03]{Ca03}
%F. Catanese: Moduli spaces of surfaces and real structures,
%\emph{Annals of Mathematics} $\boldsymbol{158}$, n.2 (2003),
%577-592.

%\bibitem[Ca05]{Ca05}
%F. Catanese: Trecce, mapping class groups, fibrazioni di Lefschetz e
%applicazioni al diffeomorfismo di superfici algebriche,
%$\mathbf{AG/0405300v2}$ (2005).

%\bibitem[CaCi90]{CaCi3}
%F. Catanese and C. Ciliberto: On the irregularity of the cyclic
%coverings of algebraic varieties, \emph{Geometry of Complex
%Projective Varieties (Cetraro 1990)}, A. Lanteri, M. Palleschi, D.
%C. Struppa eds., Mediterranean Press (1993), 89-116.

%\bibitem[CaCi91]{CaCi91}
%F. Catanese and C. Ciliberto: Surfaces with $p_g=q=1$,
%\emph{Symposia Math.} $\boldsymbol{32}$ (1991), 49-79.

%\bibitem[CaCi93]{CaCi93}
%F. Catanese and C. Ciliberto: Symmetric product of elliptic curves
%and surfaces of general type with $p_g=q=1$, \emph{Journal of
%Algebraic Geometry} $\boldsymbol{2}$ (1993), 389-411.

%\bibitem[CattMu04]{CattMu04}
%A. Cattabriga, M. Mulazzani: (1,1)-knots via the mapping class group
%of the twice punctured torus, \emph{Adv. Geom.} $\boldsymbol{4}$
%(2004), 263-277.

%\bibitem[CaPi05]{CaPi05}
%F. Catanese, R. Pignatelli: Fibrations of low genus I, \emph{Ann.
%Sci. École Norm. Sup.} (4) $\boldsymbol{39}$ (2006),  no. 6,
%1011--1049.

\bibitem[CarPol07]{CarPol07}
G. Carnovale, F. Polizzi: The classification of surfaces of
general type with $p_g=q=1$ isogenous to a product,  \emph{Adv.
Geom.} $\boldsymbol{9}$ (2009), 233-256.

%\bibitem[CCM98]{CCM98}
%F. Catanese, C. Ciliberto and M. M. Lopes: Of the classification
%of irregular surfaces of general type with non birational
%bicanonical map, \emph{Trans. Amer. Math. Soc.}
%$\boldsymbol{350}$ (1998), 275-308.

%\bibitem[CD89]{CD89}
%F. Catanese and O. Debarre: Surfaces with $K_S^2=2, p_g=1, q=0$,
%\emph{J. Reine-Angew.-Math.} $\boldsymbol{395}$ (1989), 1-55.

\bibitem[CCPW]{CCPW}
J. H. Conway, R. T. Curtis, R. A. Parker, R. A. Wilson: \emph{Atlas of finite groups},
Oxford University Press 1985.

%\bibitem[Ch96]{Ch96}
%M. C. Chang: The number of components of Hilbert schemes,
%\emph{International Journal of Mathemathics} $\boldsymbol{7,3}$
%(1996), 301-306.

%\bibitem[CM02]{CM02}
%C. Ciliberto, M. Mendes Lopes: On surfaces with $p_g=q=2$ and
%non-birational bicanonical map, \emph{Adv. Geom.} $\boldsymbol{2}$
%(2002), no. $3$, 281-300.

%\bibitem[Ci97]{Ci}
%C. Ciliberto: The bicanonical map for surfaces of general type,
%\emph{Proc. of Symp. in Pure Mathematics} $\boldsymbol{62.1}$
%(1997), 57-84.

%\bibitem[Com11]{C}
%A. Comessatti: Sui piani tripli ciclici irregolari, \emph{Rend.
%Circ. Mat. Palermo} $\boldsymbol{31}$ (1911), 369-386.

%\bibitem[Con39]{Con}
%F. Conforto: \emph{Le superficie razionali}, Zanichelli, Bologna,
%1939.

\bibitem[Deb81]{Deb81}
O. Debarre: Inegalit{\'es} num{\'e}riques pour les surfaces de type
g{\'e}n{\'e}rale, \emph{Bull. Soc. Math. de France}
 $\boldsymbol{110}$ (1982), 319-346.

%\bibitem[DMP02]{DPM02}
%I. Dolgachev, M. Mendes Lopes, R. Pardini: Rational surfaces with many nodes, \emph{Compositio Math.} $\boldsymbol{132}$ (2002), no. 3, 349-363.

%\bibitem[DV52]{DV}
%P. Du Val: On surfaces whose canonical system is hyperelliptic, \emph{Canadian J. of Math.} $\boldsymbol{4}$ (1952), 204-221.

%\bibitem[E77]{E77}
%C. J. Earle: \emph{Teichm{\"u}ller Theory}, in Discrete groups and
%automorphic forms, Academic Press (1977), 143-162.

\bibitem[FK92]{FK92}
H. M. Farkas, I. Kra: \emph{Riemann Surfaces}, Graduate Texts in
Mathematics $\boldsymbol{71}$, $2^{nd}$ Edition, Springer-Verlag
1992.

%\bibitem[FP97]{FP97}
%B. Fantechi, R. Pardini: Automorphisms and moduli spaces of varieties with ample canonical class via deformations of abelian covers, \emph{Comm. Algebra}  $\boldsymbol{25}$ (1997),  no. 5, 1413--1441.

%\bibitem[Fr91]{Fr}
%P. Francia: On the base points of the bicanonical system, \emph{Symposia Math.} $\boldsymbol{32}$ (1991), 141-150.

\bibitem[Fre71]{Fre71}
E. Freitag: Uber die Struktur der Funktionenk$\ddot{\textrm{o}}$rper
zu hyperabelschen Gruppen I, \emph{J. Reine. Angew. Math.}
$\boldsymbol{247}$ (1971), 97-117.

%\bibitem[FN03]{FN03}
%P. Frediani, F. Neumann: Etale homotopy types of moduli stacks of
%algebraic curves with symmetries,  \emph{K-Theory} $\boldsymbol{30}$
%(2003),  no. 4, 315--340.

%\bibitem[Fu78]{Fu78}
%T. Fujita: On K{\"a}hler fibre spaces over curves, \emph{J. Math.
%Soc. Japan} $\boldsymbol{30}$ (1978), 779-794.

%\bibitem[FulHa91]{FulHa91}
%W. Fulton, J. Harris: \emph{Representation Theory}, Graduate Texts
%in Mathematics $\boldsymbol{129}$, Springer-Verlag 1991.

\bibitem[GAP4]{GAP4}
The GAP~Group, \emph{GAP -- Groups, Algorithms, and Programming,
  Version 4.4}; 2006, http://www.gap-system.org.

%\bibitem[Ha69]{Ha1}
%R. Hartshorne: Curves with high self-intersection on algebraic surfaces, \emph{Publ. Math. IHES} $\boldsymbol{36}$ (1969), 111-125.

%\bibitem[Ger01]{Ger01}
%S. Gervais: A finite presentation of the mapping class group of a
%punctured surface,  \emph{Topology}  $\boldsymbol{40}$ (2001),  no.
%4, 703--725.

%\bibitem[Gi74]{Gi74}
%J. Gilman: On the moduli of compact Riemann surfaces with a finite
%number of punctures, \emph{Annals of Mathematical Studies}
%$\boldsymbol{79}$ (1974), 181-205.

%\bibitem[GDH92]{GDH92}
%G. Gonzalez-Diez, W. J. Harvey: Moduli of Riemann surfaces with
%symmetry, in: \emph{Discrete groups and geometry (Birmingham,
%1991)}, London Math. Soc. Lecture Note Series $\boldsymbol{173}$,
%Cambridge University Press (1992), 75-93.

%\bibitem[Go31]{Go31}
%L. Godeaux: Sur une surface alg{\'e}brique de genre zero and bigenre
%deux, \emph{Atti Acad. Naz. Lincei} $\boldsymbol{14}$ (1931),
%479-481.

%\bibitem[Ha77]{Ha2}
%R. Hartshorne: \emph{Algebraic Geometry}, Graduate Texts in Mathematics
%$\boldsymbol{52}$, Springer-Verlag 1977.

\bibitem[H71]{H71} W. J. Harvey: On the branch loci in
Teichm{\"u}ller space, \emph{Trans. Amer. Mat. Soc.}
$\boldsymbol{153}$ (1971), 387-399.

%\bibitem[H77]{H77}
%W. J. Harvey: \emph{Space of discrete groups}, in Discrete groups
%and automorphic forms, Academic Press (1977), 295-348.

%\bibitem[HT80]{HT80}
%A. Hatcher, W. Thurston: A presentation for the mapping class group
%of a closed orientable surface, \emph{Topology}  $\boldsymbol{19}$ (1980)
%no. 3, 221-237.

%\bibitem[Ho77]{Ho}
%E. Horikawa: On algebraic surfaces with pencils of curves of genus
%2, in Complex Analysis and Algebraic Geometry, a volume dedicated
%to Kodaira, p.79-90, Cambridge 1977.

%\bibitem[Hol88]{Hol88}
%R. P. Holzapfel: Chern number relations for locally abelian Galois
%coverings of algebraic surfaces, \emph{Math. Nachr.}
%$\boldsymbol{138}$ (1988), 263-292.

%\bibitem[KM98]{KM98}
%J. Kollar, S. Mori: \emph{Birational geometry of algebraic varieties},
%Cambridge University Press, 1998.

%\bibitem[Hain00]{Hain00}
%R. Hain: Moduli of Riemann surfaces, transcendental aspects,
%\emph{ICTP Lecture Notes} $\boldsymbol{1}$, Trieste 2000.

%\bibitem[Hur891]{Hur891}
%A. Hurwitz: {\"U}ber Riemann'sche Flachen mit gegeben
%Verzweigungspunkten, \emph{Math. Ann.} $\boldsymbol{39}$ (1891),
%1-61.

%\bibitem[Is05]{Is05}
%H. Ishida: Bounds for the relative Euler-Poincaré characteristic of
%certain hyperelliptic fibrations, \emph{Manuscripta Math.}
%$\boldsymbol{118}$ (2005), 467-483.

%\bibitem[Is06]{Is06}
%H. Ishida: Catanese-Ciliberto surfaces of fiber genus three with
%unique singular fibre, \emph{Tohoku Math. J.} $\boldsymbol{58}$
%(2006), 33-69.

%\bibitem[Je16]{Je16}
%C. M. Jessop: \emph{Quartic surfaces with singular points},
%Cambridge University Press, 1916.

\bibitem[JS87]{JS87}
G. A. Jones, D. Singerman: \emph{Complex Functions}, Cambridge
University Press 1987.

%\bibitem[Ka92]{Ka92}
%S. Katok: \emph{Fuchsian Groups}, Chicago Lectures in Mathematics,
%1992.

%\bibitem[Kaw84]{Kaw84}
%Y. Kawamata: The come of curves of algebraic varieties, \emph{Annals
%of Math.} $\mathbf{119}$ (1984), 603-633.

%\bibitem[Ke83]{Ke83}
%S. P. Kerkhoff: The Nielsen realization problem, \emph{Annals of
%Math.} $\mathbf{117}$ (1983), 235-265.

%\bibitem[Ki03]{Ki03}
%H. Kimura: Classification of automorphism groups, up to topological equivalence, of
%compact Riemann surfaces of genus $4$, \emph{J. of Algebra} $\mathbf{264}$ (2003), 26-54.

%\bibitem[KolMo98]{KolMo98}
%J. Kollar, S. Mori: \emph{Birational geometry of Algebraic
%Varieties}, Cambridge University Press (1998).

%\bibitem[Kr59]{Kr59}
%S. Kravetz: On the geometry of Teichm{\"u}ller space and the
%structure of their modular groups, \emph{Ann. Acad Sci. Finn. Ser A,
%I} $\boldsymbol{278}$ (1959), 1-35.

%\bibitem[KuKi90]{KuKi90}
%A. Kuribayashi, H. Kimura: Automorphism groups of compact Riemann surfaces of genus five,
%\emph{J. of Algebra} $\boldsymbol{134}$
 %(1990), no. 1, 80--103.

%\bibitem[KuKu90]{KuKu90}
%I. Kuribayashi, A. Kuribayashi: Automorphism groups of compact Riemann surfaces of genera
%three and four, \emph{J. Pure Appl. Algebra} $\boldsymbol{65}$ (1990), no. 3, 277--292.

%\bibitem[Kul97]{Kul97}
%R. Kulkarni: Riemann surfaces admitting large automorphism groups.
%Extremal Riemann surfaces (San Francisco, CA, 1995) \emph{Contemp.
%Math.} $\boldsymbol{201}$ (1997), 63-79.

%\bibitem[Ku]{Ku}
%E. Kummer:  {\"U}ber die Flachen vierten Grades auf Welchen
%Schaaren von Kegelschnitte liegen, \emph{J. Reine-Angew.-Math.} $\boldsymbol{64}$ %(1863), 66-76.

\bibitem[Lau71]{Lau71}
H. B. Laufer: \emph{Normal two-dimensional singularities}, Annals of
Mathematics Studies $\boldsymbol{71}$,
 Princeton University Press 1971.

%\bibitem[Lom06]{Lom06}
%C. Lomuto: Riemann surfaces with a large abelian group of
%automorphisms, \emph{Collectanea Mathematica} $\boldsymbol{57,3}$
 %(2006), 309-318.

%\bibitem[Leh87]{Leh87}
%O. Lehto: \emph{Univalent functions and Teichmüller spaces},
%Graduate Texts in Mathematics $\boldsymbol{109}$, Springer-Verlag,
%New York, 1987.

%\bibitem[Man97]{Man97}
%M. Manetti: Iterated double covers and connected components of
%moduli spaces, \emph{Topology} $\boldsymbol{36,3}$ (1997), 745-764.

%\bibitem[MB67]{MB67}
%S. Maclane, G. Birkhoff: \emph{Algebra}, AMS Chelsea Publications
%1967.

%\bibitem[MH75]{MH75}
%C. Machlachlan and W. J. Harwey: On mapping class groups and
%Teichmüller spaces, \emph{Proc. London Math. Soc.} $\boldsymbol{30}$
%(1975), 496-512.

%\bibitem[Mi84]{Mi}
%Y. Miyaoka: The maximum number of quotient singularities on surfaces with given numerical
%invariants, \emph{Math. Ann.} $\boldsymbol{268}$ (1984), 159-171.

\bibitem[MiPol08]{MiPol08}
E. Mistretta, F. Polizzi: Standard isotrivial fibrations with
$p_g=q=1. \;II$, \emph{J. Pure Appl. Algebra} $\boldsymbol{214}$
(2010), 344-369.

%\bibitem[MP01]{MP01}
%M. Mendes Lopes, R. Pardini: The bicanonical map of surfaces with $p_g=0$ %and $K_S^2 \geq 7$, \emph{Bull. London Math. Soc.} $\boldsymbol{33}$ no. %3 (2001), 265-274.

%\bibitem[MP03]{MP03}
%M. Mendes Lopes, R. Pardini: The bicanonical map of surfaces with $p_g=0$ %and $K_S^2 \geq 7$, II. \emph{Bull. London Math. Soc.} $\boldsymbol{35}$ %no. 3 (2003), 337-343.

%\bibitem[Na60]{Na60}
%M. Nagata: On rational surfaces I, \emph{Mem. Coll. Sci., U. of Kyoto Ser %A} $\boldsymbol{32}$ (1960), 351-370.

\bibitem[NePo08]{NePo08}
A. Némethi and P. Popescu-Pampu: On the Milnor fibers of cyclic
quotient singularities, e-print $\mathbf{arXiv:0805.3449v1}$ (2008).

\bibitem[Ogg66]{Ogg66}
A. P. Ogg: On pencils of curves of genus $2$, \emph{Topology}
$\boldsymbol{5}$ (1966), 355-362.

\bibitem[OW77]{OW77}
P. Orlik, P. Wagreich: Algebraic surfaces with $k^*$-action,
\emph{Acta Math.} $\boldsymbol{138}$ (1977), 43-81


%\bibitem[Pal03]{Pal03}
%E. Palmieri: Gruppi di automorfismi di curve iperellittiche e di
%rivestimenti ciclici della retta proiettiva, Tesi di Laurea,
%Universit{\`a} di Roma ``Tor Vergata'' (2003).

%\bibitem[Par91]{Par91}
%R. Pardini: Abelian covers of algebraic varieties, \emph{J. Reine
%Angew. Math.} $\boldsymbol{417}$ (1991), 191-213.

%\bibitem[Par03]{Par03}
%R. Pardini: The classification of double planes of general type with
%$K_S^2=8$ and $p_g=0$, \emph{Journal of Algebra} $\boldsymbol{259}$
%(2003) no. 3, 95-118.

%\bibitem[Pol05]{Pol05}
%F. Polizzi: On surfaces of general type with $p_g=q=1$, $K_S^2=3$, \emph{Collect. Math.}
%$\boldsymbol{56}$, no. 2 (2005), 181-234.

%\bibitem[Pol06]{Pol06}
%F. Polizzi: Surfaces of general type with $p_g=q=1, \; K_S^2=8$ and bicanonical map of
%degree $2$, \emph{Trans. Amer. Math. Soc.} $\boldsymbol{358}$, no. 2 (2006), 759-798.

\bibitem[Pol07]{Pol07}
F. Polizzi: Standard isotrivial fibrations with $p_g=q=1$, \emph{J. Algebra}
$\boldsymbol{321}$ (2009), 1600-1631.

%\bibitem[Pol07]{Pol07}
%F. Polizzi: On surfaces of general type with $p_g=q=1$ isogenous to a product of curves,
%e-print $\mathbf{math.AG/0601063}$, to appear in \emph{Comm. in Algebra}.

%\bibitem[Pop26]{Pop26}
%P. Popescu-Pampu: The geometry of continued fractions and the
%topology of surfaces singularity, \emph{Advances Stud. in Pure
%Maths} $\boldsymbol{46}$ (2007), 119-195.

\bibitem[Rie74]{Rie74}
O. Riemenschneider: Deformationen von
quotientensingularit$\ddot{\textrm{a}}$ten (nach Zyklischen
Gruppen), \emph{Math. Ann.} $\boldsymbol{209}$, 211-248 (1974).

%\bibitem[Ri07]{Ri07}
%C. Rito: On surfaces with $p_g=q=1$ and non-ruled bicanonical
%involution,  \emph{Ann. Sc. Norm. Super. Pisa Cl. Sci} (5)  6
%(2007), no. 1, 81-102.

%\bibitem[Ri08]{Ri08}
%C. Rito: On equations of double planes with $p_g=q=1$, e-print
%$\mathbf{arXiv:0804.2227}$ (2008).

%\bibitem[Re88]{Re}
%I. Reider: Vector bundles of rank 2 and linear systems on
%algebraic surfaces, \emph{Ann. of Math.} $\boldsymbol{127}$
%(1988), 309-316.

%\bibitem[Reid91]{Reid91}
%M. Reid: Campedelli versus Godeaux, \emph{Symposia Math.} $\boldsymbol{32}$ (1991), 309-365.

%\bibitem[Schn03]{Schn03}
%L. Schneps: Special loci in moduli spaces of curves.  Galois groups
%and fundamental groups,  217--275, Math. Sci. Res. Inst. Publ.
 %$\boldsymbol{41}$, Cambridge Univ. Press, Cambridge, 2003.


\bibitem[Se90]{Se90}
F. Serrano: Fibrations on algebraic surfaces, \emph{Geometry of
Complex Projective Varieties $($Cetraro 1990$)$}, A. Lanteri, M.
Palleschi, D. C. Struppa eds., Mediterranean Press (1993), 291-300.

%\bibitem[Se90b]{Se90b}
%F. Serrano: Multiple fibres of a morphism, \emph{Comment. Math.
%Elvetici} $\boldsymbol{65}$ (1990), 287-298.

%\bibitem[Se91]{Se91}
%F. Serrano: The Picard group of a quasi-bundle, \emph{Manuscripta
%Math.} $\boldsymbol{73}$ (1991), 63-82.

\bibitem[Se92]{Se92}
F. Serrano: Elliptic surfaces with an ample divisor of genus two, \emph{Pacific J.
Math.} $\boldsymbol{152}$ (1992), 187-199.


\bibitem[Se96]{Se96}
F. Serrano: Isotrivial fibred surfaces, \emph{Annali di Matematica
pura e applicata}, vol. CLXXI (1996), 63-81.

\bibitem[Tan96]{Tan96}
S. L. Tan: On the invariant of base changes of pencils of curves, II, \emph{Math. Z.}
 $\boldsymbol{222}$ (1996), 655-676.

%\bibitem[Tod81]{Tod81}
%A. Todorov: A construction of surfaces with $p_g=1, \; q=0$ and $2
%\leq K^2 \leq 8$. Counterexamples of the global Torelli theorem,
%\emph{Invent. Math.} $\boldsymbol{63}$ (1981), 287-234.

%\bibitem[Urz07]{Urz07}
%G. Urzua: Arrangement of curves and algebraic surfaces, e-print
%$\mathbf{arXiv:0711.0765}$ (2007).

%\bibitem[Vin00]{Vin00}
%C. R. Vinroot: Symmetry and Tiling Groups for Genus $4$ and $5$, \emph{Rose-Hulman
%Institute of Technology Undergraduate Mathematics Journal} $\boldsymbol{1}$, Issue $1$
%(2000).

%\bibitem[Wi05]{Wi05}
%M. Wild: The groups of order sixteen made easy, \emph{American
%Mathematical Monthly} $\boldsymbol{112}$, Number 1 (2005), 20-31.

%\bibitem[Xi85]{Xi1}
%G. Xiao: Finitude de l' application bicanonique des surfaces de
%type g{\'e}n{\'e}rale, \emph{Boll. Soc. Math. de France}
%$\boldsymbol{113}$ (1985), 23-51.

%\bibitem[Xi85]{Xi85}
%G. Xiao: \emph{Surfaces fibr{\'e}es en courbes de genre deux},
%Lecture Notes in Mathematics $\boldsymbol{1137}$ (1985).

%\bibitem[Xi87]{Xi4}
%G. Xiao: Fibered algebraic surfaces with low slope,
%\emph{Math. Ann.} $\boldsymbol{276}$ (1987), 449-466.

%\bibitem[Xi90]{Xi2}
%G. Xiao: Degree of the bicanonical map of a surface of general
%type, \emph{Amer. J. of Math.} $\boldsymbol{112}$ $\boldsymbol{(5)}$, (1990) 309-316.

\end{thebibliography}
\end{document}